\renewcommand{\P}{\mathbb{P}}
\renewcommand{\ge}{\geqslant}
\renewcommand{\le}{\leqslant}
\renewcommand{\leq}{\leqslant}
\theoremstyle{plain}
\newtheorem{proposition}{Proposition}
\newtheorem{theorem}{Theorem}
\newtheorem{corollary}{Corollary}
\newtheorem{lemma}{Lemma}
\theoremstyle{definition}
\newtheorem{definition}{Definition}
\newtheorem{example}{Example}
\theoremstyle{remark}
\newtheorem{remark}{Remark}
\begin{document}
\title{Distance-layer structure of the De Bruijn and Kautz digraphs: analysis and application to deflection routing\\
(with examples and remarks)}
\author{J. F\`abrega, J. Mart\'{\i}-Farr\'e and X. Mu\~noz
\thanks{Partially supported by the Ministerio de Ciencia e Innovaci\'on\mbox{$/$}Agencia Estatal de Investigaci\'on, Spain, and the European Regional Development Fund under project  PGC2018-095471-B-I00; and by AGAUR from the Catalan Government under project 2017SGR--1087.} \\
Departament de Matem\`{a}tiques, Universitat Polit\`ecnica de Catalunya\\
		Barcelona, Spain}
	
\maketitle 
\begin{abstract} 
In this paper, we present a detailed study of the reach distance-layer structure of the De Bruijn and Kautz digraphs, and we apply our analysis to the performance evaluation of deflection routing in De Bruijn and Kautz networks. Concerning the distance-layer structure, we provide explicit polynomial  expressions, in terms of the degree of the digraph, for the cardinalities of  some relevant sets of this structure. Regarding the application to defection routing, and as a consequence of our polynomial description of the distance-layer structure, we formulate explicit rational expressions, in terms of the degree of the digraph, for some probabilities of interest in the analysis of this type of routing.

De Bruijn and Kautz digraphs are fundamental examples of digraphs on alphabet and iterated line digraphs. If the topology of the network under consideration corresponds to a digraph of this type, we can perform, in principle, a similar vertex layer description.
\end{abstract}

\section{Introduction}
\label{intro}

Deflection routing \cite{origindeflection} is a routing scheme for bufferless networks based on the fact that if a packet cannot be sent through a given link due to congestion, it is deflected through any other available link (instead of being buffered in the node queue), and the packet is then rerouted to destination. This kind of routing is nowadays interesting in the context of optical networks \cite{haeri2015,NlMu, zheng2015} and on-chip networks \cite{CheTaMi, KuJaSl}. However, its efficiency depends highly on the network topology (as well as on the decision criteria used to deflect packets when collisions appear \cite{fm}). More precisely, the routing efficiency will be determined by how much the distance to the destination increases when a deflection occurs. This question is addressed by considering some probabilities, as studied in Subsection~\ref{application}. Because of this reason, the efficiency in networks with unidirectional links may be worse than in the bidirectional case. Nevertheless, in many cases, directed networks are convenient \cite{KuJaSl, SheZhaGu}.

Despite being known for a long time, active research is still going on on De Bruijn and Kautz digraphs $B(d,D)$ and $K(d,D)$ \cite{kautz,debruijn,kautz-2,kautz-3}, both in graph theory \cite{BoDaHu, DoShMi, GriKaSte, Lichi} and in network engineering \cite{FaMo, MaEtYa, SheLi, ZeGyBl}. This paper is concerned with deflection routing in these kind of networks. 

In order to study the topological properties of $B(d,D)$ and $K(d,D)$ that we need to evaluate the performance of deflection routing, we provide a detailed study of its reach distance-layer structure. We give explicit polynomial  expressions, in terms of the degree of the digraph, for the cardinalities of  some relevant sets of this structure. For instance, if $S_{i}^\star(v)$ denotes the set of vertices at distance $i$ from a given vertex $v$, we show that $|S_{i}^\star(v)|=d^i-a_{i-1}d^{i-1}-\cdots -a_{1} d-a_{0}$,  where the coefficients $a_{k}$ are $0$ or $1$, and are explicitly determined from the sequence representation of $v$. Moreover, if $w$ is a vertex adjacent from $v$, we demonstrate that there are at most two integers $j$ such that the intersection $S_{i}^\star(v) \cap S_j^{\star}(w)$ is nonempty; we show how to determine such values of $j$; and we relate the polynomial description of $|S_{i}^\star(v) \cap S_j^{\star}(w)|$ with that of $|S_{i}^\star(v)|$. 

We apply our results on the distance-layer structure to provide explicit rational expressions, in terms of the degree $d$, of some probabilities of interest in the performance evaluation of deflection routing in $B(d,D)$ and $K(d,D)$.  Moreover, the polynomial description of the distance-layer structure is interesting by itself from a graph theoretical approach, and it can be helpful in other applications of De Bruijn and Kautz digraphs to networks or other engineering fields.

The paper is organized as follows. In Section~\ref{results} we present our results on the distance-layer structure of the set of vertices of $B(d,D)$ and $K(d,D)$ (Subsections~\ref{distance-layer} and \ref{sub: layer}), and on deflection routing (Subsection~\ref{application}). The proofs concerning the distance-layer structure are given in Section~\ref{proofs} (from Subsection~\ref{proof-2.3} to Subsection~\ref{proof propo 2.9-v2-cas 3}). In order to develop all these proofs, we need a collection of technical lemmas (Subsection~\ref{technical-lemmas}) that allow us to understand the distance-layer structure deeply. Finally, in Section~\ref{proofs-2} we prove the results presented in Subsection~\ref{application} on input and transition probabilities in deflection routing (Subsections~\ref{proof p_in}, \ref{proof p_ij}, and \ref{proof p_iD}). To do this, we need several additional technical lemmas on the distance-layer structure (Subsection~\ref{technical-lemmas-defl}). 

An extended abstract of a preliminary version of our work appeared in \cite{endam}.

\setcounter{subsection}{0}
\section{Our results}
\label{results}
Concerning the distance-layer structure of the set of vertices of the De Bruijn and Kautz digraphs we formulate some polynomial expressions (in terms of the degree $d$ of the digraph) for the cardinalities of some relevant sets of this structure. More precisely, let $S_{i}^\star(v)$ be the set of vertices at distance $i$ from a given vertex $v$. We show that $|S_{i}^\star(v)|=d^i-a_{i-1}d^{i-1}-\cdots -a_{1} d-a_{0}$, and  the coefficients $a_{k}\in\{0,1\}$ are explicitly  calculated. Moreover, given $v$, we show that for all vertex $w$ there exists at most one integer $j\ge i$ such that the intersection $S_{i}^\star(v)\cap S_{j}^{\ast}(w)$ is nonempty; and in the case that $w$ is adjacent from $v$, we provide a precise characterization of when $S_{i}^\star(v)\cap S_{j}^{\ast}(w)\ne\emptyset$. Furthermore, if $w$ is adjacent from $v$, we prove that if $S_{i}^\star(v) \cap S_{i-1}^{\ast}(w)\ne\emptyset$, then $|S_{i}^\star(v) \cap S_{i-1}^{\ast}(w)|= d^{i-1}-b_{i-2}d^{i-2}-\ldots -b_{1} d-b_{0}$, and that if $S_{i}^\star(v)\cap S_{j}^{\ast}(w)\neq \emptyset$, then $|S_{i}^\star(v) \cap S_{j}^{\ast}(w)|=d^i-\alpha_{i-1}d^{i-1}-\ldots -\alpha_{1} d-\alpha_{0}$, where the coefficients of these polynomial expressions, $b_{k},\alpha_k\in\{0,1\}$, $0\le k\le i-2$, and $\alpha_{i-1}\in\{0,1,2\}$, are determined from the coefficients $a_k$ of the polynomial expression of $|S_{i}^\star(v)|$.


\subsection{The distance-layer structure of $B(d,D)$ and $K(d,D)$}
\label{distance-layer}
This subsection and the following one are devoted to presenting our results on the characterization of the distance-layer structure of $B(d,D)$ and $K(d,D)$. We will prove our propositions and theorems in Section~\ref{proofs}. In order to elaborate these proofs, we present in Subsection~\ref{technical-lemmas}  several lemmas and remarks that allow us to understand the distance-layer structure comprehensively.

We make use of the well-known sequence representation of the vertices of $B(d,D)$ and $K(d,D)$. Each vertex of the De Bruijn digraph $B(d,D)$ corresponds to a sequence $v=v_1 v_2\cdots v_D$ such that each  element $v_k$ belongs to a base alphabet $A$ of $d$ symbols, and vertex $v$ is adjacent to the $d$ vertices $w=v_2\cdots v_D v_{D+1}$, where $v_{D+1}\in A$. Analogously, each vertex of the Kautz digraph $K(d,D)$ corresponds to a sequence $v=v_1 v_2\cdots v_D$, where now $v_{k}\ne v_{k+1}$, $1\le k< D$, and the base alphabet $A$ has $d+1$ symbols. In $K(d,D)$, vertex $v$ is adjacent to the $d$ vertices $w=v_2\cdots v_D v_{D+1}$, where $v_{D+1}\in A$ and $v_{D+1}\ne v_D$.  The digraphs $B(d,D)$ and $K(d,D)$ are $d$-regular, $d\ge 2$, have diameter $D$, and number of vertices $d^D$ and $d^D+d^{D-1}$, respectively. 

Notice that if $v=v_1v_2\cdots v_iv_{i+1}\cdots v_D$ is the sequence representation of a vertex $v$, then the sequence representation of a generic vertex $u$ for which there exists a walk from $v$ to $u$ of length $i$, $0\le i\le D-1$, is $u=v_{i+1}\cdots v_D \ast\cdots\ast$, where the subsequence  $\ast\cdots\ast$ means that the last $i$ symbols of $u$ can be arbitrarily chosen (in the case $G=K(d,D)$, two consecutive symbols must be different). It is easily checked that between any pair of vertices there exists a walk of length $D$ in $B(d,D)$ and of length $D+1$ in $K(d,D)$. It is also a well-known fact that in $B(d,D)$ and $K(d,D)$ the shortest path between any two vertices is unique. Indeed,  let $v$ and $z$ be distinct vertices  with a sequence representation $v=v_1v_2\cdots v_D$ and $z=z_1z_2\cdots z_D$, respectively. Then, the distance from $v$ to $z$ is $k$ if and only if  $k$ is the smallest integer such that $v=v_1\cdots v_{k}z_1\cdots z_{D-k}$; that is to say, $k$ is the smallest integer such that the last $D-k$ symbols of the sequence representation of $v$ coincide with the first $D-k$ symbols of the sequence representation of $z$. Moreover, if $k\ge 2$, then the shortest path from $v$ to $z$ is $v, u_1,\ldots, u_{k-1}, z$, where the sequence representation of the intermediate vertex $u_i$ is $u_i=v_{i+1}\cdots v_{k}z_1\cdots z_{D-k+i}$, $1\le i\le k-1$.

From now on let $G$ be the digraph under consideration (either $G=B(d,D)$ or $G=K(d,D)$) and  let $V$ denote its vertex set. 

Given $v\in V$,  for $i\ge 0$, let $S_i(v)$ be the set of vertices for which there exists a walk from $v$ of length $i$,  and  let $S_{i}^\star(v)$ denote the set of vertices  at distance $i$ from $v$. From the definition it is clear that $S_0(v)=\{v\}$;  $S_1(v)$ is the set of vertices adjacent from $v$, usually denoted as $\Gamma^+(v)$; $S_{i}^\star(v)=\emptyset$ for $i\ge D+1$; and 
\begin{equation}
\label{layer-previ}
S_{i}^\star(v)=S_i(v)\setminus \left (\bigcup_{k=0}^{i-1}S_{k}(v) \right ) \textrm{ for } 0\le i\le D.
\end{equation}
Moreover, since in $B(d,D)$ there exists a walk of length $D$ between any pair of vertices, if $G=B(d,D)$ and $i\ge D$, then $S_i(v)=V$, and so $|S_i(v)|=d^D$. Analogously, if $G=K(d,D)$ and  $i\ge D+1$, then $S_i(v)=V$ and $|S_i(v)|=d^D+d^{D-1}$, because in $K(d,D)$ there is a walk of length $D+1$ between any pair of vertices. 

To illustrate the sets of vertices $S_i(v)$ and $S_i^\star(v)$ we next consider examples of both De Bruijn and Kautz digraphs. In these examples the cardinality of $S_i^\star(v)$ for some value of $i$ and some vertex $v$ is computed. We observe that these cardinalities have a polynomial expression in terms of the degree $d$ of $G$. One of the purposes of the next section is to point out that these polynomial expressions always exist and to provide a method to compute them.

\begin{example}
\label{example B(d,7)}
Consider the De Bruijn digraph $G=B(d,7)$ and let $v\in V$ be a vertex with sequence representation $v=\alpha\beta\beta\alpha\beta\alpha\beta$, where $\alpha$ and $\beta$ are distinct elements of the symbol alphabet $A$. Let us determine the number of vertices, $|S_6^\star(v)|$, at distance $6$ from such a vertex $v$.

The sets $S_i(v)$ can be described as follows:
\begin{equation}
\label{layer-1}
\begin{array}{l}
S_0(v)=\{u\in V:\, u=\alpha\beta\beta\alpha\beta\alpha\beta\},\\[1mm]
S_1(v)=\{u\in V:\, u=\beta\beta\alpha\beta\alpha\beta\ast\},\\[1mm]
S_2(v)=\{u\in V:\, u=\beta\alpha\beta\alpha\beta\ast\ast\},\\[1mm]
S_3(v)=\{u\in V:\, u=\alpha\beta\alpha\beta\ast\ast\ast\},\\[1mm]
S_4(v)=\{u\in V:\, u=\beta\alpha\beta\ast\ast\ast\ast\},\\[1mm]
S_5(v)=\{u\in V:\, u=\alpha\beta\ast\ast\ast\ast\ast\},\\[1mm]
S_6(v)=\{u\in V:\, u=\beta\ast\ast\ast\ast\ast\ast\},
\end{array}
\end{equation}
where the symbol $\ast$ stands for an arbitrary element of the symbol alphabet $A$. We realize from the sequence representation of the vertices in (\ref{layer-1}) that if $k=1,2,4$, then $S_k(v)\subseteq S_6(v)$; while if $k=0,3,5$, then $S_k(v)\cap S_6(v)=\emptyset$. Therefore we deduce from (\ref{layer-previ}) that the  set of vertices at distance $6$ from $v$ is $S_6^\star(v)=S_6(v)\setminus\left(S_1(v)\cup S_2(v)\cup S_4(v)\right)$. Furthermore, observe that $S_2(v)\subseteq S_4(v)$ and that $S_1(v)\cap S_4(v)=\emptyset$; so we have $S_6^\star(v)=S_6(v)\setminus\left(S_1(v)\cup S_4(v)\right)$. Moreover, $|S_i(v)|=d^i$ if $i\le 6$, because we have $d$ possible choices for each symbol $\ast$. So we conclude that $|S_6^\star(v)|=d^6-d^4-d$. 
\end{example}

\begin{example}
\label{example K(d,10)}
In this second example we consider the Kautz digraph $G=K(d,10)$ and let us calculate $|S_8^\star(v)|$, being $v$  a vertex with sequence representation  $v=\alpha\beta\gamma\alpha\beta\gamma\alpha\beta\alpha\beta$, where $\alpha$, $\beta$ and $\gamma$ stand for different elements of the alphabet $A$. As in Example~\ref{layer-1}, the sets $S_i(v)$ can be described as follows:
\begin{equation}
\label{layer-2}
\begin{array}{l}
S_0(v)=\{u\in V:\, u=\alpha\beta\gamma\alpha\beta\gamma\alpha\beta\alpha\beta\},\\[1mm]
S_1(v)=\{u\in V:\, u=\beta\gamma\alpha\beta\gamma\alpha\beta\alpha\beta\ast\},\\[1mm]
S_2(v)=\{u\in V:\, u=\gamma\alpha\beta\gamma\alpha\beta\alpha\beta\ast\ast\},\\[1mm]
S_3(v)=\{u\in V:\, u=\alpha\beta\gamma\alpha\beta\alpha\beta\ast\ast\ast\},\\[1mm]
S_4(v)=\{u\in V:\, u=\beta\gamma\alpha\beta\alpha\beta\ast\ast\ast\ast\},\\[1mm]
S_5(v)=\{u\in V:\, u=\gamma\alpha\beta\alpha\beta\ast\ast\ast\ast\ast\},\\[1mm]
S_6(v)=\{u\in V:\, u=\alpha\beta\alpha\beta\ast\ast\ast\ast\ast\ast\},\\[1mm]
S_7(v)=\{u\in V:\, u=\beta\alpha\beta\ast\ast\ast\ast\ast\ast\ast\},\\[1mm]
S_8(v)=\{u\in V:\, u=\alpha\beta\ast\ast\ast\ast\ast\ast\ast\ast\}.
\end{array}
\end{equation}
(Remember that, since $G$ is a Kautz digraph, two successive symbols in the above sequence representations must be different.) We can verify that if $k=0,3,6$, then $S_k(v)\subseteq S_8(v)$; while if $k=1,2,4,5,7$, then $S_k(v)\cap S_8(v)=\emptyset$. In consequence, from (\ref{layer-previ}) we get that the set of vertices at distance $8$ from $v$ is $S_8^\star(v)=S_8(v)\setminus\left(S_0(v)\cup S_3(v)\cup S_6(v)\right)$; and, since  $S_0(v)\cap S_3(v)= S_0(v)\cap S_6(v)= S_3(v)\cap S_6(v)=\emptyset$, the expression of $S_8^\star(v)$ cannot be further simplified. Besides, as in the previous example, we have  $|S_i(v)|=d^i$ if $i\le 8$ (notice that, despite the fact that two consecutive symbols in the sequence representation of a vertex must be different, in (\ref{layer-2}) we have again $d$ possible choices for each symbol $\ast$, because the symbol alphabet $A$ has cardinality $d+1$). Therefore the number of vertices at distance $8$ from  $v$ has the polynomial expression  $|S_8^\star(v)|=d^8-d^6-d^3-1$. 
\end{example}

%

\subsection{Polynomial description of the distance-layer structure}
\label{sub: layer}
The first goal of this subsection is to present a  polynomial description of the cardinality of the set $S_{i}^\star(v)$, where the polynomial has degree $i$,   variable $d$ (the degree of the digraph), and coefficients $0$ or $1$.  In order to obtain this description, we introduce the following definition. 
\begin{definition}
\label{def_S_kj}
Given $v\in V$ and two integers $k, i$ such that $0\le i\le D$ and $0\leq k\leq i$, let 
$$
S_{k,i}(v)= \left\{\begin{array}{ll}
S_{k}(v), & \textrm{if } S_{k}(v)\subseteq S_{i}(v) \textrm{  and for all } j,\ k<j<i,\\[1mm]
&\, \textrm{such that }S_{j}(v)\subseteq S_{i}(v)  \textrm{ we have } S_{k}(v)\not \subseteq S_{j}(v);\\[2mm]
\emptyset,&   \textrm{otherwise. }
\end{array}\right.
$$
\end{definition}

\begin{remark}
\label{rem_def_S_kj}
It follows from Definition~\ref{def_S_kj} that $S_{i,i}(v)=S_i(v)$ for any $v\in V$. Moreover, if $G=K(d,D)$, then $S_{i-1,i}(v)=\emptyset$ for any $v\in V$, because $S_{i-1}(v)\not\subseteq S_{i}(v)$. In the case $G=B(d,D)$ there are vertices $v$ such that $S_{i-1,i}(v)=S_{i-1}(v)$ and vertices $v$ for which $S_{i-1,i}(v)=\emptyset$.
\end{remark}

\begin{proposition}
\label{propo 2.3}
Let $v\in V$. Then, 
$$|S_{i}^\star(v)|=d^i-a_{i-1}d^{i-1}-\cdots -a_{1} d-a_{0},$$  
where the coefficients $a_{k}$ are $0$ or $1$, and $a_{k}=1$ if and only if $S_{k,i}(v)\neq\emptyset$. In particular, if $v=v_1v_2\cdots v_D$, then $a_{i-1}=1$ if and only if $G=B(d,D)$ and $v_i=v_{i+1}=\cdots=v_D$.
\end{proposition}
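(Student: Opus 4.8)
The plan is to feed the set‑subtraction formula \eqref{layer-previ} through two reductions: first throw away the layers $S_k(v)$ that do not meet $S_i(v)$, then reorganize the remaining ones into a disjoint union indexed exactly by the nonempty sets $S_{k,i}(v)$. Step one is to record the basic dichotomy that for $0\le k<m\le D$ one has \emph{either} $S_k(v)\subseteq S_m(v)$ \emph{or} $S_k(v)\cap S_m(v)=\emptyset$; this is immediate from the sequence description recalled before \eqref{layer-previ}, since for $m<D$ a vertex lies in $S_m(v)$ precisely when its first $D-m$ symbols spell $v_{m+1}\cdots v_D$, so the fixed prefix $v_{k+1}\cdots v_{k+D-m}$ shared by all vertices of $S_k(v)$ is either equal to $v_{m+1}\cdots v_D$ (forcing $S_k(v)\subseteq S_m(v)$, and recording the prefix criterion $S_k(v)\subseteq S_m(v)\iff v_{k+t}=v_{m+t}$ for $1\le t\le D-m$) or not (forcing disjointness); the boundary value $m=D$ is checked directly, using that $S_D(v)$ equals $V$ for $B(d,D)$ and the set of vertices not beginning with $v_D$ for $K(d,D)$. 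In particular $S_k(v)\cap S_i(v)\ne\emptyset$ forces $S_k(v)\subseteq S_i(v)$, so \eqref{layer-previ} becomes $S_i^\star(v)=S_i(v)\setminus\bigcup_{0\le k<i,\,S_k(v)\subseteq S_i(v)}S_k(v)$.

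\emph{Step 2: the nonempty $S_{k,i}(v)$ cover and partition the subtracted set.} Writing $K=\{k:0\le k<i,\ S_k(v)\subseteq S_i(v)\}$, I would show that $\bigcup_{k\in K}S_k(v)$ is the \emph{disjoint} union of those $S_{k,i}(v)$, $k\in K$, that are nonempty. For covering: given $k\in K$, choose $m^\star$ maximal among the indices $m$ with $k\le m<i$, $S_m(v)\subseteq S_i(v)$ and $S_k(v)\subseteq S_m(v)$ (the value $m=k$ qualifies, so $m^\star$ exists, and $m^\star<i$); if some $j$ with $m^\star<j<i$ and $S_j(v)\subseteq S_i(v)$ satisfied $S_{m^\star}(v)\subseteq S_j(v)$, then $S_k(v)\subseteq S_j(v)$ would contradict the choice of $m^\star$, so by Definition~\ref{def_S_kj} we get $S_{m^\star,i}(v)=S_{m^\star}(v)\ne\emptyset$ and $S_k(v)\subseteq S_{m^\star,i}(v)$. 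For disjointness: if $k<m$ with $S_{k,i}(v)$ and $S_{m,i}(v)$ both nonempty, then $S_m(v)\subseteq S_i(v)$, so Definition~\ref{def_S_kj} applied to $S_{k,i}(v)$ with $j=m$ yields $S_k(v)\not\subseteq S_m(v)$, whence the dichotomy of Step 1 gives $S_k(v)\cap S_m(v)=\emptyset$.

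\emph{Step 3: counting and the leading coefficient.} Since every free symbol in the sequence representation of a vertex of $S_k(v)$ admits exactly $d$ choices — also in the Kautz case, as noted in Examples~\ref{example B(d,7)} and \ref{example K(d,10)} — we have $|S_k(v)|=d^k$ for $0\le k\le D$. Combining this with Step 2 gives $\bigl|\bigcup_{k\in K}S_k(v)\bigr|=\sum_{k<i,\,S_{k,i}(v)\ne\emptyset}d^k$, hence $|S_i^\star(v)|=d^i-\sum_{k=0}^{i-1}a_kd^k$ with $a_k=1$ exactly when $S_{k,i}(v)\ne\emptyset$, which is the claimed polynomial expression. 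For the last assertion, $a_{i-1}=1$ iff $S_{i-1,i}(v)\ne\emptyset$; by Remark~\ref{rem_def_S_kj} this never holds for $K(d,D)$, while for $B(d,D)$ the maximality clause in Definition~\ref{def_S_kj} is vacuous, so $a_{i-1}=1$ iff $S_{i-1}(v)\subseteq S_i(v)$, which by the prefix criterion of Step 1 (with $k=i-1$, $m=i$) says exactly $v_i=v_{i+1}=\cdots=v_D$.

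\emph{Where the difficulty lies.} Steps 1 and 3 are routine inspections of sequence representations, and the final "in particular'' claim drops out of Remark~\ref{rem_def_S_kj} plus the prefix criterion. The real content is Step 2: verifying that the "maximal'' layers $S_{k,i}(v)$ \emph{simultaneously} exhaust the subtracted union and are pairwise disjoint. The covering half rests on an extremal choice of index together with the dichotomy, the disjointness half on the dichotomy together with the precise wording of Definition~\ref{def_S_kj}; getting these two arguments to mesh, and making sure the auxiliary index $m^\star$ never leaves the range $0\le m^\star<i$, is the only place where care is genuinely needed — and is presumably where the technical lemmas of Subsection~\ref{technical-lemmas} are deployed.
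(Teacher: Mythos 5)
Your proposal is correct and follows essentially the same route as the paper: the dichotomy ``$S_k(v)\subseteq S_m(v)$ or $S_k(v)\cap S_m(v)=\emptyset$'' is the paper's Lemma~\ref{tech-2}, your Step~2 is exactly the content of Lemma~\ref{tech-4} together with Remark~\ref{void} (your explicit maximal-index argument for the covering half is the same device the paper uses in Lemmas~\ref{tech-5} and \ref{tech-6}, and usefully fills in a step that the paper's proof of Lemma~\ref{tech-4} leaves terse), and Step~3 matches the paper's use of Lemma~\ref{tech-1} and Remark~\ref{rem_S_kj-seq}. No gaps.
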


The proof of this result is given in Subsection~\ref{proof-2.3}. To illustrate the use of Proposition~\ref{propo 2.3} let us recalculate the cardinalities computed in Examples~\ref{example B(d,7)} and \ref{example K(d,10)}.

 \begin{example}
 \label{both} 
 On one hand, if $G=B(d,7)$,  $v=\alpha\beta\beta\alpha\beta\alpha\beta$, and $k<6$, then $S_k(v)\subseteq S_6(v)$ if and only if $k=1,2,4$. Hence $S_{1,6}(v)=S_1(v)$, because $S_1(v)\not\subseteq S_j(v)$ if $1<j<6$; $S_{2,6}(v)=\emptyset$, because $S_2(v)\subseteq S_4(v)$; and $S_{4,6}(v)=S_4(v)$, because $S_4(v)\not\subseteq S_5(v)$. Therefore we have $a_1=a_4=1$ and $a_2=a_3=a_5=0$, and hence $|S_6^\star(v)|=d^6-d^4-d$. 

 On the other hand, if $G=K(d,10)$,  $v=\alpha\beta\gamma\alpha\beta\gamma\alpha\beta\alpha\beta$, and $k<8$, then the only subsets $S_{k,8}(v)$ which are nonempty are $S_{0,8}(v)=S_0(v)$, $S_{3,8}(v)=S_3(v)$, and $S_{6,8}(v)=S_6(v)$. So we conclude that $|S_8^\star(v)|=d^8-d^6-d^3-1$. 
 \end{example}

 In the following example we compute all the cardinalities $|S_i^\star(v)|$ in $K(d,4)$.

 \begin{example}
 \label{example K(d,4)} 
 In $K(d,4)$ we can distinguish five different types of vertices according to the structure of its sequence representation. Indeed,  let us consider the partition $V=\bigcup_{r=1}^5 \mathcal{V}_r$ defined by $\mathcal{V}_1=\{v\in V: v=\alpha\beta\alpha\beta\}$, $\mathcal{V}_2=\{v\in V: v=\alpha\beta\alpha\gamma\}$, $\mathcal{V}_3=\{v\in V: v=\alpha\beta\gamma\alpha\}$, $\mathcal{V}_4=\{v\in V: v=\alpha\beta\gamma\beta\}$, and $\mathcal{V}_5=\{v\in V: v=\alpha\beta\gamma\delta\}$, where the symbols $\alpha$, $\beta$, $\gamma$, and $\delta$ are different (so, we assume $d\ge 3$). By Proposition~\ref{propo 2.3}, if $v\in\mathcal{V}_r$, then the number of vertices at distance $i$ from $v$ has a polynomial expression of the form $|S_{i}^\star(v)|=d^i-a^{(r,i)}_{i-1}d^{i-1}-\cdots -a^{(r,i)}_{1} d-a^{(r,i)}_{0}$, where $a^{(r,i)}_k$ is either $0$ or $1$, and $a^{(r,i)}_{k}=1$ if and only if $S_{k,i}(v)\neq\emptyset$. For instance, if $v=\alpha\beta\alpha\beta\in \mathcal{V}_1$, then $S_2(v)=\{u\in V: u=\alpha\beta\ast\ast\}$ and hence  the subset $S_{k,2}(v)$  is nonempty only for $k=0$. Therefore, if $v\in\mathcal{V}_1$, then $|S^\star_2(v)|= d^2-1$. Or, if $v=\alpha\beta\gamma\beta\in\mathcal{V}_4$, then $S_3(v)=\{u\in V: u=\beta\ast\ast\ast\}$ and $S_{k,3}(v)$ is nonempty only for $k=1$. Hence  $|S^\star_3(v)|=d^3-d$. In Table~\ref{table 1} we summarize the value of all these coefficients $a^{(r,i)}_k$, as well as the polynomial expressions of $|S_i^\star(v)|$, $1\le i\le 4$.
 \begin{table}[ht]
 \caption{Coefficients and cardinalities $|S_i^\star(v)|$ in $K(d,4)$}
 \label{table 1}
 {\small
 \begin{center}
 \begin{tabular}{|c|c|c|c|c|c|}
 \hline
 &$v\in\mathcal{V}_1$&$v\in\mathcal{V}_2$&$v\in\mathcal{V}_3$&$v\in\mathcal{V}_4$&$v\in\mathcal{V}_5$\\ [1mm]\hline
 &$a_0^{(1,1)}=0$&$a_0^{(2,1)}=0$&$a_0^{(3,1)}=0$&$a_0^{(4,1)}=0$&$a_0^{(5,1)}=0$ \\[2mm] 
 $|S_1^\star(v)|$&$d$&$d$&$d$&$d$&$d$\\[2mm]  \hline
  &$a_1^{(1,2)}=0$&$a_1^{(2,2)}=0$&$a_1^{(3,2)}=0$&$a_1^{(4,2)}=0$&$a_1^{(5,2)}=0$ \\[1mm] 
   &$a_0^{(1,2)}=1$&$a_0^{(2,2)}=0$&$a_0^{(3,2)}=0$&$a_0^{(4,2)}=0$&$a_0^{(5,2)}=0$ \\[2mm]  
 $|S_2^\star(v)|$&$d^2-1$&$d^2$&$d^2$&$d^2$&$d^2$  \\[2mm]  \hline
  &$a_2^{(1,3)}=0$&$a_2^{(2,3)}=0$&$a_2^{(3,3)}=0$&$a_2^{(4,3)}=0$&$a_2^{(5,3)}=0$ \\[1mm] 
   &$a_1^{(1,3)}=1$&$a_1^{(2,3)}=0$&$a_1^{(3,3)}=0$&$a_1^{(4,3)}=1$&$a_1^{(5,3)}=0$  \\[1mm] 
   &$a_0^{(1,3)}=0$&$a_0^{(2,3)}=0$&$a_0^{(3,3)}=1$&$a_0^{(4,3)}=0$&$a_0^{(5,3)}=0$ \\[2mm] 
 $|S_3^\star(v)|$&$d^3-d$&$d^3$&$d^3-1$&$d^3-d$&$d^3$ \\[2mm]  \hline
  &$a_3^{(1,4)}=0$&$a_3^{(2,4)}=0$&$a_3^{(3,4)}=0$&$a_3^{(4,4)}=0$&$a_3^{(5,4)}=0$ \\[1mm] 
   &$a_2^{(1,4)}=1$&$a_2^{(2,4)}=1$&$a_2^{(3,4)}=1$&$a_2^{(4,4)}=1$&$a_2^{(5,4)}=1$ \\[1mm] 
   &$a_1^{(1,4)}=0$&$a_1^{(2,4)}=1$&$a_1^{(3,4)}=1$&$a_1^{(4,4)}=0$&$a_1^{(5,4)}=1$  \\[1mm]
   &$a_0^{(1,4)}=0$&$a_0^{(2,4)}=1$&$a_0^{(3,4)}=0$&$a_0^{(4,4)}=1$&$a_0^{(5,4)}=1$  \\[2mm] 
 $|S_4^\star(v)|$&$d^4-d^2$&$d^4-d^2-d-1$&$d^4-d^2-d$&$d^4-d^2-1$&$d^4-d^2-d-1$  \\  \hline
 \end{tabular}
 \end{center}
 }
 \end{table}
 \end{example}

For the application to deflection routing, in addition to the polynomial description of $|S_{i}^\star(v)|$, we are also interested in the polynomial description of $|S_{i}^\star(v) \cap S_j^{\ast}(w)|$ when $w$ is a vertex adjacent from $v$. The following three propositions deal with this issue. 

Let $v\in V$ and $w\in S_1(v)$, and let $i\ge 0$. By the triangular inequality we have $S_i^\star(v)\cap S_j^\star(w)=\emptyset$ if $j<i-1$. Therefore, since $V=\bigcup_{j=0}^D S_j^\star(w)$, we conclude that
$$S_i^\star(v)=\bigcup_{j=0}^D \left(S_i^\star(v)\cap S_j^\star(w)\right)=\bigcup_{j=i-1}^D \left(S_i^\star(v)\cap S_j^\star(w)\right).$$

First, we demonstrate in Proposition~\ref{v2-first} that there are at most two integers $j\ge i-1$ such that the intersection $S_{i}^\star(v) \cap S_j^{\star}(w)$ is nonempty. After this, in Propositions~\ref{v2-second} and \ref{v2-second-cas 2} we show how to determine such values of $j$. Finally, in Theorems~\ref{propo 2.9-v2-cas 1}, \ref{propo 2.9-v2-cas 2} and \ref{propo 2.9-v2-cas 3} we relate the polynomial description of $|S_{i}^\star(v)|$ with that of $|S_{i}^\star(v) \cap S_j^{\star}(w)|$. 

We will use the following notation. If $v\in V$, then $v_{[i,j]}$ denotes the subsequence $v_iv_{i+1}\cdots v_j$ of the sequence representation $v=v_1v_2\cdots v_D$. In particular, $v_{[i,i]}=v_i$ is the $i$-th element of this sequence.

\begin{proposition}
\label{v2-first}
Let $v\in V$ and let $i\le D$. Then for all vertex $w\in V$  there exists at most one integer $j_0$, $i\le j_0\le D$, such that $S_{i}^\star(v) \cap S_{j_0}^{\star}(w)\ne \emptyset$. In particular, if $w\in S_1(v)$, then
\begin{enumerate}
\item either there exists a unique integer $j_0$, $i\le j_0\le D$, such that the intersection $S_{i}^\star(v) \cap S_{j_0}^{\star}(w)$  is nonempty, and so
$$
\left\{\begin{array}{l}
S_i^\star(v)=\left(S_{i}^\star(v) \cap S_{i-1}^{\star}(w)\right)\cup \left(S_{i}^\star(v) \cap S_{j_0}^{\star}(w)\right) \textrm{ if } S_{i}^\star(v) \cap S_{i-1}^{\star}(w)\ne\emptyset,\\[2mm]
S_i^\star(v)=S_{i}^\star(v) \cap S_{j_0}^{\star}(w) \textrm{ if } S_{i}^\star(v) \cap S_{i-1}^{\star}(w)=\emptyset;
\end{array}
\right.
$$
\item or, for all integer $j$, $i\le j\le D$, the intersection $S_{i}^\star(v) \cap S_{j}^{\star}(w)$  is empty, and so $S_i^\star(v)= S_{i}^\star(v) \cap S_{i-1}^{\star}(w)$.
\end{enumerate}
\end{proposition}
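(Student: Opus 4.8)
The plan is to argue entirely at the level of the sequence representations $v=v_1\cdots v_D$ and $w=w_1\cdots w_D$, using the facts recalled in Subsection~\ref{distance-layer}: for $0\le j\le D$ one has $z\in S_j(u)$ if and only if $u_{[j+1,D]}=z_{[1,D-j]}$ (there is a walk of length $j$ from $u$ to $z$ precisely when the last $D-j$ symbols of $u$ agree with the first $D-j$ symbols of $z$), and $d(u,z)=\min\{j:z\in S_j(u)\}$. Consequently $z\in S_i^\star(v)$ means $z_{[1,D-i]}=v_{[i+1,D]}$ together with $z\notin S_k(v)$ for every $k<i$; in particular every vertex of $S_i^\star(v)$ has its first $D-i$ symbols prescribed by $v$.

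The core observation is that, for a fixed $j$ with $i\le j\le D$, membership of a vertex $z\in S_i^\star(v)$ in $S_j(w)$ does not depend on $z$. Indeed, since $D-j\le D-i$, the length-$(D-j)$ prefix $z_{[1,D-j]}$ of $z$ coincides with the length-$(D-j)$ prefix of $z_{[1,D-i]}=v_{[i+1,D]}$, that is, $z_{[1,D-j]}=v_{[i+1,\,D-(j-i)]}$; hence $z\in S_j(w)$ if and only if $w_{[j+1,D]}=v_{[i+1,\,D-(j-i)]}$, a condition involving only $v,w,i,j$. Now take any $z,z'\in S_i^\star(v)$ with $d(w,z)\ge i$ and $d(w,z')\ge i$. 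By the previous line, for every $j$ with $i\le j\le D$ we have $z\in S_j(w)\Leftrightarrow z'\in S_j(w)$; and since $d(w,z)\ge i$ we may write $d(w,z)=\min\{j\ge i:z\in S_j(w)\}$, and likewise $d(w,z')=\min\{j\ge i:z'\in S_j(w)\}$. These two minima are taken over the same set, so $d(w,z)=d(w,z')$. Therefore all vertices of $S_i^\star(v)$ that lie at distance at least $i$ from $w$ belong to one and the same layer $S_{j_0}^\star(w)$, which proves that there is at most one integer $j_0$ with $i\le j_0\le D$ and $S_i^\star(v)\cap S_{j_0}^\star(w)\ne\emptyset$.

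For the assertion concerning $w\in S_1(v)$, suppose $w$ is adjacent from $v$ and let $z\in S_i^\star(v)$. The triangular inequality gives $i=d(v,z)\le d(v,w)+d(w,z)=1+d(w,z)$, so $d(w,z)\ge i-1$, and of course $d(w,z)\le D$. Hence every vertex of $S_i^\star(v)$ lies in some $S_j^\star(w)$ with $i-1\le j\le D$, i.e. $S_i^\star(v)=\bigcup_{j=i-1}^{D}\bigl(S_i^\star(v)\cap S_j^\star(w)\bigr)$, and by the first part at most one term with $j\ge i$ is nonempty. If such a $j_0$ exists, this yields the two displayed identities according to whether $S_i^\star(v)\cap S_{i-1}^\star(w)$ is nonempty or empty; if no such $j_0$ exists, only the $j=i-1$ term survives, which is case~(2).

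I expect the proof to be short rather than difficult; the two points needing care are purely bookkeeping. First, one must get the index shift right: the length-$(D-j)$ prefix of a vertex of $S_i^\star(v)$ is $v_{[i+1,\,D-(j-i)]}$, not $v_{[i+1,D-j]}$. Second, the reduction $d(w,z)=\min\{j\ge 0:z\in S_j(w)\}=\min\{j\ge i:z\in S_j(w)\}$ must be justified by the hypothesis $d(w,z)\ge i$ — the equivalence $z\in S_j(w)\Leftrightarrow z'\in S_j(w)$ is available only for $j\ge i$, so without that hypothesis it would not pin down the distance. The degenerate cases $i=0$ (where $S_{i-1}^\star(w)=S_{-1}^\star(w)=\emptyset$) and $S_i^\star(v)=\emptyset$ are immediate and can be dispatched in one line.
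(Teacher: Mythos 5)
Your overall strategy is sound and it differs in flavour from the paper's own argument: the paper proves ``at most one $j_0$'' by showing that if $S_i^\star(v)\cap S_j^\star(w)\ne\emptyset$ for some $j$ with $i\le j<D$, then $S_i(v)\cap S_j(w)\ne\emptyset$, hence $S_i(v)\subseteq S_j(w)$ by the containment dichotomy of Lemma~\ref{tech-2}, and then $S_{j'}^\star(w)$ is disjoint from $S_j(w)$ (hence from $S_i(v)$) for every $j'>j$. You instead prove the stronger statement that all vertices of $S_i^\star(v)$ lying at distance at least $i$ from $w$ are \emph{equidistant} from $w$, by observing that for $j\ge i$ membership of $z\in S_i^\star(v)$ in $S_j(w)$ is decided by a prefix of $z$ that is itself prescribed by $v$. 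Both arguments ultimately rest on the same prefix bookkeeping (it is exactly what proves Lemma~\ref{tech-2}), and your handling of the $w\in S_1(v)$ part via the triangle inequality is the same as the paper's. So the route is legitimately different in its organisation, though not in its underlying mechanism.

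There is, however, one step that fails as written. You assert that for all $0\le j\le D$ one has $z\in S_j(u)$ if and only if $u_{[j+1,D]}=z_{[1,D-j]}$, and you apply this with $j=D$, where the condition is vacuous and would give $S_D(u)=V$. That is true in $B(d,D)$ but false in $K(d,D)$: there $S_D(u)=\{z\in V:\ z_1\ne u_D\}$, of cardinality $d^D<|V|$ (the paper only guarantees $S_j(u)=V$ for $j\ge D+1$ in the Kautz case). Consequently, for $G=K(d,D)$ and $j=D$ your claimed equivalence ``$z\in S_D(w)$ iff $w_{[D+1,D]}=v_{[i+1,i]}$'' does not justify the independence of the membership $z\in S_D(w)$ from $z$. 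The conclusion you need nevertheless survives with a one-line patch: if $i<D$, every $z\in S_i^\star(v)$ has $z_1=v_{i+1}$, so $z\in S_D(w)$ holds if and only if $v_{i+1}\ne w_D$, which again depends only on $v,w,i$; and if $i=D$ the only admissible index is $j_0=D$, so uniqueness is automatic. You should add this case distinction (or restrict your prefix formula to $j<D$ and treat $j=D$ separately); with that repair the proof is complete.
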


\begin{proposition}
\label{v2-second}
Assume $d\ge 3$. Let $v\in V$, $w\in S_1(v)$, and let $i\le D$. Then,
\begin{enumerate}
\item If $i=j=D$, then  $S_{i}^\star(v) \cap S_{j}^{\star}(w)\ne\emptyset$.
\item If $j\ge i\ne D$, then $S_{i}^\star(v) \cap S_{j}^{\star}(w)\ne\emptyset$ if and only if $S_i(v)\cap S_j(w)\neq \emptyset$ and  $S_i(v)\not\subseteq S_{k,j}(w)$ for  $i \le  k < j$.
\item The intersection   $S_{i}^\star(v) \cap S_{i-1}^{\star}(w)$ is empty if and only if $G=B(d,D)$ and $v_i=v_{i+1}=\cdots=v_D=w_D$. Furthermore,  if $S_{i}^\star(v)\cap S_{i-1}^{\ast}(w)=\emptyset$, then $S_{i}^\star(v)\cap S_{i}^{\ast}(w)\ne\emptyset$.
\item There exists a unique integer $j$, $i\le j\le D$, such that the intersection $S_{i}^\star(v) \cap S_{j}^{\star}(w)$  is non-empty.
\end{enumerate}
\end{proposition}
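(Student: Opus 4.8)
The plan is to reduce all four parts to the elementary ``cylinder-set'' calculus for $B(d,D)$ and $K(d,D)$ together with crude cardinality estimates that become favourable precisely when $d\ge 3$, supplemented by Proposition~\ref{propo 2.3} (for $|S_{i}^\star(v)|$) and Proposition~\ref{v2-first} (for uniqueness). Recall that for $i\le D-1$ the set $S_i(v)=\{z\in V:\ z_{[1,D-i]}=v_{[i+1,D]}\}$ is a cylinder determined by a prefix of length $D-i$, so two such cylinders are disjoint unless one defining prefix extends the other, in which case one set contains the other; for the boundary index one easily checks $S_D(v)=V$ in $B(d,D)$ and $S_D(v)=\{z\in V:\ z_1\ne v_D\}$ in $K(d,D)$. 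I shall use throughout the identity, valid for all $i,j$ and immediate from (\ref{layer-previ}),
$$S_{i}^\star(v)\cap S_{j}^\star(w)=\big(S_i(v)\cap S_j(w)\big)\setminus\Big(\bigcup_{k<i}S_k(v)\cup\bigcup_{l<j}S_l(w)\Big),$$
and the two bounds: by Proposition~\ref{propo 2.3}, $|S_{i}^\star(v)|\ge d^i-\sum_{k=0}^{i-1}d^k$; and $\big|\bigcup_{l=0}^{i-1}S_l(w)\big|=\sum_{l=0}^{i-1}|S_{l}^\star(w)|\le\sum_{l=0}^{i-1}d^l$. An elementary computation shows that for $d\ge 3$ one has both $d^i-\sum_{k=0}^{i-1}d^k>d^{i-1}$ and $d^i-2\sum_{k=0}^{i-1}d^k>0$. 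Parts (4) and (1) then drop out: for $i\ge 1$ the first inequality gives $|S_{i}^\star(v)|>d^{i-1}\ge|S_{i-1}^\star(w)|$, so $S_{i}^\star(v)\not\subseteq S_{i-1}^\star(w)$; since the $S_{j}^\star(w)$ partition $V$, the disjoint decomposition $S_{i}^\star(v)=\bigcup_{j=i-1}^{D}\big(S_{i}^\star(v)\cap S_{j}^\star(w)\big)$ recorded before Proposition~\ref{v2-first} exhibits some $j$ with $i\le j\le D$ and $S_{i}^\star(v)\cap S_{j}^\star(w)\ne\emptyset$, and Proposition~\ref{v2-first} says it is unique, which is (4) (the case $i=0$ being trivial); taking $i=D$ forces $j=D$, which is (1).

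\emph{Part (3).} Here $1\le i\le D$. Since $w_{[i,D]}=v_{[i+1,D]}\,v_{D+1}$ has $v_{[i+1,D]}$ as a prefix, $S_{i-1}(w)\subseteq S_i(v)$, so the identity reduces to $S_{i}^\star(v)\cap S_{i-1}^\star(w)=S_{i-1}(w)\setminus\big(\bigcup_{k<i}S_k(v)\cup\bigcup_{l<i-1}S_l(w)\big)$. Among the removed sets, the only one whose defining prefix has the same length $D-i+1$ as that of $S_{i-1}(w)$ is $S_{i-1}(v)$, and $S_{i-1}(v)=S_{i-1}(w)$ exactly when $v_{[i,D]}=w_{[i,D]}$, i.e. when $v_i=v_{i+1}=\cdots=v_D=v_{D+1}=w_D$; this forces $G=B(d,D)$, and then $S_{i-1}(w)=S_{i-1}(v)\subseteq\bigcup_{k<i}S_k(v)$, so the intersection is empty. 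If that prefix equality fails, every removed set is a \emph{proper} sub-cylinder of $S_{i-1}(w)$ (or disjoint from it), so the removed part has size at most $2\sum_{k=0}^{i-2}d^k<d^{i-1}=|S_{i-1}(w)|$ and the intersection is non-empty; this proves the ``if and only if''. For the final assertion, if $S_{i}^\star(v)\cap S_{i-1}^\star(w)=\emptyset$ we are in the case $G=B(d,D)$, $v_i=\cdots=v_D=v_{D+1}$, whence $S_i(v)=S_i(w)$ (both are cylinders with the same constant prefix of length $D-i$, or both equal $V$ when $i=D$); by the identity, $S_{i}^\star(v)\cap S_{i}^\star(w)=S_i(v)\setminus\big(\bigcup_{k<i}S_k(v)\cup\bigcup_{l<i}S_l(w)\big)$ then has size at least $d^i-2\sum_{k=0}^{i-1}d^k>0$.

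\emph{Part (2).} Now $1\le i\le D-1$ and $i\le j\le D$. The first step is to show that the conditions ``$S_i(v)\not\subseteq S_{k,j}(w)$ for $i\le k<j$'' and ``$S_i(v)\not\subseteq S_k(w)$ for $i\le k<j$'' are equivalent: one direction is trivial because $S_{k,j}(w)\subseteq S_k(w)$, and for the other, if $S_i(v)\subseteq S_k(w)$ for some $k\in[i,j)$, then the $S_k(w)$ with this property form a chain (cylinders all containing the cylinder $S_i(v)$, hence linearly ordered by prefix length), and for the largest such $k$ one reads off directly from Definition~\ref{def_S_kj} that $S_{k,j}(w)=S_k(w)$. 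Granting this, the ``only if'' direction of (2) is immediate, since any $z\in S_{i}^\star(v)\cap S_{j}^\star(w)$ lies in $S_i(v)\cap S_j(w)$ and avoids every $S_k(w)$ with $k<j$. Conversely, if $S_i(v)\cap S_j(w)\ne\emptyset$ then comparing prefix lengths (using the description of $S_D(w)$ when $j=D$) gives $S_i(v)\subseteq S_j(w)$, while for $i\le l<j$ the hypothesis $S_i(v)\not\subseteq S_l(w)$ together with the disjoint-or-nested dichotomy forces $S_i(v)\cap S_l(w)=\emptyset$; substituting into the identity gives $S_{i}^\star(v)\cap S_{j}^\star(w)=S_{i}^\star(v)\setminus\bigcup_{l<i}S_l(w)$, of cardinality at least $d^i-2\sum_{l=0}^{i-1}d^l>0$.

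\emph{Expected obstacle.} The genuine care is needed in two places: the boundary cases $i=D$ or $j=D$, where $S_D(\cdot)$ is the complement of a cylinder rather than a cylinder and the disjoint-or-nested dichotomy must be re-examined (this is exactly why (2) excludes $i=D$); and the passage in (2) between the $S_{k,j}$-formulation and the $S_k$-formulation, which hinges on the laminar structure of $\{S_k(w)\}$ that Definition~\ref{def_S_kj} is built to capture. An alternative to the cardinality estimates, closer in spirit to the paper's constructive style, is to produce a witness $z$ explicitly: fix the forced prefix of $z$ and choose its remaining at most $i-1$ symbols greedily, at each step avoiding the at most three values (two ``shift'' values and, in the Kautz case, the preceding symbol) that would create an unwanted shorter walk; the hypothesis $d\ge 3$ is precisely what leaves room for this choice.
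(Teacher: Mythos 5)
Your proof is correct, but it takes a noticeably different route from the paper's. The paper proves the four statements in order (1)--(4): it establishes (1) by the case analysis of Lemma~\ref{tech-7} (statements (2) and (3)) together with the bound $\tfrac{(d-3)d^D+d^{D-1}+1}{d-1}>0$; it proves (2) and (3) by routing everything through the structural Lemmas~\ref{tech-5}, \ref{tech-7} and \ref{tech-7bis}, which isolate the exceptional set $S'$; and it proves (4) last, by exhibiting $\ell_0=\min\{\ell: i\le\ell\le D,\ S_i(v)\cap S_\ell(w)\ne\emptyset\}$ and verifying the hypotheses of (2). You instead prove (4) first by a pigeonhole argument --- $|S_{i}^\star(v)|\ge d^i-\sum_{k<i}d^k>d^{i-1}\ge|S_{i-1}^\star(w)|$ forces $S_i^\star(v)\not\subseteq S_{i-1}^\star(w)$, and Proposition~\ref{v2-first} does the rest --- and then obtain (1) as the special case $i=D$; this is genuinely slicker than the paper's treatment of (1), which needs no separate De Bruijn/Kautz discussion in your version. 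For (2) and (3) you replace the paper's technical lemmas by direct cylinder reasoning from \eqref{layer-previ}, arriving at essentially the same counting bounds ($d^i-2\sum_{k<i}d^k>0$ for $d\ge3$ versus the paper's $\tfrac{(d-3)d^i+d^{i-1}+1}{d-1}>0$). What you lose is reusability: Lemmas~\ref{tech-7} and \ref{tech-7bis} are exactly what the paper needs again for the $d=2$ case (Proposition~\ref{v2-second-cas 2}) and for the cardinality formulas in Theorems~\ref{propo 2.9-v2-cas 1}--\ref{propo 2.9-v2-cas 3}, so the paper's heavier setup pays off downstream. One point to tighten: in your ``first step'' of part (2), the nontrivial direction of the equivalence between ``$S_i(v)\not\subseteq S_{k,j}(w)$ for all $k$'' and ``$S_i(v)\not\subseteq S_{k}(w)$ for all $k$'' requires $S_k(w)\subseteq S_j(w)$ for the maximal $k$ (otherwise $S_{k,j}(w)$ could be empty and the conclusion $S_{k,j}(w)=S_k(w)$ fails); this follows from $S_i(v)\subseteq S_k(w)$, $S_i(v)\subseteq S_j(w)$ and the nested-or-disjoint dichotomy, so it is available under the standing hypothesis $S_i(v)\cap S_j(w)\ne\emptyset$, but the equivalence is not true unconditionally and you should say where the hypothesis enters.
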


The condition $d\ge 3$ cannot be removed from the hypothesis of Proposition~\ref{v2-second}. Namely, in Remarks~\ref{rem1-v2-second} and \ref{rem2-v2-second} we show that if $d=2$ and $G=B(d,D)$, then statements (1) and (2) of Proposition~\ref{v2-second} do  not necessarily hold. The case $d=2$ is completely studied in Proposition~\ref{v2-second-cas 2}.

 \begin{remark}
 \label{rem1-v2-second}
 Let us show that if $d=2$, then statement (1) of Proposition~\ref{v2-second} does not necessarily hold; that is, we can have $S_{D}^\star(v) \cap S_{D}^{\star}(w)=\emptyset$. Consider for example the digraph $B(2,4)$ with symbol alphabet $A=\{\alpha,\beta\}$, and let $v$ and $w\in S_1(v)$ be vertices with sequence representation $v=\alpha\beta\alpha\alpha$ and $w=\beta\alpha\alpha\beta$, respectively. It is easily checked that $S_4^\star(v)=V\setminus\left(S_3(v)\cup S_1(v)\right)$ and that $S_4^\star(w)=V\setminus\left(S_3(w)\cup S_2(w)\cup S_1(w)\right)$. Moreover we have $S_1(v)\subseteq S_3(w)$, $S_2(w)\subseteq S_3(v)$, and $S_1(w)\subseteq S_3(v)$.
 Therefore 
 \begin{align*}
 S_{4}^\star(v)\cap S_{4}^{\ast}(w) &=V\setminus\left(S_3(v)\cup S_1(v)\cup S_3(w)\cup S_2(w)\cup S_1(w)\right)\\
 &=V\setminus\left(S_3(v)\cup  S_3(w)\right).
 \end{align*}
 But $S_3(v)=\{u\in V:\ u=\alpha\ast\ast\ast\}$ and $S_3(w)=\{u\in V:\ u=\beta\ast\ast\ast\}$, and thus we have $S_3(v)\cup S_3(w)=V$.  Therefore $S_{4}^\star(v)\cap S_{4}^{\ast}(w)=\emptyset$.
 \end{remark}

 \begin{remark}
 \label{rem2-v2-second}
 Now let us see that if $d=2$, then statement (2) of Proposition~\ref{v2-second} does not necessarily hold; that is,
 there exist $j\ge i\ne D$ such that $S_{i}^\star(v) \cap S_{j}^{\star}(w)=\emptyset$,  $S_i(v)\cap S_j(w)\neq \emptyset$ and $S_i(v)\not\subseteq S_{k,j}(w)$ for  $i \le  k < j$. Indeed, consider for instance $G=B(2,10)$ with symbol alphabet $A=\{\alpha,\beta\}$, and let $v$ and $w\in S_1(v)$ be the vertices which sequence representation  is $v=\alpha\beta\alpha\alpha\alpha\alpha\alpha\alpha\alpha\alpha$ and $w=\beta\alpha\alpha\alpha\alpha\alpha\alpha\alpha\alpha\beta$, respectively. Set $j=D=10$ and $i=3$. Since $G=B(2,10)$ we have $S_{10}(w)=V$ and hence $S_i(v)\cap S_j(w)\neq \emptyset$. We can easily check that for $3\le k< 10$ the only nonempty sets $S_{k,10}(w)$ are $S_8(w)$ and $S_9(w)$. Moreover, since $S_3(v)=\{u\in V:\ u=\alpha\alpha\alpha\alpha\alpha\alpha\alpha\ast\ast\ast\}$, $S_8(w)=\{u\in V:\ u=\alpha\beta\ast\ast\ast\ast\ast\ast\ast\ast\}$, and $S_9(w)=\{u\in V:\ u=\beta\ast\ast\ast\ast\ast\ast\ast\ast\ast\}$, we get that $S_3(v)\not\subseteq S_8(w)$ and $S_3(v)\not\subseteq S_9(w)$. Hence we have $S_3(v)\not\subseteq S_{k,10}(w)$ for $3\le k< 10$. However, let us see that $S_{i}^\star(v) \cap S_{j}^{\star}(w)=\emptyset$. Indeed, from (\ref{layer-previ}) we deduce that
 $$
 S_{3}^\star(v)\cap S_{10}^{\ast}(w)\subseteq \big ( S_{3}(v)\cap S_{10}(w)\big)\setminus \big ( S_{2}(v)\cup S_{2}(w)\big);
 $$
 and thus, since $S_{10}(w)=V$, $S_3(v)=\{u\in V:\ u=\alpha\alpha\alpha\alpha\alpha\alpha\alpha\ast\ast\ast\}$, $S_2(v)=\{u\in V:\ u=\alpha\alpha\alpha\alpha\alpha\alpha\alpha\alpha\ast\ast\}$, and $S_2(w)=\{u\in V:\ u=\alpha\alpha\alpha\alpha\alpha\alpha\alpha\beta\ast\ast\}$, we conclude that
 $$S_{3}^\star(v)\cap S_{10}^{\ast}(w)\subseteq S_3(v)\setminus\left(S_2(v)\cup S_2(w)\right)=\emptyset,$$
 because $S_2(v)\cup S_2(w)=S_3(v)$. In particular, $S_{3}^\star(v)\cap S_{10}^{\ast}(w)=\emptyset$, as claimed.
 \end{remark}

\begin{proposition}
\label{v2-second-cas 2}
Assume $d=2$. Let $v\in V$, $w\in S_1(v)$, and let $i\le D$. Then,
\begin{enumerate}
\item  If $i=j=D$, then $S_{i}^\star(v) \cap S_{j}^{\star}(w)\ne\emptyset$ if and only if $G=K(d,D)$ or $v_D=w_D$.
\item If $j\ge i\ne D$, then $S_{i}^\star(v) \cap S_{j}^{\star}(w)\ne\emptyset$ if and only if $S_i(v)\cap S_j(w)\neq \emptyset$,  $S_i(v)\not\subseteq S_{k,j}(w)$ for  $i \le  k < j$, and one of the following conditions holds:
\begin{enumerate}
\item $j<D$;
\item $j=D$, and $v_{[i,D-1]}\ne v_{[i+1,D]}$ or $S_{i-1,j}(w)=\emptyset$.
\end{enumerate} 
\item The intersection   $S_{i}^\star(v) \cap S_{i-1}^{\star}(w)$ is empty if and only if $G=B(d,D)$ and $v_i=v_{i+1}=\cdots=v_D=w_D$. Furthermore,  if $S_{i}^\star(v)\cap S_{i-1}^{\ast}(w)=\emptyset$, then $S_{i}^\star(v)\cap S_{i}^{\ast}(w)\ne\emptyset$.
\item The intersection $S_{i}^\star(v) \cap S_{j}^{\star}(w)$  is empty for all integer $j$, $i\le j\le D$, if and only if $G=B(d,D)$ and $v_i=v_{i+1}=\cdots=v_D\ne w_D$. 
\end{enumerate}
\end{proposition}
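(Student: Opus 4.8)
The plan is to run the machinery of the proof of Proposition~\ref{v2-second}, and to locate exactly where the hypothesis $d\ge 3$ was used, so as to replace the corresponding conclusions by the $d=2$ ones. Everything is read off from the set identity
$$S_{i}^\star(v)\cap S_{j}^{\star}(w)=\bigl(S_i(v)\cap S_j(w)\bigr)\setminus\Bigl(\textstyle\bigcup_{k<i}S_k(v)\ \cup\ \bigcup_{k<j}S_k(w)\Bigr),$$
from the technical lemmas of Subsection~\ref{technical-lemmas} (in particular the \emph{cylinder dichotomy} --- for $k<\ell$ the set $S_k(x)$ is either contained in $S_\ell(x)$ or disjoint from it ---, the inclusion $S_k(w)\subseteq S_{k+1}(v)$ valid whenever $w\in S_1(v)$, and the identification $S_{i-1}(w)=\{u\in S_i(v):u_{D-i+1}=w_D\}$), and from Proposition~\ref{propo 2.3}, which for $d=2$ gives $|S_i^\star(v)|\ge 2^{i-1}+1$ when its leading coefficient $a_{i-1}$ vanishes, and $S_i^\star(v)=S_i(v)\setminus S_{i-1}(v)$ (a single cylinder of size $2^{i-1}$) when $a_{i-1}=1$ and $i\le D-1$, equivalently when $G=B$ and $v_i=\cdots=v_D$. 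The decisive new feature for $d=2$ is that the cylinder $S_{D-1}(x)=\{u:u_1=x_D\}$ is exactly half of $V$ in $B(2,D)$, so $S_{D-1}(v)\cup S_{D-1}(w)=V$ as soon as $v_D\ne w_D$ --- a coincidence impossible for $d\ge 3$, and the reason every extra hypothesis in the statement concerns $j=D$ (equivalently $i=j=D$ for item~(1)).

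For item~(3) the formulation does not involve $d$ and the argument works for every $d$: from $d(w,u)\ge d(v,u)-1$ one gets $S_i^\star(v)\cap S_{i-1}^\star(w)=S_i^\star(v)\cap S_{i-1}(w)=\{u\in S_i^\star(v):u_{D-i+1}=w_D\}$, and a vertex $u\in S_{i-1}^\star(w)$ satisfies $d(v,u)=i$ unless $v_i=\cdots=v_D=v_{D+1}=w_D$; the addendum follows because in the exceptional case $S_i(v)=S_i(w)$ and $S_{i-1}(v)=S_{i-1}(w)$, hence $S_i^\star(v)=S_i^\star(w)\ne\emptyset$ (for $i=D$ this last assertion is contained in item~(1)). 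For item~(4) one first notes, via Proposition~\ref{v2-first} and the partition $V=\bigcup_j S_j^\star(w)$, that the intersection is empty for every $j\in\{i,\dots,D\}$ precisely when $S_i^\star(v)\subseteq S_{i-1}^\star(w)$, i.e. when every $u\in S_i^\star(v)$ has $u_{D-i+1}=w_D$. For $i\le D-1$ this is decided by counting and structure: if $a_{i-1}=0$ then $|S_i^\star(v)|\ge 2^{i-1}+1>2^{i-1}=|\{u\in S_i(v):u_{D-i+1}=w_D\}|$ and the inclusion fails; if $a_{i-1}=1$ (so $G=B$, $v_i=\cdots=v_D=c$), then $S_i^\star(v)$ is the cylinder $\{u\in S_i(v):u_{D-i+1}=c'\}$ with $c'\ne c$ the other symbol, which lies in $S_{i-1}(w)$ exactly when $w_D=c'$, i.e. when $v_i=\cdots=v_D\ne w_D$. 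The case $i=D$ of item~(4) is the assertion ``$S_D^\star(v)\cap S_D^\star(w)=\emptyset$ iff $G=B$ and $v_D\ne w_D$'', which is item~(1).

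For items~(1) and~(2) the genuinely new work is confined to $G=B(2,D)$ with $j=D$: for $j<D$, and for $G=K(2,D)$ (where $v_{[i,D-1]}\ne v_{[i+1,D]}$ holds automatically), the hypotheses coincide with those of Proposition~\ref{v2-second} and its proof applies verbatim, the appeal to $d\ge 3$ there occurring only in the subcase $G=B$, $j=D$. For item~(1): if $v_D\ne w_D$ then $S_{D-1}(v)\cup S_{D-1}(w)=V$ forces $S_D^\star(v)\cap S_D^\star(w)\subseteq V\setminus(S_{D-1}(v)\cup S_{D-1}(w))=\emptyset$; if $v_D=w_D$ then $S_{D-1}(w)=S_{D-1}(v)$ together with $S_k(w)\subseteq S_{k+1}(v)$ yields $\bigcup_{k<D}S_k(w)\subseteq\bigcup_{k<D}S_k(v)$, hence $\emptyset\ne S_D^\star(v)\subseteq S_D^\star(w)$; and for $G=K$ one has $v_D\ne w_D$ automatically, $S_D^\star(v)\cap S_D^\star(w)=S_D^\star(v)\setminus\{u_1=w_D\}$ (again by $S_k(w)\subseteq S_{k+1}(v)$), which is nonempty since $|S_D^\star(v)|\ge 2^{D-1}+1>2^{D-1}=|\{u_1=w_D\}|$. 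For item~(2) with $G=B$, $j=D$: as $S_D(w)=V$, the intersection is nonempty iff $S_i^\star(v)\not\subseteq\bigcup_{k<D}S_k(w)$, and by the dichotomy $\bigcup_{k<D}S_k(w)\cap S_i(v)=\bigl(\bigcup_{i\le k<D,\,S_i(v)\subseteq S_k(w)}S_i(v)\bigr)\cup\bigl(\bigcup_{k<i,\,S_k(w)\subseteq S_i(v)}S_k(w)\bigr)$. Thus ``$S_i(v)\subseteq S_{k,D}(w)$ for some $i\le k<D$'' --- equivalent, after pushing containments up to the maximal admissible index, to ``$S_i(v)\subseteq S_k(w)$ for some $i\le k<D$'' --- already empties the intersection; in the remaining situation one must decide whether the cylinders $S_k(w)$ ($k<i$) lying in $S_i(v)$, chiefly $S_{i-1}(w)$ of size $2^{i-1}$, together cover $S_i^\star(v)$, and here the relevant split is exactly the clause ``$v_{[i,D-1]}\ne v_{[i+1,D]}$ or $S_{i-1,D}(w)=\emptyset$'' (Definition~\ref{def_S_kj}): if $v_i=\cdots=v_D$ then $S_i^\star(v)$ equals $S_{i-1}(w)$ or is a cylinder disjoint from it, and a direct check (guided by Remarks~\ref{rem1-v2-second} and~\ref{rem2-v2-second}) shows that either $S_{i-1,D}(w)\ne\emptyset$ or $S_i(v)\subseteq S_{D-1,D}(w)$, so the intersection is empty; if $v_{[i,D-1]}\ne v_{[i+1,D]}$, i.e. $a_{i-1}=0$, then $|S_i^\star(v)|\ge 2^{i-1}+1$ and a more delicate count of the cylinders of $w$ sitting inside $S_i(v)$ shows $S_i^\star(v)\not\subseteq\bigcup_{k<D}S_k(w)$. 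The main obstacle is precisely this last count --- matching the surviving-versus-vanishing behaviour of $S_i^\star(v)$ against the two clauses --- for which I would rely on the containment lemmas of Subsection~\ref{technical-lemmas} and the cardinality estimate of Proposition~\ref{propo 2.3}.
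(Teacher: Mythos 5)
Your plan correctly isolates where $d\ge 3$ enters the proof of Proposition~\ref{v2-second} (only in $G=B(d,D)$ with $j=D$, resp.\ $i=j=D$), and your treatments of items (1), (3) and (4) follow essentially the paper's route: the same containment dichotomy, the same covering $S_{D-1}(v)\cup S_{D-1}(w)=V$ when $v_D\ne w_D$, the same counting bound $|S_i^\star(v)|\ge 2^{i-1}+1$ when $a_{i-1}=0$. The genuine gap is the sufficiency direction of item (2): showing that under the stated conditions the intersection is \emph{nonempty} is reduced by you to ``a more delicate count of the cylinders of $w$ sitting inside $S_i(v)$'', which you explicitly call ``the main obstacle'' and do not carry out. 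That count is exactly what the paper's Lemma~\ref{tech-7bis} supplies: under the hypotheses the obstruction $S'=S_{i-1,i}(v)\cup\bigl(S_{i-1,j}(w)\cap S_i(v)\bigr)$ is a \emph{single} half-cylinder (one of $\emptyset$, $S_{i-1}(v)$, $S_{i-1}(w)$), never the disjoint union $S_{i-1}(v)\cup S_{i-1}(w)$ --- the case ruled out precisely by clause (b), via statement (2.d) of Lemma~\ref{tech-5} --- whence $|S_i^\star(v)\cap S_j^\star(w)|\ge d^i-\frac{d^{i-1}-1}{d-1}-d^{i-1}=\frac{(d-2)d^i+1}{d-1}=1$ for $d=2$. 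Without this step the central equivalence of the proposition is unproved.

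Two of your intermediate claims are also false, although the conclusions you draw from them survive. First, $a_{i-1}=1$ with $i\le D-1$ does \emph{not} give $S_i^\star(v)=S_i(v)\setminus S_{i-1}(v)$: in $B(2,5)$ with $v=\beta\alpha\beta\alpha\alpha$ and $i=4$ one has $a_3=a_1=1$, so $|S_4^\star(v)|=2^4-2^3-2=6<2^3$; only the inclusion $S_i^\star(v)\subseteq S_i(v)\setminus S_{i-1}(v)$ holds, which is fortunately all that your arguments for items (2) and (4) actually use. Second, in the addendum of item (3), the equalities $S_i(v)=S_i(w)$ and $S_{i-1}(v)=S_{i-1}(w)$ do not imply $S_i^\star(v)=S_i^\star(w)$: with the same $v$ and $w=\alpha\beta\alpha\alpha\alpha\in S_1(v)$ (so $v_4=v_5=w_5$, the exceptional case) one gets $|S_4^\star(v)|=6$ but $|S_4^\star(w)|=7$. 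The paper instead deduces only $S_i(v)=S_i(w)$, hence $S_i(v)\cap S_i(w)\ne\emptyset$, and then invokes statement (2) with $j=i<D$ (where clause (a) holds and the side conditions are vacuous) to conclude $S_i^\star(v)\cap S_i^\star(w)\ne\emptyset$ --- one more reason why item (2) must be established in full before the rest can be closed.
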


Next we show that, whenever the intersection $S_{i}^\star(v) \cap S_j^{\ast}(w)$ is nonempty, its cardinality has a polynomial expression of the form 
$$
\big |S_{i}^\star(v) \cap S_j^{\ast}(w) \big |=d^i-b_{i-1}d^{i-1}-\ldots -b_{1} d-b_{0},
$$ 
where the coefficients $b_{k}$ are determined from the coefficients $a_k$ of the polynomial expression of $|S_{i}^\star(v)|$. The coefficients $b_{k}$ are computed in the following theorems, that will be  proved in Section~\ref{proofs}. 

\begin{theorem}
\label{propo 2.9-v2-cas 1}
Let $v\in V$, $w\in S_1(v)$, and let $1\le i\le D$. Assume that $S_{i}^\star(v) \cap S_{i-1}^{\ast}(w)\ne\emptyset$ and that 
$S_{i}^\star(v) \cap S_{j_0}^{\ast}(w)\ne\emptyset$ for some $i\le j_0\le D$. Then, 
$$
S_i^\star(v)=\left(S_{i}^\star(v) \cap S_{i-1}^{\star}(w)\right)\cup \left(S_{i}^\star(v) \cap S_{j_0}^{\star}(w)\right),
$$
and so 
$$
\big |S_i^\star(v)\big |=\big |S_{i}^\star(v) \cap S_{i-1}^{\star}(w)\big |+ \big |S_{i}^\star(v) \cap S_{j_0}^{\star}(w)\big |.
$$
Moreover, if 
$$\big |S_{i}^\star(v) \big |=d^i-a_{i-1}d^{i-1}-\ldots -a_{1} d-a_{0}$$ 
is the polynomial expression of $|S_{i}^\star(v)|$ given in Proposition~\ref{propo 2.3}, then  $|S_{i}^\star(v) \cap S_{i-1}^{\ast}(w) \big |$ and $|S_{i}^\star(v) \cap S_{j_0}^{\ast}(w) \big |$ have polynomial expressions
$$
\big |S_{i}^\star(v) \cap S_{i-1}^{\ast}(w) \big | = d^{i-1}-b_{i-2}d^{i-2}-\ldots -b_{1} d-b_{0},
$$
$$
\big |S_{i}^\star(v) \cap S_{j_0}^{\ast}(w) \big | = d^i-(a_{i-1}+1)d^{i-1}-(a_{i-2}-b_{i-2})d^{i-2} -\ldots -(a_1-b_{1}) d-(a_0-b_{0}),
$$
where $b_k\in\{0,1\}$ and $b_k=1$  if and only if $a_k=1$ and $v_{D-i+k+1}=w_D$.
\end{theorem}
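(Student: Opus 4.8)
The plan is to first reduce both cardinality formulas to one computation by establishing the displayed set decomposition, and then to compute $|S_i^\star(v)\cap S_{i-1}^\star(w)|$ by matching the sets removed from $S_{i-1}(w)$ with the sets removed from $S_i(v)$ in Proposition~\ref{propo 2.3}.

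\emph{Step 1: the decomposition.} Since there is an arc from $v$ to $w$, prepending it to a shortest $w$--$u$ walk shows that the distance from $v$ to $u$ exceeds the distance from $w$ to $u$ by at most one; hence $S_i^\star(v)\cap S_j^\star(w)=\emptyset$ for every $j<i-1$, and from $V=\bigcup_{j=0}^D S_j^\star(w)$ we get $S_i^\star(v)=\bigcup_{j\ge i-1}\bigl(S_i^\star(v)\cap S_j^\star(w)\bigr)$. By Proposition~\ref{v2-first} there is at most one $j\ge i$ with $S_i^\star(v)\cap S_j^\star(w)\ne\emptyset$, and by hypothesis it is $j_0$; so $S_i^\star(v)=\bigl(S_i^\star(v)\cap S_{i-1}^\star(w)\bigr)\cup\bigl(S_i^\star(v)\cap S_{j_0}^\star(w)\bigr)$. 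The two sets on the right lie in different distance layers of $w$ (as $i-1<i\le j_0$), so the union is disjoint and cardinalities add. This proves the first two displays, and it shows that the expression for $|S_i^\star(v)\cap S_{j_0}^\star(w)|$ will follow from that for $|S_i^\star(v)\cap S_{i-1}^\star(w)|$ by subtracting it term by term from the expression of $|S_i^\star(v)|$ --- the coefficient of $d^{i-1}$ becoming $a_{i-1}+1$ precisely because $|S_i^\star(v)\cap S_{i-1}^\star(w)|$ will turn out to be a polynomial of degree $i-1$ with leading coefficient $1$.

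\emph{Step 2: a clean description of $T:=S_i^\star(v)\cap S_{i-1}^\star(w)$.} Prepending the arc $v\to w$ to walks from $w$ shows $S_{i-1}(w)\subseteq S_i(v)$; and if $u\in S_i^\star(v)\cap S_{i-1}(w)$ then $u$ is at distance $\le i-1$ from $w$, while distance $\le i-2$ from $w$ would force distance $\le i-1<i$ from $v$ (prepend the arc again) --- impossible; hence $u$ is at distance exactly $i-1$ from $w$. So $T=S_i^\star(v)\cap S_{i-1}(w)$, and by $(\ref{layer-previ})$ together with $S_{i-1}(w)\subseteq S_i(v)$,
\[
T=S_{i-1}(w)\setminus\bigcup_{k=0}^{i-1}\bigl(S_{i-1}(w)\cap S_k(v)\bigr).
\]
Writing $w=v_2\cdots v_D w_D$ and comparing sequence representations (this is where the technical lemmas of Subsection~\ref{technical-lemmas} enter), I would check that for $0\le k\le i-1$ the set $S_{i-1}(w)\cap S_k(v)$ is either empty or equal to $S_k(v)$, the latter occurring exactly when $S_k(v)\subseteq S_i(v)$ and $v_{D-i+k+1}=w_D$. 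For $k=i-1$ the inclusion $S_{i-1}(v)\subseteq S_{i-1}(w)$ (equivalently equality, since both sets have $d^{i-1}$ vertices) would force $v_i=\cdots=v_D=w_D$, and then $T\subseteq S_{i-1}(w)\setminus S_{i-1}(v)=\emptyset$, against the hypothesis; so only indices $k\le i-2$ occur in the union (this is why the polynomial has degree $i-1$), and, when $a_{i-1}=1$, necessarily $v_D\ne w_D$.

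\emph{Step 3: counting.} By the technical lemmas any two of the sets $S_k(v)\subseteq S_i(v)$ are nested or disjoint, so the family $\mathcal F'=\{S_k(v):0\le k\le i-1,\ S_k(v)\subseteq S_{i-1}(w)\}$ is laminar and $\bigcup\mathcal F'$ is the disjoint union of its maximal members; hence $|T|=d^{i-1}-\sum d^k$, where the sum runs over those $k$ for which $S_k(v)$ is maximal in $\mathcal F'$. To identify these I would prove an upward-closure property: if $S_k(v)\in\mathcal F'$ and $S_k(v)\subseteq S_j(v)$ with $k<j<i$, then $S_j(v)\in\mathcal F'$ --- obtained by rewriting the inclusions as coincidences of subwords of $v$ (plus the single scalar equality $v_{D-i+k+1}=w_D$) and propagating the induced periodicity by one position. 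Granting this, a set is maximal in $\mathcal F'$ iff it is maximal among all $S_k(v)\subseteq S_i(v)$ with $k<i$ (the only possible extra dominator, $S_{i-1}(v)$ when $a_{i-1}=1$, is not in $\mathcal F'$ since then $v_D\ne w_D$, and by upward closure it cannot dominate a member of $\mathcal F'$). By Definition~\ref{def_S_kj} and Proposition~\ref{propo 2.3}, maximality of $S_k(v)$ among the $S_k(v)\subseteq S_i(v)$, $k<i$, is exactly $S_{k,i}(v)\ne\emptyset$, i.e.\ $a_k=1$; and, for such $k$, membership in $\mathcal F'$ is exactly $v_{D-i+k+1}=w_D$. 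Thus $|T|=d^{i-1}-\sum_{k=0}^{i-2}b_k d^k$ with $b_k\in\{0,1\}$ and $b_k=1$ iff $a_k=1$ and $v_{D-i+k+1}=w_D$, and the expression for $|S_i^\star(v)\cap S_{j_0}^\star(w)|$ then follows from Step~1.

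\emph{Main obstacle.} The delicate point is the dictionary used in Steps 2 and 3 between inclusions among the $S_k(v)$ and $S_{i-1}(w)$ and coincidences of subwords of the sequence representations of $v$ and $w$ --- in particular the upward-closure step and the verification that, relative to the condition $S_k(v)\subseteq S_i(v)$, membership of $S_k(v)$ in $\mathcal F'$ is governed by the single equality $v_{D-i+k+1}=w_D$. Everything else is the triangle inequality, disjointness of distance layers, and the laminar-family counting already developed for Proposition~\ref{propo 2.3}.
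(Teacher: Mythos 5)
Your proposal is correct and follows essentially the same route as the paper: the decomposition via Proposition~\ref{v2-first}, the reduction of $|S_i^\star(v)\cap S_{j_0}^\star(w)|$ to a term-by-term subtraction, and the computation of $|S_i^\star(v)\cap S_{i-1}^\star(w)|$ as $S_{i-1}(w)$ minus a disjoint union of sets $S_k(v)$ of size $d^k$ (with the $k=i-1$ term ruled out exactly as you do) all match the paper's argument, which packages the middle step as Lemma~\ref{tech-7}(1) together with Remark~\ref{void}. The one organizational difference is your identification of the contributing indices via maximal elements of $\mathcal F'$ and an upward-closure lemma, where the paper instead shows directly that $b_k=1$ iff $S_{k,i}(v)\ne\emptyset$ and $S_{k,i}(v)\subseteq S_{i-1}(w)$, and then that (given $a_k=1$) the inclusion $S_k(v)\subseteq S_{i-1}(w)$ is equivalent to the single equality $v_{D-i+k+1}=w_D$; your upward-closure statement is true and provable by the subword/periodicity bookkeeping you describe, so this is a harmless detour rather than a gap.
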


\begin{remark}
We know from Proposition~\ref{propo 2.3} that $a_k\in\{0,1\}$. Since $b_k\in\{0,1\}$ and $b_k=1$ only if $a_k=1$, we conclude that, for $0\le k\le i-2$, the coefficient $a_k-b_k$ in the polynomial expression of $|S_{i}^\star(v) \cap S_{j_0}^{\ast}(w) \big |$ is also either $0$ or $1$. Moreover, the coefficient $a_{i-1}+1$ in this polynomial expression is $1$ or $2$. More precisely, from Proposition~\ref{propo 2.3}, we have $a_{i-1}+1=2$ if and only if $G=B(d,D)$ and $v_i=v_{i+1}=\cdots=v_D$.
\end{remark}

\begin{theorem}
\label{propo 2.9-v2-cas 2}
Let $v\in V$, $w\in S_1(v)$, and let $1\le i\le D$. Assume that $S_{i}^\star(v) \cap S_{i-1}^{\ast}(w)=\emptyset$. Then 
$S_{i}^\star(v)=S_{i}^\star(v)\cap S_{i}^{\ast}(w),$ 
and so  
$$
\big |S_{i}^\star(v) \cap S_{i}^{\ast}(w) \big |=\big |S_{i}^\star(v) \big |=d^i-a_{i-1}d^{i-1}-\ldots -a_{1} d-a_{0}.$$
\end{theorem}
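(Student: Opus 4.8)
The plan is to derive the theorem directly from Propositions~\ref{v2-first}, \ref{v2-second} and \ref{v2-second-cas 2}, together with the decomposition $S_i^\star(v)=\bigcup_{j=i-1}^{D}\bigl(S_i^\star(v)\cap S_j^\star(w)\bigr)$ recalled just before Proposition~\ref{v2-first}. The first step is to exploit part~(3) of Proposition~\ref{v2-second} (or of Proposition~\ref{v2-second-cas 2} when $d=2$): its concluding clause says precisely that the hypothesis $S_i^\star(v)\cap S_{i-1}^\star(w)=\emptyset$ forces $S_i^\star(v)\cap S_i^\star(w)\ne\emptyset$. In particular at least one of the sets $S_i^\star(v)\cap S_j^\star(w)$ with $i\le j\le D$ is nonempty, so by Proposition~\ref{v2-first} we are in its case~(1); that proposition then provides a \emph{unique} such index $j_0$, and since $S_i^\star(v)\cap S_i^\star(w)\ne\emptyset$ this index must be $j_0=i$.

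It then remains only to read off the conclusion. Substituting $j_0=i$, together with the hypothesis $S_i^\star(v)\cap S_{i-1}^\star(w)=\emptyset$, into case~(1) of Proposition~\ref{v2-first} gives $S_i^\star(v)=S_i^\star(v)\cap S_i^\star(w)$; equivalently, in the decomposition above all terms vanish except the one with $j=i$. The equality of cardinalities $|S_i^\star(v)\cap S_i^\star(w)|=|S_i^\star(v)|$ is then immediate, and the polynomial expression $d^i-a_{i-1}d^{i-1}-\cdots-a_1d-a_0$ is exactly the one supplied by Proposition~\ref{propo 2.3}.

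I do not expect a genuine obstacle here, since all the substantive work has already been done in the preceding propositions; the only point requiring attention is to invoke the correct version of part~(3) according to whether $d\ge 3$ or $d=2$ (both versions having the same statement). If that concluding clause were not at hand, the real difficulty would shift to proving $S_i^\star(v)\cap S_i^\star(w)\ne\emptyset$ directly: in the configuration it forces --- namely $G=B(d,D)$ with $v_i=\cdots=v_D=w_D$, which makes $S_i(v)=S_i(w)$ --- one would have to choose the $i$ free trailing symbols of a suitable vertex of $S_i(v)$ so as to escape every $S_k(v)\cup S_k(w)$ with $k<i$, and verifying that such a choice exists is where the work would lie.
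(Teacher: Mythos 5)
Your proof is correct and follows exactly the paper's own argument: invoke statement (3) of Propositions~\ref{v2-second} and \ref{v2-second-cas 2} to get $S_i^\star(v)\cap S_i^\star(w)\ne\emptyset$, identify $j_0=i$ as the unique index from Proposition~\ref{v2-first}, apply its case (1), and read off the polynomial from Proposition~\ref{propo 2.3}. No gaps.
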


\begin{theorem}
\label{propo 2.9-v2-cas 3}
Let $v\in V$, $w\in S_1(v)$, and let $1\le i\le D$. Assume that $S_{i}^\star(v) \cap S_{j}^{\star}(w)=\emptyset$ for all $i\le j\le D$. Then $S_i^\star(v)= S_{i}^\star(v) \cap S_{i-1}^{\star}(w)$ and 
$$
\big |S_{i}^\star(v) \cap S_{i-1}^{\ast}(w) \big |= \big |S_{i}^\star(v) \big |=d^i-a_{i-1}d^{i-1}-\ldots -a_{1} d-a_{0},
$$
where, in this case, we have $d=2$ and $a_{i-1}=1$. Therefore $\left|S_{i}^\star(v)\cap S_{i-1}^{\ast}(w)\right|$ can be equivalently expressed as
$$
\big |S_{i}^\star(v) \cap S_{i-1}^{\ast}(w) \big |=d^{i-1}-a_{i-2}d^{i-2}\ldots -a_{1} d-a_{0}.
$$
\end{theorem}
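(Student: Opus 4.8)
The plan is to obtain the theorem by assembling the structural results already in place, since the stated hypothesis forces a very rigid situation. First I would note that the set identity $S_i^\star(v)=S_i^\star(v)\cap S_{i-1}^\star(w)$ is precisely case~(2) of Proposition~\ref{v2-first}: the decomposition $S_i^\star(v)=\bigcup_{j=i-1}^{D}\bigl(S_i^\star(v)\cap S_j^\star(w)\bigr)$ recorded just before that proposition collapses to its $j=i-1$ summand once every intersection with $j\ge i$ is empty. Taking cardinalities and substituting the expression for $|S_i^\star(v)|$ from Proposition~\ref{propo 2.3} then gives the first displayed equality of the statement.

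Next I would show $d=2$ by contraposition against Proposition~\ref{v2-second}. If $d\ge 3$, then part~(4) of that proposition asserts the existence of an integer $j$ with $i\le j\le D$ and $S_i^\star(v)\cap S_j^\star(w)\ne\emptyset$, contradicting the hypothesis. Since $B(d,D)$ and $K(d,D)$ are $d$-regular with $d\ge 2$, this leaves only $d=2$.

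With $d=2$ available, I would then invoke Proposition~\ref{v2-second-cas 2}: its part~(4) characterizes the vanishing of $S_i^\star(v)\cap S_j^\star(w)$ for all $j$ with $i\le j\le D$ as the condition ``$G=B(d,D)$ and $v_i=v_{i+1}=\cdots=v_D\ne w_D$''. Our hypothesis is exactly the left-hand side, so in particular $G=B(2,D)$ and $v_i=v_{i+1}=\cdots=v_D$; feeding this into the ``in particular'' clause of Proposition~\ref{propo 2.3} (which equates $a_{i-1}=1$ with ``$G=B(d,D)$ and $v_i=\cdots=v_D$'') yields $a_{i-1}=1$. The final reformulation is then arithmetic: with $d=2$ and $a_{i-1}=1$ we have $d^i-a_{i-1}d^{i-1}=2^i-2^{i-1}=2^{i-1}=d^{i-1}$, so the two leading terms of $d^i-a_{i-1}d^{i-1}-\cdots-a_0$ merge into $d^{i-1}$, leaving $d^{i-1}-a_{i-2}d^{i-2}-\cdots-a_1d-a_0$.

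There is no genuine technical obstacle in this deduction; the work has already been done in Propositions~\ref{v2-first}, \ref{v2-second}, \ref{v2-second-cas 2} and \ref{propo 2.3} and the technical lemmas behind them. The one point needing care is the logical direction: Proposition~\ref{v2-second}(4) is a statement valid for $d\ge 3$, and it is used here contrapositively to exclude $d\ge 3$, after which one must check that the hypotheses of Proposition~\ref{v2-second-cas 2}(4) hold verbatim so that its equivalence can be applied.
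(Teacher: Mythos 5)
Your proposal is correct and follows essentially the same route as the paper's own proof: the set identity via Proposition~\ref{v2-first}(2), the exclusion of $d\ge 3$ via the contrapositive of Proposition~\ref{v2-second}(4), the deduction $a_{i-1}=1$ by combining Proposition~\ref{v2-second-cas 2}(4) with the ``in particular'' clause of Proposition~\ref{propo 2.3}, and the final arithmetic merge $d^i-d^{i-1}=d^{i-1}$ for $d=2$. The only difference is the order of the first two steps, which is immaterial.
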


 \begin{remark}
 Observe that under the hypothesis of Theorem~\ref{propo 2.9-v2-cas 3} we have the inequality $\left |S_{i}^\star(v) \cap S_{i-1}^{\ast}(w) \right |= \left |S_{i}^\star(v) \right |\le d^{i-1}$ (where $d=2$), as it should be, because $\left |S_{i}^\star(v) \cap S_{i-1}^{\ast}(w) \right |\le \left |S_{i-1}^{\ast}(w) \right |\le\left |S_{i-1}(w) \right |=d^{i-1}$.
 \end{remark}

\subsection{Application to deflection routing}
\label{application}
The authors proposed in \cite{fm} an analytical model for evaluating the performance of deflection routing schemes under different deflection criteria. In that model, a Markov chain is defined with states $0, 1, \ldots, D,$ corresponding to the possible distances that a packet may be to its destination ($D$ stands for the diameter of the network), and such that the transition probabilities depend on the deflection criteria and the network topology. 

In this paper, we determine for the case of $B(d,D)$ and $K(d,D)$ the following two probabilities that appear in the formulation \cite{fm}:
\begin{itemize}
\item \emph{Input probability} $\P_{\mathsf{in}}(i)$: 
Given a vertex $v$ selected uniformly at random, let $\P_{\mathsf{in}}(i)$ be the probability that another distinct vertex $v'$, also selected uniformly at random, be at distance $i$ from $v$.

\item \emph{Transition probability} $\P_{\mathsf{t}}(i,j)$: Suppose that a packet with destination vertex $z$ is deflected when visiting an intermediate vertex at a distance $i$ to $z$. We denote by $\P_{\mathsf{t}}(i,j)$ the probability that the new distance to $z$ (after the deflection has occurred) be $j$.
\end{itemize}

This subsection applies our results on the distance-layer structure of $B(d, D)$ and $K(d, D)$ to obtain explicit rational expressions, in terms of the degree $d$, for these probabilities. We provide the proofs of the corresponding propositions and theorems in Section~4. 

To calculate $\P_{\mathsf{in}}(i)$ and $\P_{\mathsf{t}}(i,j)$ we need to introduce a suitable partition of the vertex set of the digraph, classifying the vertices according to their sequence representation. In this way, we consider in $V$ an equivalence relation  $\sim$ defined by $v=v_1 v_2\ldots v_D\sim v'=v'_1 v'_2\ldots v'_D$  if and only if there exists a permutation $\sigma$ of the symbol alphabet $A$ such that  $\sigma(v_k)=v'_k$, $1\le k\le D$. Notice that  two equivalent vertices have a sequence representation with the same number $s$ of distinct symbols, where $1\le s\le\min{(d,D)}$ if $G=B(d,D)$  and $2\le s\le\min{(d+1,D)}$ if $G=K(d,D)$. Moreover, let $n_s$ be the number of equivalence classes in which  the number of distinct symbols in the sequence representation of the  vertices is $s$. Therefore, the partition of $V$ induced by the relation $\sim$ is
\begin{equation}
\label{part}
V=\bigcup_s \left(V_{s,1}\cup\cdots\cup V_{s,n_s}\right),
\end{equation}
where $V_{s,1}\cup\cdots\cup V_{s,n_s}$ is the set of vertices having $s$ distinct symbols in their sequence representation. Observe that $|V_{s,j}|$, $1\le j\le n_s$, has the polynomial expression
$$|
V_{s,j}|= \left\{ \begin{array}{ll}
d(d-1)\cdots (d-s+1) & \textrm{if } G=B(d,D), \\[2mm]
(d+1)d(d-1)\cdots (d-s+2) & \textrm{if } G=K(d,D).
\end{array}\right .
$$
Since $|V|=\sum_s\sum_j |V_{s,j}|$, we get
$$
\left\{\begin{array}{ll}
\displaystyle\sum_{s=1}^{\min{(d,D)}} n_s\, d(d-1)\cdots (d-s+1)=d^{D},& \textrm{if } G=B(d,D);\\[6mm]
\displaystyle\sum_{s=2}^{\min{(d+1,D)}} n_s\, (d+1)d(d-1)\cdots (d-s+2)=d^{D}+d^{D-1}, & \textrm{if } G=K(d,D). 
\end{array}\right.
$$
The parameter $n_s$ is independent of the cardinality of the symbol alphabet $A$, or, equivalently, $n_s$ is independent of the degree $d$ of the digraph. Taking into account this fact and  evaluating the above identities for $d=1,2,\ldots$, the values of $n_s$ can be recursively computed. For instance, if $G=B(d,D)$, then the first nonzero values of $n_s$ are $n_1=1$, $n_2=(2^D-1)/2$, $n_3=(3^{D-1}-2^D+1)/2$, $\ldots$; and if $G=K(d,D)$, then $n_2=1$, $n_3=2^{D-2}-1$, $n_4=(3^{D-2}-2^{D-1}+1)/2$, $\ldots$. Therefore, the total number of classes in the partition (\ref{part}), $l=\sum_s n_s$, is independent of the degree $d$ of $G$ and  can be calculated in terms of the diameter $D$. From now on, for simplicity, we will also denote the partition (\ref{part}) as
\begin{equation}
\label{part-2}
V=\mathcal{V}_1\cup\cdots\cup\mathcal{V}_l,
\end{equation}
and we will use both (\ref{part}) and (\ref{part-2}).

At this point, using the partition and the layer structure of the digraph, we present our results on input and transition probabilities. These results, Theorem~\ref{p_in} and Theorem~\ref{p_ij}, will be proved in  Subsection~\ref{proof p_in} and Subsection~\ref{proof p_ij}, respectively.

Expressing the input probability as $\sum_r \P_{\mathsf{in}}(i\mid v \in \mathcal{V}_r)\, \P(v \in \mathcal{V}_r)$ we obtain the following result that provides a rational description of $\P_{\mathsf{in}}(i)$ in terms of the degree $d$ of the digraph.
\begin{theorem}
\label{p_in}
For any choice of the vertices $v^{(1)},\ldots, v^{(l)}$, where $v^{(r)}\in\mathcal{V}_r$, the input probability $\P_{\mathsf{in}}(i)$ is given by
$$
\P_{\mathsf{in}}(i)=\sum_{r=1}^l \frac{|S_{i}^\star\left(v^{(r)}\right)|}{(|V|-1)}\cdot\frac{|\mathcal{V}_r|}{|V|},
$$
and has the following rational expression:
$$
\P_{\mathsf{in}}(i)=\sum_{r=1}^l\frac{ |\mathcal{V}_r|}{|V|(|V|-1)}
\left ( d^i-a^{(r,i)}_{i-1}d^{i-1}-\cdots -a^{(r,i)}_{1} d-a^{(r,i)}_{0} \right),
$$
where  $a^{(r,i)}_{k}\in\{0,1\}$. More precisely, $a^{(r,i)}_{k}=1$ if and only if $S_{k,i}\left(v^{(r)}\right)\neq\emptyset$; if and only if $v^{(r)}_{[k+1,D-(i-k)]}=v^{(r)}_{[i+1,D]}$ and $v^{(r)}_{[k+1,D-(j-k)]}\ne v^{(r)}_{[j+1,D]}$ for all $j$, $k<j< i$.
\end{theorem}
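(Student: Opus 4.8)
The plan is to split the argument into a probabilistic reduction, a substitution step using Proposition~\ref{propo 2.3}, and a combinatorial translation of the condition $S_{k,i}(v)\neq\emptyset$ into the stated condition on subsequences.

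\emph{Reduction.} First I would observe that $|S_i^\star(v)|$ is constant on each class $\mathcal V_r$. Indeed, every permutation $\sigma$ of the alphabet $A$ induces an automorphism of $G$: it sends the arc from $v=v_1\cdots v_D$ to $v_2\cdots v_Dv_{D+1}$ onto the arc from $\sigma(v)$ to $\sigma(v_2)\cdots\sigma(v_D)\sigma(v_{D+1})$, and injectivity of $\sigma$ respects the Kautz constraint $v_{D+1}\neq v_D$; hence it preserves distances. Since $\mathcal V_r$ is precisely an orbit under these automorphisms, $|S_i^\star(v)|=|S_i^\star(v^{(r)})|$ for all $v\in\mathcal V_r$ (alternatively this is visible directly from Proposition~\ref{propo 2.3} once the combinatorial description below is in hand, since the $a_k$ depend only on the equality pattern of the $v_k$). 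Now choose $v$ uniformly in $V$ and $v'$ uniformly in $V\setminus\{v\}$. For fixed $v$ and $i\geq 1$ we have $\P(d(v,v')=i\mid v)=|S_i^\star(v)|/(|V|-1)$ because $v\notin S_i^\star(v)$; averaging over $v$ and grouping the $|V|$ vertices by class gives
$$\P_{\mathsf{in}}(i)=\frac{1}{|V|}\sum_{v\in V}\frac{|S_i^\star(v)|}{|V|-1}=\frac{1}{|V|(|V|-1)}\sum_{r=1}^l|\mathcal V_r|\,|S_i^\star(v^{(r)})|=\sum_{r=1}^l\frac{|S_i^\star(v^{(r)})|}{|V|-1}\cdot\frac{|\mathcal V_r|}{|V|},$$
which is the first displayed identity (in the form $\sum_r\P_{\mathsf{in}}(i\mid v\in\mathcal V_r)\,\P(v\in\mathcal V_r)$ with $\P_{\mathsf{in}}(i\mid v\in\mathcal V_r)=|S_i^\star(v^{(r)})|/(|V|-1)$ and $\P(v\in\mathcal V_r)=|\mathcal V_r|/|V|$). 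Substituting the polynomial expression $|S_i^\star(v^{(r)})|=d^i-a^{(r,i)}_{i-1}d^{i-1}-\cdots-a^{(r,i)}_0$ from Proposition~\ref{propo 2.3}, with $a^{(r,i)}_k\in\{0,1\}$ and $a^{(r,i)}_k=1$ iff $S_{k,i}(v^{(r)})\neq\emptyset$, yields the second displayed rational expression.

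\emph{Combinatorial translation (the heart).} Fix $v=v^{(r)}$. Using the sequence description $S_k(v)=\{u:\ u_{[1,D-k]}=v_{[k+1,D]}\}$ recalled in Subsection~\ref{distance-layer}, one checks that for $k\leq j\leq D$ the inclusion $S_k(v)\subseteq S_j(v)$ holds if and only if $v_{[k+1,D-(j-k)]}=v_{[j+1,D]}$, i.e.\ if and only if the word $v_{[k+1,D]}$ has period $j-k$ (in the Kautz case the ``consecutive symbols differ'' constraint restricts only the free tail symbols, and does so equally on both sides of the inclusion, so this equivalence is unaffected). By Definition~\ref{def_S_kj}, $S_{k,i}(v)\neq\emptyset$ iff (a) $S_k(v)\subseteq S_i(v)$ and (b) for every $j$ with $k<j<i$ such that $S_j(v)\subseteq S_i(v)$ one has $S_k(v)\not\subseteq S_j(v)$. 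Condition (a) is exactly $v_{[k+1,D-(i-k)]}=v_{[i+1,D]}$. Granted (a), I claim (b) is equivalent to $v_{[k+1,D-(j-k)]}\neq v_{[j+1,D]}$ for all $j$ with $k<j<i$. One implication is immediate. For the converse, suppose some such $j$ had $S_k(v)\subseteq S_j(v)$; then the word $v_{[k+1,D]}$ would have periods $p=j-k<q=i-k$, and by the elementary fact that deleting the first $p$ letters of a word with periods $p<q$ produces a word with period $q-p$, the word $v_{[j+1,D]}$ would have period $i-j$, i.e.\ $S_j(v)\subseteq S_i(v)$, contradicting (b). This gives the claimed characterization of $a^{(r,i)}_k$.

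\emph{Where the difficulty lies.} The probabilistic steps are routine bookkeeping; the only real content is the combinatorial translation, and within it the converse of (b), which rests on the Fine--Wilf-type periodicity observation. I expect this (together with the sequence description of the inclusions $S_k(v)\subseteq S_j(v)$) to be supplied by the technical lemmas of Subsections~\ref{technical-lemmas} and \ref{technical-lemmas-defl}. I would also check the boundary cases $i=1$ (no intermediate $j$, so only (a) matters) and $j=D$ separately, and confirm that the period-$1$ case $k=i-1$ of this characterization matches the special clause of Proposition~\ref{propo 2.3} ($a_{i-1}=1$ iff $G=B(d,D)$ and $v_i=\cdots=v_D$).
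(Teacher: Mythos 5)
Your proposal is correct and follows essentially the same route as the paper: class-constancy of $|S_i^\star(v)|$ via alphabet permutations (the paper's Lemma~\ref{tech-11}), conditioning on the classes $\mathcal{V}_r$ to get the first formula, substituting Proposition~\ref{propo 2.3}, and then the combinatorial characterization of $S_{k,i}(v)\neq\emptyset$, which is exactly the paper's Remark~\ref{rem_S_kj-seq}. The only cosmetic difference is that you justify that characterization by a Fine--Wilf-style periodicity computation, whereas the paper gets it in one line from the containment-or-disjointness dichotomy of Lemma~\ref{tech-2} (Remark~\ref{rem_S_kj-bis}); both arguments are sound, and the $i=D$ Kautz boundary case you flag is glossed over by the paper as well.
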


 \begin{example}
 \label{Pin K(d,10)}
 Let $G=K(d,10)$ and let $\mathcal{V}_r$ be the equivalence class of vertices which sequence representation is $v=\alpha\beta\gamma\alpha\beta\gamma\alpha\beta\alpha\beta$, where $\alpha$, $\beta$ and $\gamma$ are distinct symbols. As seen in Examples~\ref{example K(d,10)} and \ref{both}, if $v\in\mathcal{V}_r$, then $|S_8^\star(v)|=d^8-d^6-d^3-1$. Since the number of vertices of $K(d,10)$ is $|V|=d^{10}+d^9$, the probability that a  vertex selected uniformly at random  from $V\setminus\{v\}$ is at distance $8$ from $v$ is
 $$
 \P_{\mathsf{in}}(8\mid v\in\mathcal{V}_r)=\frac{|S_{8}^\star(v_r)|}{|V|-1}=\frac{d^8-d^6-d^3-1}{d^{10}+d^9-1}.
 $$
 \end{example}

 \begin{example}
 \label{ex-K(d,4)}
 As a second example let us calculate all the input probabilities $\P_{\mathsf{in}}(i)$, $1\le i\le 4$, in the Kautz digraph $K(d,4)$ of degree $d$ and diameter $D=4$. In $K(d,4)$ the numbers $n_s$ of equivalence classes for which the the sequence representation of its vertices contains $s$ distinct symbols is $n_2=1$, $n_3=2^{D-2}-1=3$, and $n_4=(3^{D-2}-2^{D-1}+1)/2=1$. Hence (as mentioned in Example~\ref{example K(d,4)}) we have $l=5$ different vertex classes, namely
 \begin{equation*}
 \label{cl-K(d,4)}
 \begin{array}{l} 
 \mathcal{V}_1=V_{2,1}=\{v\in V:\ v=\alpha\beta\alpha\beta\},\\[1mm] 
 \mathcal{V}_2=V_{3,1}=\{v\in V:\ v=\alpha\beta\alpha\gamma\}, \\[1mm] 
 \mathcal{V}_3=V_{3,2}=\{v\in V:\ v=\alpha\beta\gamma\alpha\},\\ [1mm] 
 \mathcal{V}_4=V_{3,3}=\{v\in V:\ v=\alpha\beta\gamma\beta\},\\[1mm]  
 \mathcal{V}_5=V_{4,1}=\{v\in V:\ v=\alpha\beta\gamma\delta\},
 \end{array}
 \end{equation*}
 with respective cardinalities $|\mathcal{V}_1|=(d+1)d$, $|\mathcal{V}_2|=|\mathcal{V}_3|=|\mathcal{V}_4|=(d+1)d(d-1)$, and $|\mathcal{V}_5|=(d+1)d(d-1)(d-2)$. Using the values $a_k^{(r,i)}$ detailed in Table~\ref{table 1} and applying Theorem~\ref{p_in} we obtain the input probabilities summarized in Table~\ref{table 2}.

 \begin{table}[ht]
 \caption{Input probabilities in $K(d,4)$}
 \label{table 2}
 \renewcommand{\arraystretch}{1.5}
 \begin{center}
 \begin{tabular}{|c|c|}
 \hline
 $i$&$\P_{\mathsf{in}}(i)$\\[2.5mm] \hline
 $1$&$\displaystyle\frac{d}{d^4+d^3-1}$\\[3mm]  \hline
 $2$&$\displaystyle\frac{d^4-1}{d^6+d^5-d^2}$ \\[3mm]  \hline
 $3$&$\displaystyle\frac{d^5-d^2-d+1}{d^6+d^5-d^2}$ \\[3mm]  \hline
 $4$&$\displaystyle\frac{d^5-d^3-d^2+1}{d^5+d^4-d}$ \\[3mm]  \hline
 \end{tabular}
 \end{center}
 \end{table}
\end{example}

The transition probability $\P_{\mathsf{t}}(i,j)$ can also be calculated as $\P_{\mathsf{t}}(i,j)=\sum_r \P_{\mathsf{t}}(i,j \mid v\in \mathcal{V}_r)\, \P(v\in \mathcal{V}_r)$. In this sum, $\P_{\mathsf{t}}(i,j \mid  v\in \mathcal{V}_r)$ is the conditional probability that the new distance to destination be $j$, given that a deflection occurs when  visiting a vertex $v$ at distance $i$ to the destination and belonging to the class $\mathcal{V}_r$; whereas $\P(v\in \mathcal{V}_r)$ is the probability that the vertex at which deflection occurs be in $\mathcal{V}_r$. Moreover, we will prove that
\begin{equation}
\label{dfl-1}
\P_{\mathsf{t}}(i,j\mid v\in \mathcal{V}_r)=\frac{1}{d-1}\cdot\frac{\left|S_i^\star\left(v\right)\cap S_{j}^\star\left(w_v\right)\right|}{\left|S_i^\star\left(v\right)\right|}\left(1-\frac{|S_i^\star(v)\cap S_{i-1}^\star(w_v)|}{|S_i^\star(v)|}\right),
\end{equation} 
where $w_v$ is a precise vertex, adjacent from $v$, uniquely determined by $v$ and the distances $i$ and $j$. In this way, we obtain the following result.

\begin{theorem} 
\label{p_ij}
The transition probabilities $\P_{\mathsf{t}}(i,j)$, $1\le i\le j< D$, are given by 
\begin{equation*}
\P_{\mathsf{t}}(i,j) = \frac{1}{(d-1)|V|}\sum_{r} |\mathcal{V}_r|\, p^{(r,i,j)}\left(1-q^{(r,i)}\right),
\end{equation*}
where $p^{(r,i,j)}$ and $q^{(r,i)}$ are rational fractions in the degree $d$ of the form
$$
p^{(r,i,j)} =k^{(r,i,j)}\cdot \frac{\, d^i-\alpha^{(r,i)}_{i-1}d^{i-1}-\cdots -\alpha^{(r,i)}_{1} d-\alpha^{(r,i)}_{0}  }{d^i-a^{(r,i)}_{i-1}d^{i-1}-\cdots -a^{(r,i)}_{1} d-a^{(r,i)}_{0}}
$$
and
$$
q^{(r,i)}  =\kappa^{(r,i)}\cdot \frac{d^{i-1}-b^{(r,i)}_{i-2}d^{i-2}-\ldots -b^{(r,i)}_{1} d-b^{(r,i)}_{0}}{d^i-a^{(r,i)}_{i-1}d^{i-1}-\cdots -a^{(r,i)}_{1} d-a^{(r,i)}_{0}},
$$
 and the coefficients of these rational fractions are $0$, $1$ or $2$. Namely, $k^{(r,i,j)}, \kappa^{(r,i)} \in\{0,1\}$; $\alpha^{(r,i)}_{i-1}\in\{0,1,2\}$;  $a^{(r,i)}_{i-1}\in\{0,1\}$; and $\alpha^{(r,i)}_{l}, a^{(r,i)}_{l}, b^{(r,i)}_{l} \in\{0,1\}$ for $ 0\le l\le i-2$.
\end{theorem}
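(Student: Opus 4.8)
The plan is to derive the formula from the conditional identity \eqref{dfl-1} together with the polynomial descriptions of Subsection~\ref{sub: layer}: the substantive input is \eqref{dfl-1}, and the rest is a substitution followed by a coefficient check. First I would record the symmetry that justifies working class by class. A permutation $\sigma$ of the base alphabet acts symbol‑wise on sequence representations and induces an automorphism of $G$ that preserves all distances and carries $S_1(v)$ to $S_1(\sigma(v))$; hence $\P_{\mathsf{t}}(i,j\mid v\in\mathcal{V}_r)$ depends on $v$ only through its $\sim$‑class and equals its value at any chosen representative $v^{(r)}\in\mathcal{V}_r$. Taking the deflection vertex uniform over $V$, so that $\P(v\in\mathcal{V}_r)=|\mathcal{V}_r|/|V|$, the law of total probability gives $\P_{\mathsf{t}}(i,j)=\frac{1}{|V|}\sum_r|\mathcal{V}_r|\,\P_{\mathsf{t}}(i,j\mid v\in\mathcal{V}_r)$.

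Next I would substitute \eqref{dfl-1} and set $p^{(r,i,j)}=|S_i^\star(v^{(r)})\cap S_j^\star(w_{v^{(r)}})|/|S_i^\star(v^{(r)})|$ and $q^{(r,i)}=|S_i^\star(v^{(r)})\cap S_{i-1}^\star(w_{v^{(r)}})|/|S_i^\star(v^{(r)})|$; this reproduces exactly the claimed sum, so everything reduces to bringing $p^{(r,i,j)}$ and $q^{(r,i)}$ into the asserted rational form. Here $w_{v^{(r)}}$ is the out‑neighbour of $v^{(r)}$ picked out in \eqref{dfl-1}, depending also on $i$ and $j$: from the distance formula recalled in Subsection~\ref{distance-layer} and the uniqueness of shortest paths it is the unique out‑neighbour through which a deflection can move a destination from distance $i$ to distance $j$, namely $w_{v^{(r)}}=v^{(r)}_{[2,D]}\,v^{(r)}_{i+D-j}$ whenever this is a genuine vertex ($j=i$ is the exceptional case in $K(d,D)$, where no out‑neighbour qualifies and the corresponding term is $0$).

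For the polynomial forms I would feed $w_{v^{(r)}}$ into the distance‑layer results. The common denominator is $|S_i^\star(v^{(r)})|=d^i-a^{(r,i)}_{i-1}d^{i-1}-\cdots-a^{(r,i)}_0$ by Proposition~\ref{propo 2.3}, with $a^{(r,i)}_k\in\{0,1\}$ and $a^{(r,i)}_{i-1}=1$ exactly when $G=B(d,D)$ and $v^{(r)}_i=\cdots=v^{(r)}_D$. I would then split on whether $S_i^\star(v^{(r)})\cap S_{i-1}^\star(w_{v^{(r)}})$ is empty, which by Proposition~\ref{v2-second}(3) (Proposition~\ref{v2-second-cas 2}(3) when $d=2$) happens precisely in that same special case; a short observation specific to this $w_{v^{(r)}}$, namely that its last symbol lies in $\{v^{(r)}_i,\dots,v^{(r)}_D\}$, rules out the branch in which all the intersections $S_i^\star(v^{(r)})\cap S_j^\star(w_{v^{(r)}})$, $j\ge i$, are empty (so Theorem~\ref{propo 2.9-v2-cas 3} is never triggered here), leaving a unique $j_0$, $i\le j_0\le D$, with $S_i^\star(v^{(r)})\cap S_{j_0}^\star(w_{v^{(r)}})\ne\emptyset$ (Proposition~\ref{v2-first}, with Proposition~\ref{v2-second}(4) or \ref{v2-second-cas 2}(4)). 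If $S_i^\star(v^{(r)})\cap S_{i-1}^\star(w_{v^{(r)}})\ne\emptyset$, Theorem~\ref{propo 2.9-v2-cas 1} expresses $|S_i^\star(v^{(r)})\cap S_{i-1}^\star(w_{v^{(r)}})|$ as $d^{i-1}-b^{(r,i)}_{i-2}d^{i-2}-\cdots-b^{(r,i)}_0$ with $b^{(r,i)}_k\in\{0,1\}$ (and $b^{(r,i)}_k=1$ iff $a^{(r,i)}_k=1$ and $v^{(r)}_{D-i+k+1}=(w_{v^{(r)}})_D$), and gives $|S_i^\star(v^{(r)})\cap S_{j_0}^\star(w_{v^{(r)}})|$ with leading coefficient $a^{(r,i)}_{i-1}+1$ and remaining coefficients $a^{(r,i)}_l-b^{(r,i)}_l$; dividing by the denominator, this is $q^{(r,i)}$ with $\kappa^{(r,i)}=1$, and $p^{(r,i,j)}$ with $k^{(r,i,j)}=1$, $\alpha^{(r,i)}_{i-1}=a^{(r,i)}_{i-1}+1$, $\alpha^{(r,i)}_l=a^{(r,i)}_l-b^{(r,i)}_l$ when $j=j_0$, and $k^{(r,i,j)}=0$ otherwise. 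If $S_i^\star(v^{(r)})\cap S_{i-1}^\star(w_{v^{(r)}})=\emptyset$, then $q^{(r,i)}=0$ ($\kappa^{(r,i)}=0$), and Theorem~\ref{propo 2.9-v2-cas 2} forces $j_0=i$ with $S_i^\star(v^{(r)})\cap S_i^\star(w_{v^{(r)}})=S_i^\star(v^{(r)})$, so $k^{(r,i,i)}=1$ with numerator equal to denominator and $k^{(r,i,j)}=0$ for $j\ne i$. A final check that in each branch $k^{(r,i,j)},\kappa^{(r,i)}\in\{0,1\}$, $a^{(r,i)}_{i-1}\in\{0,1\}$, $\alpha^{(r,i)}_{i-1}\in\{0,1,2\}$ and $\alpha^{(r,i)}_l,a^{(r,i)}_l,b^{(r,i)}_l\in\{0,1\}$ for $0\le l\le i-2$ (using $b^{(r,i)}_l\le a^{(r,i)}_l$) completes the argument.

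The main obstacle is \eqref{dfl-1} itself: one must model the deflection step carefully enough to see that a single out‑neighbour $w_v$ of $v$ carries all the probability mass at target distance $j$, contributing exactly for destinations in $S_i^\star(v)\cap S_j^\star(w_v)$, while the deflections ``wasted'' on destinations in $S_i^\star(v)\cap S_{i-1}^\star(w_v)$ — for which $w_v$ is in fact the correct next hop — are exactly the ones removed by the factor $1-q^{(r,i)}$; this rests on the uniqueness of shortest paths in $B(d,D)$ and $K(d,D)$ and on the explicit form of $w_v$, and it is where the $K(d,D)$ edge case $j=i$ (no qualifying out‑neighbour, consistently with $\P_{\mathsf{t}}(i,i)=0$ there) gets disposed of. What remains is careful bookkeeping: verifying that the degenerate situations — numerator equal to denominator, $p^{(r,i,j)}$ or $q^{(r,i)}$ vanishing, or $w_v$ failing to be a vertex — are precisely what the indicators $k^{(r,i,j)},\kappa^{(r,i)}$ and the extra unit permitted in $\alpha^{(r,i)}_{i-1}$ absorb, and that the hypothesis $j<D$ is what keeps us inside the clean regimes of Theorems~\ref{propo 2.9-v2-cas 1}--\ref{propo 2.9-v2-cas 3}, the boundary value $j=D$ being treated separately in Subsection~\ref{proof p_iD}.
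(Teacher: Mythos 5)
Your proposal follows essentially the same route as the paper: condition on the vertex class via the permutation symmetry (the paper's Lemma~\ref{tech-11}), derive the conditional identity (\ref{dfl-1}) by combining $\P_{\mathsf{t}}(i,j\mid v,w)=|S_i^\star(v)\cap S_j^\star(w)|/|S_i^\star(v)|$ with $\P(w\mid v)=\frac{1}{d-1}\bigl(1-|S_i^\star(v)\cap S_{i-1}^\star(w)|/|S_i^\star(v)|\bigr)$ and the uniqueness of the qualifying out-neighbour $w_v=v_2\cdots v_Dv_{i+(D-j)}$ (Lemma~\ref{tech-3}), and then substitute the polynomial expressions from Proposition~\ref{propo 2.3} and Theorems~\ref{propo 2.9-v2-cas 1} and \ref{propo 2.9-v2-cas 2}. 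The argument is correct; your added remark that Theorem~\ref{propo 2.9-v2-cas 3} can never be triggered for this particular $w_v$ (since $(w_v)_D=v_{i+D-j}\in\{v_i,\dots,v_D\}$) is a small clarification the paper leaves implicit, not a different approach.
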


\begin{remark}
In the proof of this theorem it will be shown how to determine the coefficients $k^{(r,i,j)}$, $\kappa^{(r,i)}$, $a^{(r,i)}_{k}$, $b^{(r,i)}_{k}$, and $\alpha^{(r,i)}_{k}$. More precisely, we will show that if $v^{(r)}$ is any representative vertex in the class $\mathcal{V}_r$, and if $w^{(r)}$ is the vertex adjacent from $v^{(r)}$ given by $w^{(r)}=v^{(r)}_2\cdots v^{(r)}_{D}v^{(r)}_{i+(D-j)}$, then 
\begin{itemize}
\item[(a)] $k^{(r,i,j)}=1$ if and only if $S_i^\star\left(v^{(r)}\right)\cap S_{j}^\star\left(w^{(r)}\right)\ne\emptyset$, as determined by statement (2) of Propositions~\ref{v2-second} and \ref{v2-second-cas 2};
\item[(b)] $\kappa^{(r,i)}=1$ if and only if $S_i^\star\left(v^{(r)}\right)\cap S_{i-1}^\star\left(w^{(r)}\right)\ne\emptyset$, as determined by statement (3) of Propositions~\ref{v2-second} and \ref{v2-second-cas 2};
\item[(c)] the coefficients $a^{(r,i)}_{k}\in\{0,1\}$ are determined from $v^{(r)}$ as in Proposition~\ref{propo 2.3};
\item[(d)] the coefficients  $\alpha^{(r,i)}_{k}$ and  $b^{(r,i)}_{k}$ are determined from $v^{(r)}$ and $w^{(r)}$ as in Theorems~\ref{propo 2.9-v2-cas 1} and \ref{propo 2.9-v2-cas 2}.
\end{itemize}
We stress that the values of all these coefficients are independent of the choice of $v^{(r)}$  in the class $\mathcal{V}_r$.
\end{remark}

The previous theorem provides a description of the probabilities $\P_{\mathsf{t}}(i,j)$ in the case $j<D$. Next we discuss the case $j=D$. Clearly we have $\P_{\mathsf{t}}(D,D)=1$, because $D$ is the maximum possible distance between the vertices of the digraph. Moreover, the transition probabilities $\P_{\mathsf{t}}(i,D)$, $1\le i< D$, can be obtained from Theorem~\ref{p_ij} because, for each $i$, we have $\P_{\mathsf{t}}(i,D)=1-\sum_{j=i}^{D-1}\P_{\mathsf{t}}(i,j)$. However, for the sake of completeness, we present in the following theorem a description of the transition probabilities $\P_{\mathsf{t}}(i,D)$, analogous to those provided in Theorem~\ref{p_ij} for $\P_{\mathsf{t}}(i,j)$.
\begin{theorem} 
\label{p_iD}
The transition probabilities $\P_{\mathsf{t}}(i,D)$, $1\le i < D$, are given by 
\begin{equation*}
\P_{\mathsf{t}}(i,D) = \frac{1}{(d-1) |V|}\sum_{r}\sum_{s=1}^{m_r} |\mathcal{V}_r|\, p^{(r,s,i)}\left(1-q^{(r,s,i)}\right),
\end{equation*}
where $m_r\in\{d-1,d\}$ if $G=K(d,D)$ and $m_r=d$ if $G=B(d,D)$, and where $p^{(r,s,i)}$ and $q^{(r,s,i)}$ are rational fractions in the degree $d$ of the form
$$
p^{(r,s,i)}=k^{(r,s,i)}\cdot \frac{\, d^i-\alpha^{(r,s,i)}_{i-1}d^{i-1}-\cdots -\alpha^{(r,s,i)}_{1} d-\alpha^{(r,s,i)}_{0}  }{d^i-a^{(r,s,i)}_{i-1}d^{i-1}-\cdots -a^{(r,s,i)}_{1} d-a^{(r,s,i)}_{0}}
$$
and
$$
q^{(r,s,i)}=\kappa^{(r,s,i)}\cdot \frac{d^{i-1}-b^{(r,s,i)}_{i-2}d^{i-2}-\ldots -b^{(r,s,i)}_{1} d-b^{(r,s,i)}_{0}}{d^i-a^{(r,s,i)}_{i-1}d^{i-1}-\cdots -a^{(r,s,i)}_{1} d-a^{(r,s,i)}_{0}},
$$
 where all the coefficients are $0$, $1$ or $2$. Namely, $k^{(r,s,i)}$, $\kappa^{(r,s,i)} \in\{0,1\}$; $\alpha^{(r,s,i)}_{i-1}\in\{0,1,2\}$;  $a^{(r,s,i)}_{i-1}\in\{0,1\}$; and $\alpha^{(r,s,i)}_{l}, a^{(r,s,i)}_{l}, b^{(r,s,i)}_{l} \in\{0,1\}$ for $ 0\le l\le i-2$. 
\end{theorem}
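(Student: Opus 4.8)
The plan is to treat $\P_{\mathsf{t}}(i,D)$ exactly as $\P_{\mathsf{t}}(i,j)$ was treated for $j<D$ in Theorem~\ref{p_ij}, but to account for the fact that the vertex $w$ adjacent from $v$ through which a deflected packet is rerouted to a destination at distance $D$ is no longer uniquely determined by $v$, $i$ and $j=D$. Indeed, for $j<D$ the identity $w_v=v_2\cdots v_D v_{i+(D-j)}$ pins down a single out-neighbour; but when $j=D$ the packet, after the deflection, may follow a shortest path of length $D$ to its destination, and for a destination at distance exactly $D$ from $v$ there can be several out-neighbours $w$ of $v$ with $S_i^\star(v)\cap S_D^\star(w)\neq\emptyset$. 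So the first step is to enumerate, for a representative $v^{(r)}\in\mathcal{V}_r$ and the given $i$, the out-neighbours $w^{(r,1)},\dots,w^{(r,m_r)}$ of $v^{(r)}$ that can carry a packet to a vertex at distance $D$; I would show that this number $m_r$ is $d$ when $G=B(d,D)$ and is $d$ or $d-1$ when $G=K(d,D)$ (the deficiency of one arising from the Kautz adjacency constraint $v_{D+1}\neq v_D$, which forbids one symbol and, depending on the sequence structure of $v^{(r)}$, may or may not coincide with a symbol that was already excluded).

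Next I would condition on the class and on which of these out-neighbours is used. Writing $\P_{\mathsf{t}}(i,D)=\sum_r \P(v\in\mathcal{V}_r)\,\P_{\mathsf{t}}(i,D\mid v\in\mathcal{V}_r)$ and then decomposing the inner conditional probability over the $m_r$ admissible out-neighbours, the analogue of~(\ref{dfl-1}) becomes
$$
\P_{\mathsf{t}}(i,D\mid v\in\mathcal{V}_r)=\frac{1}{d-1}\sum_{s=1}^{m_r}\frac{\bigl|S_i^\star(v)\cap S_D^\star(w^{(r,s)})\bigr|}{|S_i^\star(v)|}\left(1-\frac{|S_i^\star(v)\cap S_{i-1}^\star(w^{(r,s)})|}{|S_i^\star(v)|}\right),
$$
which is justified exactly as in Subsection~\ref{proof p_ij}: the factor $1/(d-1)$ is the probability the deflection picks a specified non-congested out-arc, the first quotient is the conditional probability that using $w^{(r,s)}$ leads to a vertex at distance $D$, and the parenthesised factor removes the event that the new distance is $i-1$ (which would mean no genuine deflection cost). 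Multiplying through by $|\mathcal{V}_r|/|V|$ and summing over $r$ gives the displayed formula with $p^{(r,s,i)}=|S_i^\star(v^{(r)})\cap S_D^\star(w^{(r,s)})|/|S_i^\star(v^{(r)})|$ and $q^{(r,s,i)}=|S_i^\star(v^{(r)})\cap S_{i-1}^\star(w^{(r,s)})|/|S_i^\star(v^{(r)})|$.

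It then remains to read off the shapes of $p^{(r,s,i)}$ and $q^{(r,s,i)}$. The denominator $|S_i^\star(v^{(r)})|=d^i-a^{(r,s,i)}_{i-1}d^{i-1}-\cdots-a^{(r,s,i)}_0$ is supplied by Proposition~\ref{propo 2.3} (it is independent of $s$, but I keep the superscript $s$ to match the statement). For the numerator of $p^{(r,s,i)}$: by Propositions~\ref{v2-first},~\ref{v2-second} and~\ref{v2-second-cas 2}, either $S_i^\star(v^{(r)})\cap S_D^\star(w^{(r,s)})=\emptyset$, in which case we set $k^{(r,s,i)}=0$, or it is nonempty, in which case $k^{(r,s,i)}=1$ and its cardinality has the polynomial form $d^i-\alpha^{(r,s,i)}_{i-1}d^{i-1}-\cdots-\alpha^{(r,s,i)}_0$ with $\alpha^{(r,s,i)}_{i-1}\in\{0,1,2\}$ and the lower coefficients in $\{0,1\}$, by Theorem~\ref{propo 2.9-v2-cas 1} (when $S_i^\star(v^{(r)})\cap S_{i-1}^\star(w^{(r,s)})\ne\emptyset$) or Theorem~\ref{propo 2.9-v2-cas 2} (when that intersection is empty, where $j_0=i$ forces $D=i$ only in the trivial boundary case, otherwise $j_0=D$). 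Likewise $q^{(r,s,i)}$ has numerator $d^{i-1}-b^{(r,s,i)}_{i-2}d^{i-2}-\cdots-b^{(r,s,i)}_0$ with $b$-coefficients in $\{0,1\}$ by Theorem~\ref{propo 2.9-v2-cas 1}, and $\kappa^{(r,s,i)}=1$ iff $S_i^\star(v^{(r)})\cap S_{i-1}^\star(w^{(r,s)})\ne\emptyset$, as characterised by statement~(3) of Propositions~\ref{v2-second} and~\ref{v2-second-cas 2}; when that intersection is empty one uses Theorem~\ref{propo 2.9-v2-cas 3} together with $\kappa^{(r,s,i)}=0$. Finally I would note that all these coefficients depend only on the $\sim$-class of $v^{(r)}$, since the sequence conditions governing them in Proposition~\ref{propo 2.3} and Theorems~\ref{propo 2.9-v2-cas 1}--\ref{propo 2.9-v2-cas 3} are invariant under permutation of the alphabet.

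The main obstacle is the first step: correctly identifying the set of out-neighbours $w^{(r,s)}$ of $v^{(r)}$ that can route a deflected packet to a vertex at distance exactly $D$, and pinning down $m_r\in\{d-1,d\}$ in the Kautz case. One has to argue carefully which vertices $z$ lie at distance $D$ from $v^{(r)}$ — using the distance characterisation recalled in Subsection~\ref{distance-layer}, $z$ is at distance $D$ from $v$ iff $v_D\ne z_1$ (in $B(d,D)$) or the analogous Kautz condition — and then, for each admissible first-step symbol, verify that the resulting out-neighbour genuinely contributes a nonempty intersection $S_i^\star(v^{(r)})\cap S_D^\star(w^{(r,s)})$ (for those $s$ with $k^{(r,s,i)}=1$) while still being counted (with $p^{(r,s,i)}=0$) when it does not. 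The combinatorial bookkeeping of the Kautz constraint — whether the forbidden symbol $v_D$ has already been spent — is exactly what produces the two possible values of $m_r$, and this is where a short case analysis on the structure of $v^{(r)}_{[1,?]}$ is needed; everything downstream is a direct appeal to the results of Subsection~\ref{sub: layer}.
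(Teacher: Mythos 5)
Your proposal is correct and follows essentially the same route as the paper: the paper likewise sums the single-neighbour formula over the set $\Gamma_{i,D}^+(v)=\{w\in S_1(v):S_i(v)\cap S_D(w)\ne\emptyset\}$ (its Lemma~\ref{tech-3-bis} establishes exactly your claim that this set has $d$ elements for $B(d,D)$ and $d$ or $d-1$ elements for $K(d,D)$ according to whether $v_{i+1}=v_D$), uses permutation-invariance to pass to class representatives, and reads off the rational forms from Proposition~\ref{propo 2.3}, Theorems~\ref{propo 2.9-v2-cas 1}--\ref{propo 2.9-v2-cas 2}, and statements (2)--(3) of Propositions~\ref{v2-second} and~\ref{v2-second-cas 2}. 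The only blemish is your parenthetical about Theorem~\ref{propo 2.9-v2-cas 2}: when $S_i^\star(v)\cap S_{i-1}^\star(w)=\emptyset$ and $i<D$ the unique nonempty layer is $j_0=i$, so $S_i^\star(v)\cap S_D^\star(w)=\emptyset$ and $k^{(r,s,i)}=0$ — which is consistent with the statement but not quite what you wrote.
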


 The next two examples illustrate the rational fractions $p^{(r,i,j)}$ and $q^{(r,i)}$, as well as the expressions of the transition probabilities $\P_{\mathsf{t}}(i,j)$ and $\P_{\mathsf{t}}(i,D)$, formulated in Theorems~\ref{p_ij} and \ref{p_iD}.

 \begin{example}
 \label{def-Kautz}
 Let $G=K(d,12)$ and consider the class of vertices $\mathcal{V}_r$ which sequence representation is $\alpha\beta\gamma\alpha\beta\gamma\alpha\beta\gamma\alpha\beta\gamma$, where $\alpha$, $\beta$ and $\gamma$ stand for  different symbols of the alphabet $A$. Suppose that $v\in\mathcal{V}_r$ is the vertex at which the deflection occurs and let $w=\beta\gamma\alpha\beta\gamma\alpha\beta\gamma\alpha\beta\gamma w_{12}$ be the vertex adjacent from $v$ through which this deflection takes place. Let us calculate, for instance, the transition probabilities $\P_{\mathsf{t}}(4,6\mid v\in\mathcal{V}_r)$ and $\P_{\mathsf{t}}(1,6\mid v\in\mathcal{V}_r)$. 

 Firstly, let us consider  $\P_{\mathsf{t}}(4,6\mid v\in\mathcal{V}_r)$. Observe that if this probability is not zero, then the destination vertex $z$ must belong to $S^\star_4(v)$ and also to $S^\star_6(w)$. Therefore, since $S^\star_4(v)\subseteq S_4(v)$ and $S^\star_6(v)\subseteq S_6(v)$, we conclude that $S_4(v)\cap S_6(w)\ne \emptyset$ is a necessary condition for $\P_{\mathsf{t}}(4,6\mid v\in\mathcal{V}_r)\ne 0$. Using the notation of Examples~\ref{example B(d,7)} and \ref{example K(d,10)}, we have $S_4(v)=\{u\in V:\, u=\beta\gamma\alpha\beta\gamma\alpha\beta\gamma\ast\ast\ast\ast\}$ and $S_6(w)=\{u\in V:\, u=\beta\gamma\alpha\beta\gamma w_{12} \ast\ast\ast\ast\ast\ast\}$.  From these sequence representations we can check that $S_4(v)\cap S_6(w)\ne \emptyset$ if and only if $S_4(v)\subseteq S_6(w)$; if and only if $w_{12}=\alpha$. We conclude that if $\P_{\mathsf{t}}(4,6\mid v\in\mathcal{V}_r)\ne 0$, then there is only one precise vertex $w$ adjacent from $v$ such that $d(w,z)=6$, namely $w=\beta\gamma\alpha\beta\gamma\alpha\beta\gamma\alpha\beta\gamma\alpha$.

 Now let us calculate $|S^\ast_4(v)|$, $|S^\ast_4(v)\cap S^\ast_6(w)|$, and $|S_4^\star(v)\cap S_{3}^\star(w)|$ in order to obtain $\P_{\mathsf{t}}(4,6\mid v\in\mathcal{V}_r)$ by applying (\ref{dfl-1}). We can verify from the sequence representations of $v$ and $w$ that the following chain of inclusions hold:
 \begin{equation}
 \label{chain}
 S_0(w)\subseteq S_1(v)\subseteq S_3(w)\subseteq S_4(v)\subseteq S_6(w).
 \end{equation}
 Furthermore, we can check that $S_k(v)\cap S_4(v)=\emptyset$ if $k=0,2,3$, and that $S_k(w)\cap S_6(w)=\emptyset$ if $k=1,2,4,5$. Then $S^\ast_4(v)=S_4(v)\setminus S_1(v)$, $S^\star_3(w)=S_3(w)\setminus S_0(w)$, and $S_6^\star(w)=S_6(w)\setminus S_3(w)$. Moreover, we have
 \begin{align*}
 S_4^\star(v)\cap S_6^\star(w) &=(S_4(v)\setminus S_1(v))\cap (S_6(w)\setminus S_3(w))\\
 &=(S_4(v)\cap S_6(w))\setminus (S_1(v)\cup S_3(w))=S_4(v)\setminus S_3(w)),
 \end{align*}
 and
 \begin{align*}
 S^\ast_4(v)\cap S^\ast_3(w) &=\left(S_4(v)\setminus S_1(v)\right)\cap \left(S_3(w)\setminus S_0(w)\right)\\
 &=(S_4(v)\cap S_3(w))\setminus (S_1(v)\cup S_0(w))=S_3(w)\setminus S_1(v).
 \end{align*}
 We conclude that $|S^\ast_4(v)|=d^4-d$, $|S^\ast_4(v)\cap S^\ast_6(w)|=d^4-d^3$, and $|S^\ast_4(v)\cap S^\ast_3(w)|=d^3-d$. Therefore we get from (\ref{dfl-1}) that the value of the transition probability $\P_{\mathsf{t}}(4,6\mid v\in\mathcal{V}_r)$ is
 \begin{align}
 \label{p_46}
 \nonumber
 \P_{\mathsf{t}}(4,6\mid v\in\mathcal{V}_r)&=\frac{1}{d-1}\cdot\frac{|S^\ast_4(v)\cap S^\ast_6(w)|}{|S^\ast_4(v)|}\left(1-\frac{|S_4^\star(v)\cap S_{3}^\star(w)|}{|S_4^\star(v)|}\right)\\[2mm]
 &=\frac{1}{d-1}\cdot\frac{d^4-d^3}{d^4-d}\left(1-\frac{d^3-d}{d^4-d}\right)=\frac{d^4}{d^5+d^4+d^3-d^2-d-1}.
 \end{align}
 Observe that in the formula (\ref{p_46}) the rational expressions of $|S^\ast_4(v)\cap S^\ast_6(w)|/|S^\ast_4(v)|$ and $|S_4^\star(v)\cap S_{3}^\star(w)|/|S^\ast_4(v)|$ correspond in Theorem~\ref{p_ij} to $p^{(r,4,6)}=(d^4-d^3)/(d^4-d)$ and $q^{(r,4)}=(d^3-d)/(d^4-d)$, respectively.

 Secondly, let us determine $\P_{\mathsf{t}}(1,6\mid v\in\mathcal{V}_r)$. Reasoning as before we deduce that if $\P_{\mathsf{t}}(1,6\mid v\in\mathcal{V}_r)\ne 0$, then we must have $S_1(v)\cap  S_6(w)\ne  \emptyset$, being $w$ the vertex adjacent from $v$ through which the deflection takes place. We can check that this necessary condition  holds if and only if $w_{12}=\alpha$; that is, $w$ must be again the vertex $w=\beta\gamma\alpha\beta\gamma\alpha\beta\gamma\alpha\beta\gamma\alpha$. We have $S^\ast_1(v)=S_1(v)$, because $S_0(v)\not\subseteq S_1(v)$; and,  as in the previous case, $S_6^\star(w)=S_6(w)\setminus S_3(w)$. But now, since $S_1(v)\subseteq S_3(w)$ (see (\ref{chain})), we deduce that
 $$
 S_1^\star(v)\cap S_6^\star(w)= (S_1(v)\cap S_6(w))\setminus S_3(w)=\emptyset.
 $$ 
 Therefore, although the necessary condition $S_1(v)\cap  S_6(w)\ne  \emptyset$ for having $\P_{\mathsf{t}}(1,6\mid v)\ne 0$ holds, we have in this case $\P_{\mathsf{t}}(1,6\mid v)=0$, because $S^\ast_1(v)\cap  S^\ast_6(w)=\emptyset$. This fact is captured in Theorem~\ref{p_ij} by setting $k^{(r,1,6)}=0$.
 \end{example}

 \begin{example}
 Let us consider the Kautz digraph $K(d,4)$, which vertex classes $\mathcal{V}_r$ are detailed  in Example~\ref{ex-K(d,4)}, and let us determine all the transition probabilities $\P_{\mathsf{t}}(i,j)$, $1\le i\le j\le 4$. 

 As in the previous example, let  $w$ be the vertex adjacent from $v$ through which the deflection takes place. If $\P_{\mathsf{t}}(i,j)\ne 0$, then $v$ and $w$ must clearly satisfy the necessary condition $S_i(v)\cap S_j(w)\ne\emptyset$. As we will prove in Lemma~\ref{tech-2}, for $i<D=4$ the intersection $S_i(v)\cap S_j(w)$ is nonempty if and only if $S_i(v)\subseteq S_j(w)$. Hence if $i<4$ and  $\P_{\mathsf{t}}(i,j)\ne 0$, then we must have 
 \begin{equation}
 \label{eq:nec}
 S_i(v)\subseteq S_j(w)\subseteq S_{j+1}(v),
 \end{equation}
 where the last inclusion holds because $w\in S_1(v)$. Besides, since in a Kautz digraph two successive symbols in the sequence representation of the vertices are different, we have $S_i(v)\not\subseteq S_i(w)$. Therefore, if $i<4$, then $S_i(v)\cap S_i(w)=\emptyset$,  and so $\P_{\mathsf{t}}(1,1)=\P_{\mathsf{t}}(2,2)=\P_{\mathsf{t}}(3,3)=0$. Furthermore, since the diameter of $K(d,4)$ is $4$, the transition probabilities $\P_{\mathsf{t}}(3,4)$ and $\P_{\mathsf{t}}(4,4)$ must be equal to $1$. So we only need to compute the probabilities $\P_{\mathsf{t}}(1,2)$, $\P_{\mathsf{t}}(1,3)$, $\P_{\mathsf{t}}(1,4)$, $\P_{\mathsf{t}}(2,3)$, and $\P_{\mathsf{t}}(2,4)$.

 First let us consider $\P_{\mathsf{t}}(1,2)$. According to (\ref{eq:nec}), the possible vertices $v$ at which deflection to distance 2 is possible must satisfy $S_1(v)\subseteq S_3(v)$. Thus either $v=\alpha\beta\alpha\beta\in\mathcal{V}_1$ or $v=\alpha\beta\gamma\beta\in \mathcal{V}_4$. Moreover, also by the necessary condition (\ref{eq:nec}), a vertex $w\in S_1(v)$ through which deflection from distance $1$ to distance $2$ is possible has to be such that $S_1(v)\subseteq S_2(w)$. It is easily checked that if $v\in \mathcal{V}_1$, then there is only one possible $w$, namely $w=\beta\alpha\beta\alpha$; whereas if $v\in \mathcal{V}_4$, then  $w=\beta\gamma\beta\gamma$. We can verify that, in both cases, $|S^\star_1(v)|=d$, $|S^\star_1(v)\cap S^\star_2(w)|=d-1$ and $|S^\star_1(v)\cap S^\star_0(w)|=1$. Therefore, by applying (\ref{dfl-1}), we obtain
 $$
 \P_{\mathsf{t}}(1,2\mid v\in \mathcal{V}_1)=\P_{\mathsf{t}}(1,2\mid v\in \mathcal{V}_4)=\frac{1}{d-1}\cdot\frac{d-1}{d}\left(1-\frac{1}{d}\right)=\frac{d-1}{d^2},
 $$ 
 and hence
 \begin{align*}
 \P_{\mathsf{t}}(1,2)&=\sum_{r=1,4} \P_{\mathsf{t}}(1,2 \mid v\in \mathcal{V}_r)\, \P(v\in \mathcal{V}_r)=\frac{d-1}{d^2}\left(\frac{|\mathcal{V}_1|}{|V|}+\frac{|\mathcal{V}_4|}{|V|}\right)\\[2mm]
 &=\frac{d-1}{d^2}\left(\frac{(d+1)d}{(d+1)d^3}+\frac{(d+1)d(d-1)}{(d+1)d^3}\right)=\frac{d-1}{d^3},
 \end{align*}
 where the probabilities $\P(v\in \mathcal{V}_r)$ have been calculated as $|\mathcal{V}_r|/|V|$. Observe that this expression of $\P_{\mathsf{t}}(1,2)$ corresponds to the one given in Theorem~\ref{p_ij} with nonnull rational expressions $p^{(1,1,2)}=p^{(4,1,2)}=(d-1)/d$ and $q^{(1,1)}=q^{(4,1)}=1/d$.

 Next let us calculate $\P_{\mathsf{t}}(1,3)$. By condition (\ref{eq:nec}) we have to consider those vertex classes $\mathcal{V}_r$ such that if $v\in\mathcal{V}_r$, then $S_1(v)\subseteq S_4(v)$. We can check that this inclusion holds if $v$ satisfies $v_2\ne v_4$; that is, if $v$ belongs to $\mathcal{V}_2$, $\mathcal{V}_3$, or $\mathcal{V}_5$. Moreover, we can verify  that, again, for each $v$ there is only one possible choice of $w$, and that, in any case, we have $|S^\star_1(v)|=d$, $|S^\star_1(v)\cap S^\star_3(w)|=d-1$ and $|S^\star_1(v)\cap S^\star_0(w)|=1$. Hence we conclude from (\ref{dfl-1}) that $\P_{\mathsf{t}}(1,3\mid v\in \mathcal{V}_2)=\P_{\mathsf{t}}(1,3\mid v\in \mathcal{V}_3)=\P_{\mathsf{t}}(1,3\mid v\in \mathcal{V}_5)={(d-1)}/{d^2}$. Therefore
 \begin{align*}
 \P_{\mathsf{t}}(1,3)&=\sum_{r=2,3,5} \P_{\mathsf{t}}(1,3 \mid v\in \mathcal{V}_r)\, \P(v\in \mathcal{V}_r)\\
 &=\frac{d-1}{d^2}\left(2 \frac{(d+1)d(d-1)}{(d+1)d^3}+
 \frac{(d+1)d(d-1)(d-2)}{(d+1)d^3}\right)=\frac{d^2-2d+1}{d^3}.
 \end{align*}
 This expression of $\P_{\mathsf{t}}(1,3)$ corresponds to the one given in Theorem~\ref{p_ij} by taking $p^{(2,1,3)}=p^{(3,1,3)}=p^{(5,1,3)}=(d-1)/d$ and $q^{(2,1)}=q^{(3,1)}=q^{(5,1)}=1/d$ as nonnull rational expressions.

 Once we know $\P_{\mathsf{t}}(1,2)$ and $\P_{\mathsf{t}}(1,3)$  we can obtain the remaining transition probability $\P_{\mathsf{t}}(1,4)$ as
 $$
 \P_{\mathsf{t}}(1,4)=1-\P_{\mathsf{t}}(1,2)-\P_{\mathsf{t}}(1,3)=\frac{d^2-d+1}{d^2}.
 $$

 Now let us calculate $\P_{\mathsf{t}}(2,3)$. So, by (\ref{eq:nec}), we have to consider those vertices $v$ verifying $S_2(v)\subseteq S_4(v)$; that is, those vertices $v$ satisfying $v_3\ne v_4$. But this necessary condition holds for all $v\in V$, because two consecutive symbols in the sequence representation of $v$ are different. Once more, for each $v$ there is only one possible choice of $w\in S_1(v)$ satisfying the necessary condition (\ref{eq:nec}). Moreover, we can check that if $v\in \mathcal{V}_1$, then $|S^\star_2(v)|=d^2-1$, $|S^\star_2(v)\cap S^\star_3(w)|=d^2-d$, and $|S^\star_2(v)\cap S^\star_1(w)|=d-1$; whereas if $v\in \mathcal{V}_s$, $2\le s \le 5$, then  $|S^\star_2(v)|=d^2$, $|S^\star_2(v)\cap S^\star_3(w)|=d^2-d$, and $|S^\star_2(v)\cap S^\star_1(w)|=d$. Therefore, $\P_{\mathsf{t}}(2,3\mid v\in \mathcal{V}_1)={d^2}/{((d^2-1)(d+1))}$ and $\P_{\mathsf{t}}(2,3\mid v\in \mathcal{V}_s)={(d-1)}/{d^2}$ for $2 \le s\le 5$. So we have
 $$
 \P_{\mathsf{t}}(2,3)=\sum_{1\le r\le 5} \P_{\mathsf{t}}(1,3 \mid v\in \mathcal{V}_r)\, \P(v\in \mathcal{V}_r)=\frac{d^6-2d^4+3d^2-1}{d^7+d^6-d^5-d^4}.
 $$
 In this case, the nonnull rational expressions in Theorem~\ref{p_ij} to formulate $\P_{\mathsf{t}}(2,3)$ are $p^{(1,2,3)}=(d^2-d)/(d^2-1)$, $p^{(2,2,3)}=p^{(3,2,3)}=p^{(4,2,3)}=p^{(5,2,3)}=(d^2-d)/d^2$, $q^{(1,2)}=(d-1)=(d^2-1)$, and $q^{(2,2)}=q^{(3,2)}=q^{(4,2)}=p^{(5,2)}=1/d$.

 Finally, 
 $$
 \P_{\mathsf{t}}(2,4)=1-\P_{\mathsf{t}}(2,3)=\frac{d^7-d^5+d^4-3d^2+1}{d^7+d^6-d^5-d^4}.
 $$
 \end{example}

Using the Markov model  \cite{fm} mentioned in Section~1, we can apply the probabilities given in Theorems~\ref{p_in}, \ref{p_ij}, and  \ref{p_iD} to measure the efficiency of deflection routing in De Bruijn and Kautz networks.

We conclude this subsection with two corollaries that are straightforward consequences of our previous results. The first one 
deals with the asymptotic behaviour of the input and transition probabilities. The second one is about the computation of the mean distance in the De Bruijn and Kautz digraphs (some related results can be found in \cite{bls,PaSe,SaTv}).

\begin{corollary}  
	\mbox{ }
	\begin{enumerate}
	  \item $\P_{\mathsf{in}}(i)\sim 1/d^{D-i}$ as $d\to\infty$.
		\item If  $j<D$ and $d$ is large enough, then $\P_{\mathsf{t}}(i,j)\le 1/d$.
		\item If $j=D$, then $\P_{\mathsf{t}}(i,j)\sim 1$ as $d\to\infty$.
	\end{enumerate}
\end{corollary}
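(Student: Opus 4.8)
All three items follow by reading off the leading behaviour in $d$ from the rational expressions of Theorems~\ref{p_in}, \ref{p_ij} and \ref{p_iD}, and item~(3) will fall out of item~(2); I would treat them in the order (1), (2), (3). For (1) I use the formula $\P_{\mathsf{in}}(i)=\sum_{r=1}^{l}\frac{|S_i^\star(v^{(r)})|}{|V|-1}\cdot\frac{|\mathcal{V}_r|}{|V|}$ of Theorem~\ref{p_in}. By Proposition~\ref{propo 2.3} each $|S_i^\star(v^{(r)})|$ equals $d^i$ minus a polynomial in $d$ of degree at most $i-1$ with coefficients in $\{0,1\}$, so $|S_i^\star(v^{(r)})|=d^i(1+O(1/d))$ uniformly over the $l$ classes (and $l$ depends only on $D$). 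Since $|V|$ is $d^D$ or $d^D+d^{D-1}$, also $|V|-1=d^D(1+O(1/d))$, hence $|S_i^\star(v^{(r)})|/(|V|-1)=d^{-(D-i)}(1+O(1/d))$ uniformly in $r$; as $\sum_r|\mathcal{V}_r|/|V|=1$, summing gives $\P_{\mathsf{in}}(i)=d^{-(D-i)}(1+O(1/d))\sim 1/d^{D-i}$.

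For (2) I start from the per-class identity~(\ref{dfl-1}), rewritten as $\P_{\mathsf{t}}(i,j\mid v\in\mathcal{V}_r)=\frac{1}{d-1}\,\rho\,(1-\sigma)$ with $\rho=|S_i^\star(v)\cap S_j^\star(w_v)|/|S_i^\star(v)|\in[0,1]$ and $\sigma=|S_i^\star(v)\cap S_{i-1}^\star(w_v)|/|S_i^\star(v)|\in[0,1]$, where $w_v$ is the vertex adjacent from $v$ determined by $v$, $i$, $j$. The relevant dichotomy is whether $\sigma=0$. If $\sigma\neq 0$, then by Proposition~\ref{v2-first} and Theorem~\ref{propo 2.9-v2-cas 1} the sets $S_i^\star(v)\cap S_{i-1}^\star(w_v)$ and $S_i^\star(v)\cap S_j^\star(w_v)$ are disjoint and, whenever the latter is nonempty, partition $S_i^\star(v)$; thus $\rho\le 1-\sigma$ and $\P_{\mathsf{t}}(i,j\mid v)\le(1-\sigma)^2/(d-1)$. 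Moreover Theorem~\ref{propo 2.9-v2-cas 1} writes $|S_i^\star(v)\cap S_{i-1}^\star(w_v)|$ as $d^{i-1}$ minus a polynomial of degree at most $i-2$ with $\{0,1\}$ coefficients, so $|S_i^\star(v)\cap S_{i-1}^\star(w_v)|\ge d^{i-1}-(d^{i-2}+\cdots+1)$, and since $|S_i^\star(v)|\le d^i$ we get $\sigma\ge\frac{d-2}{d(d-1)}$, whence $\P_{\mathsf{t}}(i,j\mid v)\le\frac{1}{d-1}\left(1-\frac{d-2}{d(d-1)}\right)^{2}=\frac1d-\frac1{d^2}+O(1/d^3)$. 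If $\sigma=0$, then (for $d\ge 3$) statement~(3) of Proposition~\ref{v2-second} forces $G=B(d,D)$ and $v_i=\cdots=v_D=(w_v)_D$, and Theorem~\ref{propo 2.9-v2-cas 2} gives $S_i^\star(v)=S_i^\star(v)\cap S_i^\star(w_v)$, so $\P_{\mathsf{t}}(i,j\mid v)$ equals $1/(d-1)$ if $j=i$ and $0$ if $j>i$.

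Now I split on $j$. If $i<j<D$ the classes in the second alternative contribute $0$, so $\P_{\mathsf{t}}(i,j)\le\max_r\P_{\mathsf{t}}(i,j\mid v^{(r)})\le\frac1d-\frac1{d^2}+O(1/d^3)<\frac1d$ for $d$ large. If $j=i<D$, I write $\P_{\mathsf{t}}(i,i)=\frac{\mu}{d-1}+\Sigma$, where $\mu$ is the aggregate probability of the \emph{degenerate} classes of $B(d,D)$ (those whose vertices satisfy $v_i=\cdots=v_D$); counting these vertices gives $\mu=d^{-(D-i)}\le d^{-1}$ because $i\le D-1$, while the first alternative yields $\Sigma\le\left(\frac1d-\frac1{d^2}+O(1/d^3)\right)(1-\mu)$. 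Using $\frac1{d-1}=\frac1d+\frac1{d^2}+O(1/d^3)$ and combining gives $\P_{\mathsf{t}}(i,i)\le\frac1d-\frac{1-2\mu}{d^2}+O(1/d^3)<\frac1d$ once $d$ is large, since $\mu\to 0$. Finally (3) is immediate: $\P_{\mathsf{t}}(D,D)=1$, and for $i<D$ we have $\P_{\mathsf{t}}(i,D)=1-\sum_{j=i}^{D-1}\P_{\mathsf{t}}(i,j)\ge 1-(D-1)/d\to 1$ by (2).

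The step I expect to be the main obstacle is the case $j=i$ of (2). The bound $\P_{\mathsf{t}}(i,i\mid v)\le 1/(d-1)$ available for every class is too weak, since $1/(d-1)>1/d$; one must genuinely exploit the quadratic factor $(1-\sigma)^2$ (rather than $1-\sigma$) coming from the disjoint decomposition of $S_i^\star(v)$, together with the fact that the degenerate classes --- the only ones lacking this gain --- carry vanishing mass $d^{-(D-i)}$. Since every quantity involved is a ratio of polynomials in $d$ of bounded degree with coefficients in $\{0,1,2\}$, the error terms are legitimately of order $O(1/d^2)$ uniformly in the (finitely many) classes, and this uniformity is exactly what makes the $\Theta(1/d^2)$ saving of the non-degenerate classes outweigh the $O(1/d^3)$ surplus of the degenerate ones.
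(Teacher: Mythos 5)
Your proposal is correct. The paper itself gives no proof of this corollary --- it is introduced as a ``straightforward consequence'' of the preceding results, and the only written justification is the one-line remark at the end of the proof of Theorem~\ref{p_in} that $\P_{\mathsf{in}}(i)=\Theta(1/d^{D-i})$ --- so there is no argument of the authors to compare yours against in detail; what you have written is the natural quantitative working-out of what the paper leaves implicit. Items (1) and (3) are routine and your treatment is fine. You are also right that item (2) is the only place where something must actually be proved: the trivial per-class bound $\P_{\mathsf{t}}(i,j\mid v)\le 1/(d-1)$ exceeds $1/d$, so one must use that $p^{(r,i,j)}\le 1-q^{(r,i)}$ (the disjoint decomposition of $S_i^\star(v)$ from Proposition~\ref{v2-first} and Theorem~\ref{propo 2.9-v2-cas 1}) together with the lower bound $q^{(r,i)}\ge (d-2)/(d(d-1))$ coming from the polynomial form of $|S_i^\star(v)\cap S_{i-1}^\star(w_v)|$, and then absorb the degenerate classes of $B(d,D)$ (those with $v_i=\cdots=v_D$), whose contribution is only controlled by $1/(d-1)$ but whose total mass is $d^{-(D-i)}\le 1/d$. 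Your bookkeeping $\P_{\mathsf{t}}(i,i)\le \frac1d-\frac{1-2\mu}{d^2}+O(1/d^3)$ checks out, as does the cancellation of the $\mu/d$ terms that makes it work. The only caveat worth recording is that the resulting inequality is genuinely asymptotic (e.g.\ the crude estimate $(1-\sigma_0)^2/(d-1)\le 1/d$ already fails at $d=3$), which is consistent with the corollary's ``$d$ large enough'' but means the argument does not yield an explicit threshold without sharpening the bounds.
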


\begin{corollary}  If $G$ is the De Bruijn digraph $B(d,D)$ or the Kautz digraph $K(d,D)$, then the mean distance of $G$ is given by $\sum_{i=1}^D i\cdot \P_{\mathsf{in}}(i)$, and hence it can be expressed as a rational fraction in the degree $d$.

\end{corollary}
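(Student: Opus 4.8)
The plan is to recognise $\sum_{i=1}^{D} i\cdot\P_{\mathsf{in}}(i)$ as nothing but the average of $d(v,v')$ over all ordered pairs of distinct vertices, and then to read off rationality directly from Theorem~\ref{p_in}. First I would recall that the mean distance of $G$ is
$$
\bar d(G)=\frac{1}{|V|(|V|-1)}\sum_{\substack{v,v'\in V\\ v\ne v'}}d(v,v'),
$$
which is well defined because $B(d,D)$ and $K(d,D)$ are strongly connected of diameter $D$, so $d(v,v')\in\{1,\dots,D\}$ for every ordered pair of distinct vertices. Conditioning on the first vertex, the definition of $\P_{\mathsf{in}}(i)$ gives
$$
\P_{\mathsf{in}}(i)=\frac{1}{|V|}\sum_{v\in V}\frac{|S_i^\star(v)|}{|V|-1}=\frac{N_i}{|V|(|V|-1)},\qquad N_i:=\bigl|\{(v,v')\in V\times V: d(v,v')=i\}\bigr|,
$$
since $\sum_{v}|S_i^\star(v)|$ counts exactly the ordered pairs whose distance equals $i$.

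Next I would split the sum defining $\bar d(G)$ according to the distance between the two vertices. The sets $\{(v,v'):d(v,v')=i\}$, $1\le i\le D$, partition the ordered pairs of distinct vertices, so $\sum_{v\ne v'}d(v,v')=\sum_{i=1}^{D} i\,N_i$. Dividing by $|V|(|V|-1)$ and substituting $N_i=|V|(|V|-1)\,\P_{\mathsf{in}}(i)$ yields $\bar d(G)=\sum_{i=1}^{D} i\,\P_{\mathsf{in}}(i)$, which is the first assertion.

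For the second assertion I would invoke Theorem~\ref{p_in}: for each fixed $i$,
$$
\P_{\mathsf{in}}(i)=\sum_{r=1}^{l}\frac{|\mathcal{V}_r|}{|V|(|V|-1)}\bigl(d^i-a^{(r,i)}_{i-1}d^{i-1}-\cdots-a^{(r,i)}_{0}\bigr).
$$
Here $|V|=d^D$ (resp.\ $d^D+d^{D-1}$) and each $|\mathcal{V}_r|=d(d-1)\cdots(d-s+1)$ (resp.\ $(d+1)d\cdots(d-s+2)$) is a polynomial in $d$; the number of classes $l$ and the coefficients $a^{(r,i)}_{k}\in\{0,1\}$ are determined purely by the sequence representations of the class representatives and hence do not depend on $d$ (and $|\mathcal{V}_r|$ vanishes automatically when $d$ is too small for that class to be realised, so no case distinction on $d$ is needed). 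Thus each $\P_{\mathsf{in}}(i)$ is a rational fraction in $d$, and since $D$ is a fixed integer, $\bar d(G)=\sum_{i=1}^{D} i\,\P_{\mathsf{in}}(i)$ is a finite $\mathbb{Q}$-linear combination of rational fractions in $d$, hence itself a rational fraction in $d$ (one may take $|V|(|V|-1)$ as a common denominator).

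The argument is essentially bookkeeping, so there is no real obstacle; the only points that require a little care are that the mean distance of a digraph must be taken over \emph{ordered} pairs, and that the combinatorial data entering Theorem~\ref{p_in} (the partition into classes $\mathcal{V}_r$ and the coefficients $a^{(r,i)}_k$) is independent of the degree $d$ — but the latter has already been established when the partition~(\ref{part}) was introduced.
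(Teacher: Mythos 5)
Your proof is correct and follows exactly the route the paper intends: the corollary is stated there as an immediate consequence of Theorem~\ref{p_in}, obtained by interpreting the mean distance as the expected distance between two uniformly chosen distinct vertices (so that it equals $\sum_{i=1}^{D} i\cdot\P_{\mathsf{in}}(i)$ by definition of $\P_{\mathsf{in}}$) and then reading off rationality from the explicit rational expression of each $\P_{\mathsf{in}}(i)$. Your additional remarks on ordered pairs and on the $d$-independence of $l$ and of the coefficients $a^{(r,i)}_k$ are accurate and consistent with the discussion surrounding the partition~(\ref{part}).
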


\setcounter{subsection}{0}
\section{Proofs of our results on the polynomial description of the distance-layer structure}
\label{proofs}
In this section we prove the results presented in Subsection~\ref{sub: layer}. To do this we will use several technical lemmas gathered in Subsection~\ref{technical-lemmas}. We remind that $V$ denotes the vertex set of the digraph, either $G=B(d,D)$ or $G=K(d,D)$, and that, for $v=v_1v_2\cdots v_D\in V$, we denote the subsequence $v_iv_{i+1}\cdots v_j$ by $v_{[i,j]}$.

\subsection{Technical lemmas}
\label{technical-lemmas}

\begin{lemma} 
\label{tech-1}
Let $v\in V$. Then $|S_i(v)|=d^i$ for  $0\le i\le D$. Moreover, if $G=B(d,D)$ and  $i\ge D$, then   $S_i(v)=V$; while  if $G=K(d,D)$ and  $i\ge D+1$, then   $S_i(v)=V$.
\end{lemma}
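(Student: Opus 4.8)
The plan is to argue directly from the sequence representation of walks in $G$. Recall that the out-neighbours of a vertex $x_1x_2\cdots x_D$ are precisely the vertices $x_2\cdots x_D\,y$ (with $y\in A$, and $y\ne x_D$ in the Kautz case), so a walk of length $i$ starting at $v=v_1v_2\cdots v_D$ is completely determined by the sequence of appended symbols $y_1,y_2,\ldots,y_i$, and its endpoint is $v_{i+1}\cdots v_D\,y_1\cdots y_i$ when $0\le i\le D$. First I would count the admissible appended sequences of length $i$: in $B(d,D)$ there are $d$ choices for each $y_k$, giving $d^i$; in $K(d,D)$ the alphabet has $d+1$ symbols, but the constraints $y_1\ne v_D$ and $y_{k+1}\ne y_k$ leave exactly $d$ admissible choices at each step, so again there are $d^i$ of them.

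Next, for $0\le i\le D$ I would note that the map sending an admissible appended sequence to the endpoint of the corresponding walk is injective: two distinct sequences $y_1\cdots y_i$ yield endpoints that already differ in their last $i$ coordinates (for $i=0$ this is just $S_0(v)=\{v\}$, and for $i=D$ the endpoint \emph{is} the tuple $y_1\cdots y_D$). Consequently $|S_i(v)|$ equals the number of admissible appended sequences, namely $d^i$, for every $0\le i\le D$.

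For the second assertion I would invoke the facts recalled in Subsection~\ref{distance-layer}: in $B(d,D)$ there is a walk of length $D$ between any ordered pair of vertices, and in $K(d,D)$ there is one of length $D+1$. Hence $S_D(v)=V$ if $G=B(d,D)$, and $S_{D+1}(v)=V$ if $G=K(d,D)$. To reach all larger lengths, given $i>D$ (resp.\ $i>D+1$) and an arbitrary target $u\in V$, I would pick any $x\in S_{i-D}(v)$ (resp.\ $x\in S_{i-D-1}(v)$), which is nonempty since one may follow any walk of that length out of $v$, then take a walk of length $D$ (resp.\ $D+1$) from $x$ to $u$, and concatenate the two walks at $x$ to obtain a walk of length $i$ from $v$ to $u$. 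Thus $S_i(v)=V$ for all $i\ge D$ in the De Bruijn case and for all $i\ge D+1$ in the Kautz case.

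This lemma is essentially a bookkeeping statement, so I do not expect a real obstacle; the two points that need care are the verification that the Kautz adjacency constraint still leaves exactly $d$ choices for each appended symbol despite the alphabet having $d+1$ letters, and the observation that the second assertion deliberately does not claim $S_D(v)=V$ for $G=K(d,D)$, where in fact $|S_D(v)|=d^D<d^D+d^{D-1}=|V|$.
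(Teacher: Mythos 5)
Your proof is correct and follows the same route the paper intends: the paper's own proof is a one-line appeal to "the sequence representation of the vertices and the adjacency rules" together with the existence of walks of length $D$ (resp.\ $D+1$) between any two vertices, and your argument simply fills in those details (the count of admissible appended sequences, the injectivity of the endpoint map, and the concatenation step for $i$ beyond $D$ or $D+1$). No gaps.
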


\begin{proof}
The result follows directly from the sequence representation of the vertices and the adjacency rules. Moreover, we must consider that in $K(d,D)$ there exists a walk of length $D+1$ from a given vertex $v$ to any other one.
\end{proof}

The main part of the next lemma essentially states that, given any two (no necessarily different) vertices $v,v'$, if $k\le i<D$ or $k<i=D$, then either $S_k(v)\subseteq S_i(v')$ or $S_k(v)\cap S_i(v')=\emptyset$. The precise formulation in terms of the sequence representation of $v$ and $v'$ is as follows.

\begin{lemma} 
\label{tech-2}
Let $v,v'\in V$ be two vertices  with sequence representation $v=v_1v_2\cdots v_D$ and $v'=v'_1v'_2\cdots v'_D$. Let $0\le k\leq i$ and assume that $S_i(v')\neq V$. Then $i\le D$, $S_k(v)\neq V$, and  the following statements hold:
\begin{enumerate}
\item If $ k\le i<D$, then either $S_k(v)\subseteq S_i(v')$ or $S_k(v)\cap S_i(v')=\emptyset$. Moreover, $S_k(v)\subseteq S_i(v')$ if and only if $v_{[k+1,D-(i-k)]}=v'_{[i+1,D]}$.
\item If $k<i= D$, then $G=K(d,D)$ and either $S_k(v)\subseteq S_D(v')$ or $S_k(v)\cap S_D(v')=\emptyset$. Moreover, $S_k(v)\subseteq S_D(v')$ if and only if $v_{k+1}\neq v'_{D}$.
\item 
If $k=i= D$, then $G=K(d,D)$ and $S_D(v)\cap S_D(v')\ne\emptyset$. Moreover, if $v_D=v'_D$ then $S_D(v)=S_D(v')$, whereas if $v_D\ne v'_D$, then $S_D(v)\ne S_D(v')$ and $|S_D(v)\cap S_D(v')|=d^D-d^{D-1}$.
\end{enumerate}
\end{lemma}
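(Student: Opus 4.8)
The plan is to work entirely with the sequence representation of the vertices, using the observation recalled in the introduction: if $v=v_1v_2\cdots v_D$, then a vertex $u$ lies in $S_k(v)$ if and only if $u$ has sequence representation $u=v_{k+1}\cdots v_D *\cdots *$, i.e. the first $D-k$ symbols of $u$ are forced to be $v_{k+1},\ldots,v_D$ and the remaining $k$ symbols are free (subject to the adjacency constraint that consecutive symbols differ, in the Kautz case). So $S_k(v)$ is precisely the set of vertices whose length-$(D-k)$ prefix equals the length-$(D-k)$ suffix of $v$. The condition $S_i(v')\neq V$ forces, by Lemma~\ref{tech-1}, that $i\le D$ (and if $i=D$ then $G=K(d,D)$, since in $B(d,D)$ we have $S_D(v')=V$); and then from $k\le i\le D$ we also get $S_k(v)\neq V$ by the same lemma. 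These are the easy preliminary reductions.

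For statement (1), assume $k\le i<D$. Then $S_i(v')$ is the set of vertices with prefix $v'_{i+1}\cdots v'_D$ (a prefix of length $D-i\ge 1$), and $S_k(v)$ is the set of vertices with prefix $v_{k+1}\cdots v_D$ (of length $D-k\ge D-i$). Since $D-k\ge D-i$, the prefix constraint defining $S_k(v)$ is \emph{at least as long} as the one defining $S_i(v')$. Hence either the first $D-i$ symbols of the string $v_{k+1}\cdots v_D$ agree with $v'_{i+1}\cdots v'_D$ — in which case every element of $S_k(v)$ satisfies the $S_i(v')$-constraint, so $S_k(v)\subseteq S_i(v')$ — or they disagree, in which case no vertex can simultaneously satisfy both prefix constraints and $S_k(v)\cap S_i(v')=\emptyset$. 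The first $D-i$ symbols of $v_{k+1}\cdots v_D$ are $v_{k+1},\ldots,v_{k+(D-i)}=v_{D-(i-k)}$, so the ``agreement'' condition is exactly $v_{[k+1,\,D-(i-k)]}=v'_{[i+1,D]}$, which is the claimed characterization. (One should note this subsequence is nonempty since $k+1\le D-(i-k)$ is equivalent to $i<D$, which holds; and in the Kautz case there is nothing extra to check because both sets are already nonempty, so the prefixes are realizable.)

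For statement (2), $k<i=D$ forces $G=K(d,D)$ as noted. Now $S_D(v')$ is the set of vertices whose length-$(D-D)=0$ prefix matches — i.e. no prefix constraint at all — \emph{except} that in the Kautz case the realizable set of vertices reachable by a walk of length $D$ is not all of $V$: a vertex $u=u_1u_2\cdots u_D$ lies in $S_D(v')$ iff there is a walk $v'\to u$ of length $D$, and the unique such walk shifts all of $v'$ out, so the only constraint left is that the first free symbol $u_1$ must differ from the last symbol $v'_D$ of $v'$ that it is appended after; more carefully, $S_D(v')=\{u: u_1\neq v'_D\}$. Meanwhile $S_k(v)=\{u: u \text{ has prefix } v_{k+1}\cdots v_D\}$, whose first symbol is $v_{k+1}$. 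Thus $S_k(v)\subseteq S_D(v')$ iff $v_{k+1}\neq v'_D$ (all elements of $S_k(v)$ start with $v_{k+1}$, so they all avoid $v'_D$ iff $v_{k+1}$ itself does), and otherwise, if $v_{k+1}=v'_D$, then \emph{every} element of $S_k(v)$ starts with $v'_D$ and hence lies outside $S_D(v')$, giving $S_k(v)\cap S_D(v')=\emptyset$. For statement (3), $k=i=D$ again forces $G=K(d,D)$, and then $S_D(v)=\{u:u_1\neq v_D\}$ and $S_D(v')=\{u:u_1\neq v'_D\}$; these always intersect because $d+1\ge 3$ leaves at least one admissible choice of $u_1\notin\{v_D,v'_D\}$; if $v_D=v'_D$ the two sets coincide, and if $v_D\neq v'_D$ they are distinct, their intersection being $\{u:u_1\notin\{v_D,v'_D\}\}$, which has $(d+1-2)\cdot d^{D-1}=(d-1)d^{D-1}=d^D-d^{D-1}$ elements (the first symbol has $d-1$ choices, each subsequent of the $D-1$ remaining symbols has $d$ choices since it only must differ from its predecessor).

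The only genuinely delicate point — the ``main obstacle'' — is getting the Kautz adjacency constraint bookkeeping exactly right in cases (2) and (3): one must be careful that the set $S_D(v')$ in $K(d,D)$ is $\{u:u_1\ne v'_D\}$ and not all of $V$, and that when intersecting two such sets (or a prefix-defined $S_k(v)$ with such a set) every realizability subtlety is accounted for. A clean way to handle this uniformly is to describe, for each $S_k(v)$ with $k\le D$, the vertex set as $\{u: u_{[1,D-k]}=v_{[k+1,D]}\}$ and simply remember that in the Kautz case these constraints are automatically consistent with the ``consecutive symbols differ'' rule precisely because $S_k(v)\ne\emptyset$ (by Lemma~\ref{tech-1}, $|S_k(v)|=d^k>0$), so no extra emptiness check is ever needed; all the counting then reduces to comparing prefix strings and counting free symbols, which is routine.
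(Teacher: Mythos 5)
Your proof is correct and follows essentially the same route as the paper's: both arguments reduce everything to comparing the forced prefix $v_{[k+1,D]}$ of elements of $S_k(v)$ with the (shorter) forced prefix $v'_{[i+1,D]}$ of elements of $S_i(v')$ when $i<D$, and to the description $S_D(v')=\{u : u_1\ne v'_D\}$ in the Kautz case when $i=D$, with the same counting $(d-1)d^{D-1}$ for statement (3). The extra remarks you make about realizability of the prefix constraints in the Kautz case are correct but not a departure from the paper's argument.
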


\begin{proof}
It follows from Lemma~\ref{tech-1} that if $S_i(v')\neq V$, then $i\le D-1$ if $G=B(d,D)$,  $i\le D$ if $G=K(d,D)$, and $S_k(v)\ne V$ for $k\le i$.

Suppose $i<D$. The sequences corresponding to vertices $w\in S_k(v)$ and $w'\in S_i(v')$ are of the form $w=v_{k+1}\cdots v_D\ast\cdots\ast$ and  $w'=v'_{i+1}\cdots v'_D\ast\cdots\ast$, respectively. Since $k\le i$, we get  from these sequence representations that $ S_k(v)\cap S_i(v')\ne\emptyset$ if and only if 
\begin{equation}
\label{contain}
v_{k+1}=v'_{i+1},\ldots, v_{k+(D-i)}=v'_{D},
\end{equation}
that is, the subsequences $v_{[k+1,D-(i-k)]}$ and $v'_{[i+1,D]}$ coincide. Furthermore, $S_k(v)\subseteq S_i(v')$ if and only if condition (\ref{contain}) holds. (Notice that for $k=i$ we have $S_k(v)= S_i(v')$.) Hence if $k\le i< D$, then either $S_k(v)\subseteq S_i(v')$ or $S_k(v)\cap S_i(v')=\emptyset$ and statement (1) is proved.

Now assume that $k<i=D$ and $G=K(d,D)$. Then $S_k(v)=\{w\in V: w=v_{k+1}\cdots v_D \ast\cdots\ast\}$ and $S_D(v')=\{w\in V: w=w_1w_2\cdots w_D,\ w_1\ne v'_D\}$. Therefore, if  $v_{k+1}\ne v'_D$ then $S_k(v)\subseteq S_D(v')$, whereas if $v_{k+1}=v'_D$ then $S_k(v)\cap S_D(v')=\emptyset$. 

Finally assume that $k=i=D$ and $G=K(d,D)$. Then $S_D(v)=\{w\in V: w=w_1w_2\cdots w_D,\ w_1\ne v_D\}$ and $S_D(v')=\{w\in V: w=w_1w_2\cdots w_D,\ w_1\ne v'_D\}$. Hence $S_D(v)\cap S_D(v')\ne\emptyset$ because the alphabet $A$ has $d+1$ symbols and $d\ge 2$. Moreover, $S_D(v)\subseteq S_{D}(v')$ if and only if $S_D(v)=S_{D}(v')$, if and only if $v_D=v'_D$. If $v_D\ne v'_D$, then $|S_D(v)\cap S_D(v')|=(d-1)d^{D-1}=d^D-d^{D-1}$, because $w_1\in A\setminus \{v_D,v'_D\}$.
\end{proof}

\begin{remark}
\label{rem_S_kj-bis}
Let $0\le k<j<i\le D$. By statement (1) of Lemma~\ref{tech-2},  $S_j(v)\not\subseteq S_i(v)$ if and only if $S_j(v)\cap S_i(v)=\emptyset$. Therefore, if $S_{k}(v)\subseteq S_{i}(v)$ and $S_j(v)\not\subseteq S_i(v)$, then $S_k(v)\not\subseteq S_j(v)$. This observation allows us to reformulate Definition~\ref{def_S_kj} in the following way:
 Let $v\in V$ and let $k,i$ be two integers such that $0\le i\le D$ and $0\leq k\leq i$. Then
$$
S_{k,i}(v)=\left\{\begin{array}{ll}
S_{k}(v), & \textrm{if } S_{k}(v)\subseteq S_{i}(v) \textrm{ and } S_{k}(v)\cap S_{j}(v)=\emptyset \textrm{ for all }j,\ k<j<i;\\[2mm]
\emptyset,&   \textrm{otherwise. }
\end{array}\right.
$$
\end{remark}

\begin{remark}
\label{rem_S_kj-seq}
By the previous remark and Lemma~\ref{tech-2} we have $S_{k,i}(v)=S_k(v)$ if and only if $v_{[k+1,D-(i-k)]}=v_{[i+1,D]}$ and $v_{[k+1,D-(j-k)]}\ne v_{[j+1,D]}$ for all $j$, $k<j< i$. In particular, if $k=i-1$, then $S_{i-1,i}(v)=S_{i-1}(v)$ if and only if $S_{i-1}(v)\subseteq S_i(v)$; if and only if $v_{[i,D-1]}=v_{[i+1,D]}$; if and only if $v_i=v_{i+1}=\cdots=v_D$.
\end{remark}

\begin{remark}
\label{void}
Let $0\le k_1,k_2<i\le D$. We claim that if $k_1\ne k_2$, then $S_{k_1,i}(v) \cap S_{k_2,i}(v)=\emptyset$. Indeed,  if $S_{k_1,i}(v) \cap S_{k_2,i}(v)\ne\emptyset$, then $S_{k_1,i}(v)\ne\emptyset$ and $S_{k_2,i}(v)\ne\emptyset$. So, $S_{k_1,i}(v)=S_{k_1}(v)\subseteq S_i(v)$ and $S_{k_2,i}(v)=S_{k_2}(v)\subseteq S_i(v)$. Hence $S_{k_1}(v) \cap S_{k_2}(v)=S_{k_1,i}(v) \cap S_{k_2,i}(v)\ne\emptyset$. Now, by applying Lemma~\ref{tech-2}, either $S_{k_1}(v) \subseteq S_{k_2}(v)$ or $S_{k_2}(v) \subseteq S_{k_1}(v)$. Hence  either $S_{k_1}(v) \subseteq S_{k_2}(v)\subseteq S_i(v)$ or $S_{k_2}(v) \subseteq S_{k_1}(v)\subseteq S_i(v)$. In any case, this leads us to a contradiction with the definition of $S_{k,i}(v)$. This completes the proof of our claim.
\end{remark}


The following result describes the structure of the distance-layer set $S_{i}^\star(v)$.

\begin{lemma}
\label{tech-4}
Let $v\in V$ and let $0\leq i\le D$. Then $\displaystyle S_{i}^\star(v)=S_i(v)\setminus \Big (\bigcup_{k=0}^{i-1} S_{k,i}(v)  \Big ).$
\end{lemma}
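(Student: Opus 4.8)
The plan is to start from the known decomposition \eqref{layer-previ}, namely $S_i^\star(v)=S_i(v)\setminus\bigcup_{k=0}^{i-1}S_k(v)$, and replace each $S_k(v)$ on the right-hand side by $S_{k,i}(v)$ without changing the overall union. The key observation is that, by Lemma~\ref{tech-2}(1) (together with the trivial fact $S_0(v)\subseteq S_i(v)$ whenever $i\le D$, and noting $S_i(v)\ne V$ here since $i\le D$ and the statement is vacuous otherwise), for each $k<i$ we have the dichotomy: either $S_k(v)\subseteq S_i(v)$ or $S_k(v)\cap S_i(v)=\emptyset$. In the second case $S_k(v)$ contributes nothing to $S_i(v)\setminus\bigcup_k S_k(v)$, so such indices can be discarded outright. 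Hence $S_i(v)\setminus\bigcup_{k=0}^{i-1}S_k(v)=S_i(v)\setminus\bigcup\{S_k(v):k<i,\ S_k(v)\subseteq S_i(v)\}$.

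The main step is then to show that, among the indices $k<i$ with $S_k(v)\subseteq S_i(v)$, the ones with $S_{k,i}(v)=\emptyset$ are redundant in this union. Concretely, I would argue: if $S_k(v)\subseteq S_i(v)$ but $S_{k,i}(v)=\emptyset$, then by Definition~\ref{def_S_kj} (or its reformulation in Remark~\ref{rem_S_kj-bis}) there is some $j$ with $k<j<i$, $S_j(v)\subseteq S_i(v)$, and $S_k(v)\subseteq S_j(v)$; so $S_k(v)$ is swallowed by $S_j(v)$, which is itself one of the sets in the union. Iterating this (the index $j$ strictly increases and stays below $i$, so the process terminates) shows that every $S_k(v)$ with $S_k(v)\subseteq S_i(v)$ is contained in $\bigcup_{k'<i}S_{k',i}(v)$; more carefully, one shows by downward induction on $k$ that $S_k(v)\subseteq\bigcup_{k\le k'<i}S_{k',i}(v)$ whenever $S_k(v)\subseteq S_i(v)$. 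Combined with the obvious inclusion $S_{k',i}(v)\subseteq S_{k'}(v)\subseteq S_i(v)$ (valid whenever $S_{k',i}(v)\ne\emptyset$), this gives the set equality $\bigcup\{S_k(v):k<i,\ S_k(v)\subseteq S_i(v)\}=\bigcup_{k=0}^{i-1}S_{k,i}(v)$, and substituting back yields the claim.

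The step I expect to be the (mild) obstacle is the bookkeeping in the downward induction: one must be sure that when $S_k(v)$ is replaced by some $S_j(v)$ with $j>k$, that $S_j(v)$ is again covered — either because $S_{j,i}(v)=S_j(v)$ directly, or, if $S_{j,i}(v)=\emptyset$, because the inductive hypothesis already applies to $j$ (since $j>k$ and $S_j(v)\subseteq S_i(v)$). Setting up the induction on the quantity $i-k$ makes this clean. Everything else is immediate from \eqref{layer-previ}, Lemma~\ref{tech-2}, and the definition of $S_{k,i}(v)$; no computation is involved, only the containment dichotomy and a finite descent argument.
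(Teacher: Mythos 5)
Your proposal is correct and takes essentially the same route as the paper: start from (\ref{layer-previ}), intersect each $S_k(v)$ with $S_i(v)$, invoke the dichotomy of Lemma~\ref{tech-2} to keep only the indices with $S_k(v)\subseteq S_i(v)$, and then discard the non-maximal ones; your downward induction merely makes explicit the absorption step that the paper's one-line ``Therefore'' leaves implicit. One inessential slip: the parenthetical claim that $S_0(v)\subseteq S_i(v)$ whenever $i\le D$ is false in general (Example~\ref{example B(d,7)} has $S_0(v)\cap S_6(v)=\emptyset$), but nothing in your argument depends on it, since the $k=0$ case of the dichotomy already follows from Lemma~\ref{tech-2} (or trivially when $S_i(v)=V$).
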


\begin{proof}
From the definitions it is clear that 
$$
S_{i}^\star(v)=S_i(v)\setminus \Big (\bigcup_{k=0}^{i-1}S_{k}(v) \Big )= S_i(v)\setminus \Big (\bigcup_{k=0}^{i-1} (S_{k}(v) \cap S_i(v)) \Big ).
$$ 
By Lemma~\ref{tech-2}, either $S_{k}(v) \cap S_i(v)=\emptyset$ or $S_{k}(v) \subseteq S_i(v)$. Therefore, $S_{i}^\star(v)=S_i(v)\setminus \Big (\bigcup_{k=0}^{i-1} S_{k,i}(v)  \Big )$. 
\end{proof}

In the following two lemmas, we consider some valuable properties of the sets $S_{k,j}(w)$ in the case that $w$ is a vertex adjacent from $v$.

\begin{lemma}
\label{tech-5}
Let $v\in V$ and $w\in S_1(v)$.   Let $0\le i,j \le D$ and let $k$, $0\le k\le j$. Assume that $S_{k,j}(w)\cap S_i(v)\neq \emptyset$. Then 
\begin{enumerate}
\item If $k=D$, then $S_{k,j}(w)=S_D(w)$. Moreover,
\begin{enumerate}
\item If $i=D$, then $S_{k,j}(w)= S_i(v)$  if  $G=B(d,D)$, while $S_{k,j}(w)\ne S_i(v)$  if  $G=K(d,D)$.
\item If $i<D$, then $S_i(v)\subseteq S_{k,j}(w)$.
\end{enumerate}

\item If $k\ne D$, then $S_i(v) \subseteq S_{k,j}(w)=S_{k}(w)$ if $k\ge i$, while $S_k(w)=S_{k,j}(w) \subseteq S_{i}(v)$ if $k<i$. Moreover,
\begin{enumerate}
\item If $k=i$, then $G=B(d,D)$,  $S_i(v)=S_i(w)$, and either $j=i$ or $j=i+1$.
\item If $k=i-1=j$, then $S_{k,j}(w)\cap S_i(v)=S_{i-1}(w)$.
\item If $k=i-1<j<D$,  then either $S_{i-1,i}(v)=\emptyset$ or $G=B(d,D)$ and $j=i$. Moreover,  if $S_{i-1,i}(v)\ne \emptyset$, then $S_{i-1,i}(v)=S_{i-1,i}(w)=S_{i-1}(v)=S_{i-1}(w)$.
\item If $k=i-1<j=D$,  then either $S_{i-1,i}(v)=\emptyset$ or $G=B(d,D)$. Moreover,  if $S_{i-1,i}(v)\ne \emptyset$ and $w_D=v_D$, then $i=j=D$ and $S_{D-1,D}(v)=S_{D-1,D}(w)=S_{D-1}(v)=S_{D-1}(w)$.
\item If $k<i-1$, then  there exists $k'\leq i-1$ such that $S_{k,j}(w)\subseteq S_{k',i}(v)$.
\end{enumerate}
\end{enumerate}
\end{lemma}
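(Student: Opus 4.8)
The plan is to establish the clauses of the lemma in turn, in each case reducing to a comparison of two of the sets $S_k(w)$ and $S_i(v)$ to which Lemma~\ref{tech-2} applies, and then extracting the sequence-level consequences via the dictionary furnished by $w\in S_1(v)$, namely $w_m=v_{m+1}$ for $1\le m\le D-1$ and $w_D=v_{D+1}$ (with $v_{D+1}\ne v_D$ when $G=K(d,D)$). Besides Lemmas~\ref{tech-1} and~\ref{tech-2}, I will repeatedly use the elementary inclusion $S_m(w)\subseteq S_{m+1}(v)$ (a walk of length $m$ from $w$, prefixed by the arc $v\to w$, is a walk of length $m+1$ from $v$), Remark~\ref{rem_def_S_kj} (in particular $S_{i,i}(\cdot)=S_i(\cdot)$, and $S_{i-1,i}(\cdot)=\emptyset$ in the Kautz case), and the sequence criteria of Remarks~\ref{rem_S_kj-bis} and~\ref{rem_S_kj-seq}. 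The observation used everywhere is that $S_{k,j}(w)\cap S_i(v)\ne\emptyset$ forces $S_{k,j}(w)\ne\emptyset$, hence $S_{k,j}(w)=S_k(w)$.

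For part (1), $k=D$ together with $k\le j\le D$ gives $j=D$, and $S_{D,D}(w)=S_D(w)$ by Remark~\ref{rem_def_S_kj}. If $i=D$ and $G=B(d,D)$ then $S_D(w)=V=S_D(v)$ by Lemma~\ref{tech-1}; if $i=D$ and $G=K(d,D)$ then $w_D=v_{D+1}\ne v_D$ because $w\in S_1(v)$, so $S_D(w)\ne S_D(v)$ by Lemma~\ref{tech-2}(3). If $i<D$ then either $S_D(w)=V$ (De Bruijn case) and $S_i(v)\subseteq S_D(w)$ trivially, or $G=K(d,D)$, in which case $S_D(w)\ne V$ and $S_i(v)\ne V$, so Lemma~\ref{tech-2}(2) gives $S_i(v)\subseteq S_D(w)$ or $S_i(v)\cap S_D(w)=\emptyset$, and the nonempty intersection forces the inclusion.

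For part (2), $k\ne D$ and $k\le j\le D$ give $k<D$, hence $S_k(w)\ne V$ by Lemma~\ref{tech-1}. Comparing $S_k(w)$ with $S_i(v)$ via Lemma~\ref{tech-2} (when $S_i(v)=V$, forcing $G=B(d,D)$ and $i=D$, so $k<i$ and the inclusion $S_k(w)\subseteq S_i(v)$ is trivial) yields the dichotomy, and the nonempty intersection selects the inclusion: $S_i(v)\subseteq S_k(w)$ when $k\ge i$ and $S_k(w)\subseteq S_i(v)$ when $k<i$. The five sub-cases are then handled as follows. In (2a), $k=i$ gives $S_i(v)\subseteq S_i(w)$, and since both sets have $d^i$ elements by Lemma~\ref{tech-1}, $S_i(v)=S_i(w)$; the criterion of Lemma~\ref{tech-2}(1) turns this into $v_{i+1}=v_{i+2}=\cdots=v_D=v_{D+1}$, impossible in a Kautz digraph, so $G=B(d,D)$; this equality also means $w$ ends with a run of $D-i+1\ge 2$ equal symbols, so $S_i(w)\subseteq S_{i+1}(w)$ and hence $S_i(w)\cap S_{i+1}(w)\ne\emptyset$, which via Remark~\ref{rem_S_kj-bis} rules out $j\ge i+2$, leaving $j\in\{i,i+1\}$. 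Case (2b) is immediate: $S_{i-1,i-1}(w)=S_{i-1}(w)$ and $S_{i-1}(w)\subseteq S_i(v)$, so the intersection equals $S_{i-1}(w)$. In (2c) and (2d), assume $S_{i-1,i}(v)\ne\emptyset$; then $G=B(d,D)$ by Remark~\ref{rem_def_S_kj} and $v_i=v_{i+1}=\cdots=v_D$ by Remark~\ref{rem_S_kj-seq}, so $w$ has the explicit form: the block $v_2\cdots v_{i-1}$, then a run of a single symbol, then $v_{D+1}$; writing $S_{i-1}(w)$, $S_i(w)$, $S_j(w)$ as initial-segment-constrained families and imposing the conditions for $S_{i-1,j}(w)=S_{i-1}(w)$ coming from Remark~\ref{rem_S_kj-bis} rules out $j>i$ when $j<D$ (case (2c)) and rules out $i<D$ when $j=D$ and $w_D=v_D$ (case (2d)); in the surviving configurations one reads off directly that $S_{i-1}(v)=S_{i-1}(w)$, whence all four sets coincide. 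Finally, in (2e) with $k<i-1$, let $k'$ be the largest index in $\{0,\dots,i-1\}$ with $S_k(w)\subseteq S_{k'}(v)$; this set is nonempty because $S_k(w)\subseteq S_{k+1}(v)$ and $k+1\le i-1$. Lemma~\ref{tech-2} applied to $S_{k'}(v)$ and $S_i(v)$ gives $S_{k'}(v)\subseteq S_i(v)$ (their intersection contains $S_k(w)$), while maximality of $k'$ shows $S_{k'}(v)\not\subseteq S_m(v)$ for every $m$ with $k'<m<i$; hence $S_{k',i}(v)=S_{k'}(v)$ and $S_{k,j}(w)=S_k(w)\subseteq S_{k',i}(v)$.

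I expect the main obstacle to be sub-cases (2c) and (2d) in the situation $S_{i-1,i}(v)\ne\emptyset$: there the abstract inclusion/disjointness conditions supplied by Lemma~\ref{tech-2} and Remark~\ref{rem_S_kj-bis} must be converted into concrete constraints on the explicit ``block $v_2\cdots v_{i-1}$, then a run of one symbol, then $v_{D+1}$'' shape of $w$, and one then eliminates the unwanted values of $j$ (respectively $i$) by a careful comparison of leading symbols, keeping the De Bruijn/Kautz dichotomy and the boundary indices $i=D$ and $j=D$ straight. The remaining steps are routine once Lemma~\ref{tech-2} and the three remarks are in hand.
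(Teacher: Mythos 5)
Your proposal is correct and follows essentially the same route as the paper's proof: each clause is reduced to the inclusion-or-disjointness dichotomy of Lemma~\ref{tech-2} together with Remarks~\ref{rem_def_S_kj}, \ref{rem_S_kj-bis} and \ref{rem_S_kj-seq}, and your treatment of (2c), (2d) and (2e) mirrors the paper's (deriving $w_D=v_D$ from the containment and the run $v_i=\cdots=v_D$, then contradicting $S_{i-1,j}(w)\ne\emptyset$ unless $j=i$, resp.\ $i=D$). The only real deviation is (2a), where the paper invokes Lemma~\ref{tech-3} whereas you argue directly from $|S_i(v)|=|S_i(w)|=d^i$ and the resulting run of equal symbols in $w$; both arguments are sound.
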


\begin{proof}
Let us prove statement (1). If $k=D$, then  $j=D$ and clearly $S_{k,j}(w)=S_D(w)$. If $G=B(d,D)$ we have $S_D(v)=S_D(w)=V$. If $G=K(d,D)$, then $S_D(v)\ne V$ and $S_D(w)\ne V$. Furthermore, since $w\in S_1(v)$, we have  $v_D=w_{D-1}\ne w_D$. Hence,  by applying Lemma~\ref{tech-2}, $S_D(v)\ne S_D(w)$.

Next we are going to prove statement (2). From now on assume that $k\ne D$. 

If $S_{k,j}(w)\cap S_i(v)\neq \emptyset$, then by the definition of $S_{k,j}(w)$ we get $S_{k,j}(w)=S_k(w)$, and hence, by applying Lemma~\ref{tech-2}, $S_k(w) \subseteq S_{i}(v)$ if $k<i$, while $S_i(v) \subseteq S_{k}(w)$ if $i \le k$, because $k<D$.

First let us consider statement (2.a). If $k=i$, then from our assumptions we get that $S_i(v)\cap S_i(w)\ne\emptyset$. Thus  the set $\Gamma^+_{i,i}(v)$ defined in Lemma~\ref{tech-3} is nonempty. Therefore, if $i<D$, then from Lemma~\ref{tech-3} we have $G=B(d,D)$ and $S_i(v)=S_i(w)$. To conclude the proof of statement (2.a) we must demonstrate that either $i=j$  or $i+1=j$. On one hand we have $k\le j$. On the other hand  we are assuming $k=i$. So, $i\le j$. Thus it only remains to prove that $j\le i+1$. Assume on the contrary that  $i+1<j$. Since $\Gamma^+_{i,i}(v)\ne\emptyset$, by applying again Lemma~\ref{tech-3} we have $S_{i-1}(w)\subseteq S_i(v)=S_i(w)\subseteq S_{i+1}(v)=S_{i+1}(w)\subseteq\cdots \subseteq S_j(v)=S_j(w)$. Therefore, $S_{i}(w)\subseteq S_{i+1}(w) \subseteq S_j(w)$ and thus, by Definition~\ref{def_S_kj}, we have $S_{i,j}(w)=\emptyset$. This contradicts the assumption $S_{k,j}(w)\cap S_i(v)\neq \emptyset$, because $k=i$.
 
Now let us prove (2.b). From Remark~\ref{rem_def_S_kj} we have $S_{i-1,i-1}(w)=S_{i-1}(w)$. Moreover, $S_{i-1}(w)\subseteq S_i(v)$ because $w\in S_1(v)$. So, statement (2.b) follows.

Next we demonstrate statements (2.c) and (2.d). Notice that $i\le j$ because $k=i-1$ and $k< j$. By the assumptions of the lemma we have  $w\in S_1(v)$ and $S_{i-1,j}(w)\cap S_i(v)\neq \emptyset$. Hence $S_{i-1,j}(w)=S_{i-1}(w)\subseteq S_{i}(v)$. Suppose that $S_{i-1,i}(v)\neq \emptyset$. Recall that this assumption implies $G=B(d,D)$ (see Remark~\ref{rem_def_S_kj}) and, moreover, from the definition of $S_{i-1,i}(v)$ it follows that $S_{i-1,i}(v)=S_{i-1}(v)\subseteq S_i(v)$. 

Let us consider first statement (2.c). So now we are assuming $j<D$. We have to prove that $j=i$ and that $S_{i-1,i}(v)=S_{i-1,i}(w)$ (because this last equality and the assumption $S_{i-1,i}(v)\neq \emptyset$ imply $S_{i-1,i}(w)\neq \emptyset$, and hence $S_{i-1,i}(v)=S_{i-1}(v)$ and $S_{i-1,i}(w)=S_{i-1}(w)$). Since $S_{i-1}(v)\subseteq S_i(v)$ and $i-1\le i<D$, we can apply  statement (1) of Lemma~\ref{tech-2} and we get that $v_{[i,D-1]}=v_{[i+1,D]}$. Moreover, $v_{[2,D]}=w_{[1,D-1]}$ because $w\in S_1(v)$. Hence on one hand we have $v_{i}=w_i=v_{i+1}= \dots = w_{D-1}=v_D$. On the other hand, from the definition of $S_{i-1,j}(w)$, in any case we have $S_{i-1,j}(w)\subseteq S_j(w)$. So, $S_{i-1,j}(w)\cap S_i(v)\subseteq S_i(v)\cap S_j(w)$. Therefore, $S_i(v)\cap S_j(w)\ne\emptyset$ because we are assuming $S_{i-1,j}(w)\cap S_i(v)\neq \emptyset$. Thus, since $i\le j<D$, we can apply once more Lemma~\ref{tech-2},  and now we get that $S_i(v) \subseteq S_j(w)$ and that $v_{[i+1,D-(j-i)]}=w_{[j+1,D]}$. In particular, $w_D=v_{D-(j-i)}=w_{D-(j-i)-1}$ and therefore
\begin{equation}
\label{equ-2}
v_i=w_{i}= v_{i+1}=\dots = w_{D-1}=v_D=w_D.
\end{equation} 
Observe that for $i\le l\le j< D$, equality (\ref{equ-2}) implies $w_{[i,D-l+i-1]}=w_{[l+1,D]}$ and $w_{[j+1,D-(j-l)]}=w_{[l+1,D]}$. Thus (again by Lemma~\ref{tech-2}) we have $S_{i-1}(w)\subseteq S_l(w)\subseteq S_j(w)$ for $i\le l\le j< D$. But if $j>i$ the definition of $S_{i-1,j}(w)$ would imply $S_{i-1,j}(w)=\emptyset$, a contradiction. Therefore, it must be $j=i$, as we wanted to show. To complete the proof of statement (2.c) in the case $j<D$, it only remains to show that $S_{i-1,i}(v)=S_{i-1,i}(w)$. But this is straightforward because by our assumptions we know that $S_{i-1}(w)=S_{i-1,j}(w)=S_{i-1,i}(w)$ and $S_{i-1}(v)=S_{i-1,i}(v)$, and, moreover, by (\ref{equ-2}) we have $v_{[i,D]}=w_{[i,D]}$, and so $S_{i-1}(v)=S_{i-1}(w)$. This completes the proof of (2.c).

Now we are going to prove (2.d). So, let us assume $j=D$. In this case we want to prove  that if $w_D=v_D$, then $i=D$  and   $S_{D-1}(v)=S_{D-1}(w)$ (as in the proof of (2.c) this last equality implies $S_{D-1,D}(v)=S_{D-1,D}(w)=S_{D-1}(v)=S_{D-1}(w)$). First let us show that $i=D$. On the contrary, assume that $i<D$. In that case, since $S_{i-1}(v)\subseteq S_i(v)$ and $i-1\le i<D$, we can apply again statement (1) of Lemma~\ref{tech-2} to get $v_{[i,D-1]}=v_{[i+1,D]}$ and thus $v_{[2,D]}=w_{[1,D-1]}$, because $w\in S_1(v)$. Thus, since $w_D=v_D$, equality (\ref{equ-2}) also holds. Therefore, we have $w_{[i,D-l+i-1]}=w_{[l+1,D]}$ and $w_{[j+1,D-(j-l)]}=w_{[l+1,D]}$ for $i\le l<D$. Thus (again by Lemma~\ref{tech-2}) we have $S_{i-1}(w)\subseteq S_l(w)\subseteq S_D(w)=V$ for $i\le l < D$. Since $i <D$ the definition of $S_{i-1,D}(w)$ implies $S_{i-1,D}(w)=\emptyset$, a contradiction. Therefore, it must be $i=D$, as we wanted to prove. It remains to show that $S_{D-1}(v)=S_{D-1}(w)$. But this is straightforward because $w_D=v_D$ and so $S_{D-1}(v)=S_{D-1}(w)$. 

Finally, let us prove statement (2.e). Let $k<i-1$ and assume that $S_{k,j}(w)\cap S_i(v)\neq \emptyset$. From  $S_{k,j}(w)\cap S_i(v)\neq \emptyset$ it follows that $S_{k,j}(w)=S_k(w)$ and $S_k(w)\cap S_i(v)\neq \emptyset$. Notice that  $S_k(w)\subseteq S_{k+1}(v)$ because $w\in S_1(v)$. Hence $S_{k+1}(v)\cap S_i(v)\neq \emptyset$ and so, by applying Lemma~\ref{tech-2}, it follows that $S_{k+1}(v)\subseteq S_i(v)$ because $k+1<i$. Let $k'=\max \{l \, : k+1\leq l <i \textrm{ and } S_{k+1}(v)\subseteq S_{l}(v)\subseteq S_i(v)\}$. From the definition we get $k'\leq i-1$ and   $S_{k+1}(v)\subseteq S_{k'}(v)= S_{k',i}(v)$. Therefore, $S_{k,j}(w)=S_k(w)\subseteq S_{k+1}(v)\subseteq S_{k',i}(v)$.
\end{proof}

\begin{lemma}
\label{tech-6}
Let $v\in V$ and $w\in S_1(v)$.  If $0\le k<i-1<D$ and $S_{k,i-1}(w)\neq \emptyset$, then there exists $k'\leq i-1$ such that $S_{k,i-1}(w)\subseteq S_{k',i}(v)$.
\end{lemma}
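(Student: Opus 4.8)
The plan is to obtain Lemma~\ref{tech-6} as an immediate consequence of statement~(2.e) of Lemma~\ref{tech-5}: in fact Lemma~\ref{tech-6} is essentially the specialisation of that statement to the choice $j=i-1$. The only thing that genuinely needs to be verified is that the hypothesis ``$S_{k,i-1}(w)\ne\emptyset$'' appearing in Lemma~\ref{tech-6} is strong enough to produce the hypothesis ``$S_{k,j}(w)\cap S_i(v)\ne\emptyset$'' that is required to invoke Lemma~\ref{tech-5} with $j=i-1$.

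First I would unfold the assumption. If $S_{k,i-1}(w)\ne\emptyset$, then by Definition~\ref{def_S_kj} we have $S_{k,i-1}(w)=S_k(w)$, and in particular $S_k(w)\subseteq S_{i-1}(w)$. Since $w\in S_1(v)$, concatenating the arc $v\to w$ with a walk of length $i-1$ starting at $w$ gives a walk of length $i$ starting at $v$, so $S_{i-1}(w)\subseteq S_i(v)$; hence $S_{k,i-1}(w)=S_k(w)\subseteq S_i(v)$, and therefore $S_{k,i-1}(w)\cap S_i(v)=S_{k,i-1}(w)\ne\emptyset$. Now I would apply Lemma~\ref{tech-5} with $j=i-1$: the index conditions $0\le k\le j\le D$ and $0\le i\le D$ all hold because $0\le k<i-1<D$; since $k<D$ we have $k\ne D$, so we are in case~(2); and the assumption $k<i-1$ places us precisely in sub-case~(2.e). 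Its conclusion gives an integer $k'\le i-1$ with $S_{k,i-1}(w)=S_{k,j}(w)\subseteq S_{k',i}(v)$, which is exactly the assertion of Lemma~\ref{tech-6}.

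Alternatively, one may give a self-contained proof by reproducing the short argument behind~(2.e): from $S_k(w)\subseteq S_{k+1}(v)$ and $S_k(w)\ne\emptyset$ one gets $S_{k+1}(v)\cap S_i(v)\ne\emptyset$, hence $S_{k+1}(v)\subseteq S_i(v)$ by Lemma~\ref{tech-2}; then one sets $k'=\max\{\,l : k+1\le l<i,\ S_{k+1}(v)\subseteq S_l(v)\subseteq S_i(v)\,\}$ and checks, using Remark~\ref{rem_S_kj-bis}, that the maximality of $k'$ forces $S_{k',i}(v)=S_{k'}(v)$, so that $S_{k,i-1}(w)=S_k(w)\subseteq S_{k+1}(v)\subseteq S_{k'}(v)=S_{k',i}(v)$. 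The only step that needs a little care is the deduction $S_{k+1}(v)\subseteq S_i(v)$ when $i=D$: for $G=B(d,D)$ it is trivial because $S_D(v)=V$, whereas for $G=K(d,D)$ one must invoke statement~(2) of Lemma~\ref{tech-2} instead of statement~(1). That separation of the two digraph families is the only genuine obstacle; everything else is routine bookkeeping with the definition of $S_{k,i}(\cdot)$, which is why I would favour the one-line reduction to Lemma~\ref{tech-5}(2.e).
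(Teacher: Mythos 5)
Your proposal is correct, and the reduction is not circular: the paper proves Lemma~\ref{tech-5} before Lemma~\ref{tech-6}, and the proof of statement~(2.e) of Lemma~\ref{tech-5} makes no use of Lemma~\ref{tech-6}. Your verification of the missing hypothesis is also sound: $S_{k,i-1}(w)=S_k(w)\subseteq S_{i-1}(w)\subseteq S_i(v)$ because $w\in S_1(v)$, so $S_{k,i-1}(w)\cap S_i(v)=S_{k,i-1}(w)\ne\emptyset$, and then (2.e) with $j=i-1$ yields exactly the conclusion. The paper does not take this shortcut: it gives a self-contained direct argument, which is essentially a re-run of the proof of (2.e) with one notable difference of routing. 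The paper deduces $S_{k+1}(v)\cap S_{i-1}(w)\ne\emptyset$ and applies statement~(1) of Lemma~\ref{tech-2} to the pair $(k+1,i-1)$, which is legitimate without any case distinction because $i-1<D$ by hypothesis; only afterwards does it pass to $S_{k+1}(v)\subseteq S_i(v)$ via $S_{i-1}(w)\subseteq S_i(v)$. Your self-contained variant instead applies Lemma~\ref{tech-2} directly to $S_{k+1}(v)\cap S_i(v)$, which, as you correctly note, requires separating the case $i=D$ (statement~(2) of Lemma~\ref{tech-2} for $K(d,D)$, triviality for $B(d,D)$). So your one-line reduction buys brevity and avoids duplicating the argument of (2.e); the paper's route through $S_{i-1}(w)$ buys a uniform appeal to statement~(1) of Lemma~\ref{tech-2} with no case analysis. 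Both are valid.
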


\begin{proof}
Let $k< i-1$ and assume that $S_{k,i-1}(w)\ne\emptyset$. It follows that $S_{k,i-1}(w)=S_k(w)\subseteq S_{i-1}(w)$. Hence $S_{k+1}(v)\cap S_{i-1}(w)\neq \emptyset$ because $S_k(w)\subseteq S_{k+1}(v)$. So, since $k+1\le i-1$, by applying Lemma~\ref{tech-2} we get $S_{k+1}(v)\subseteq S_{i-1}(w)$. Therefore, $S_{k+1}(v)\subseteq S_{i}(v)$ because $S_{i-1}(w)\subseteq S_{i}(v)$. Let $k'=\max \{l \, : k+1\leq l <i \textrm{ and } S_{k+1}(v)\subseteq S_{l}(v)\subseteq S_i(v)\}$. From the definition we get $k'\leq i-1$ and   $S_{k+1}(v)\subseteq S_{k'}(v)= S_{k',i}(v)$. Thus $S_{k,i-1}(w) \subseteq S_{k+1}(v)\subseteq S_{k',i}(v)$.
\end{proof}

In the following two lemmas, Lemma~\ref{tech-7} and Lemma~\ref{tech-7bis}, we provide a detailed description of the intersection sets $S_{i}^\star(v)\cap S_j^{\ast}(w)$ when $w$ is a vertex adjacent from $v$ and for $i,j\le D$ with $i-1\le j$. In fact,  Lemma~\ref{tech-7bis} is a refinement of Lemma~\ref{tech-7} in the case $j\ge i$ and  $i\ne D$.
\begin{lemma}
\label{tech-7}
Let $v\in V$ and $w\in S_1(v)$. Let $i,j\le D$ with $i-1\le j$. If $S_i(v)\cap S_j(w)\neq \emptyset$, then the intersection set $S_{i}^\star(v)\cap S_j^{\ast}(w)$ can be described as follows:
\begin{enumerate}
\item $\displaystyle S_{i-1}(w) \setminus\bigcup_{k=0}^{i-1}  S_{k,i}(v)$ if $j=i-1$.
\item $\displaystyle \Big(S_D(v)\cap S_D(w)\Big)\setminus \bigcup_{k=0}^{D-2} S_{k,D}(v)$ if $i=j=D$ and $G=K(d,D)$.
\item $\displaystyle V\setminus \Big( S_{D-1}(v)\cup S_{D-1}(w)\cup  \bigcup_{k=0}^{D-2} S_{k,D}(v)\Big )$ if $i=j=D$ and $G=B(d,D)$.
\item $\displaystyle S_{i}(v) \setminus \bigg(\Big(\bigcup_{k=0}^{i-1} S_{k,i}(v)\Big) \cup \Big(\bigcup_{k=i-1}^{j-1} S_{k,j}(w) \Big)\bigg)$ if  $j\ge i$ and $i\ne D$. 
\end{enumerate}
\end{lemma}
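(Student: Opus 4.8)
The plan is to start from the elementary identity $S_i^\star(v)\cap S_j^\star(w)=\bigl(S_i(v)\setminus\bigcup_{k<i}S_k(v)\bigr)\cap\bigl(S_j(w)\setminus\bigcup_{k<j}S_k(w)\bigr)$, which comes directly from (\ref{layer-previ}), and then to simplify each of the two ``set-minus'' parts using Lemma~\ref{tech-2} (dichotomy: each $S_k(v)$ is either contained in $S_i(v)$ or disjoint from it) together with Lemma~\ref{tech-4}, which already rewrites $S_i^\star(v)$ as $S_i(v)\setminus\bigcup_{k=0}^{i-1}S_{k,i}(v)$. The hypothesis $S_i(v)\cap S_j(w)\ne\emptyset$ guarantees, via Lemma~\ref{tech-2} again, that $S_i(v)\subseteq S_j(w)$ whenever $i\le j<D$ (and analogously in the boundary cases), so that intersecting with $S_j^\star(w)$ reduces to removing from $S_i(v)$ only those $S_{k,j}(w)$ that actually meet $S_i(v)$; the content of the lemma is exactly bookkeeping which indices $k$ survive in each of the four regimes.

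First I would dispose of case (1), $j=i-1$: here $S_i^\star(v)\cap S_{i-1}^\star(w)\subseteq S_{i-1}^\star(w)\subseteq S_{i-1}(w)$, and since $w\in S_1(v)$ forces $S_{i-1}(w)\subseteq S_i(v)$, one has $S_i^\star(v)\cap S_{i-1}^\star(w)=S_{i-1}^\star(w)\cap S_i^\star(v)=S_{i-1}(w)\cap\bigl(S_i(v)\setminus\bigcup_{k=0}^{i-1}S_{k,i}(v)\bigr)=S_{i-1}(w)\setminus\bigcup_{k=0}^{i-1}S_{k,i}(v)$, using that $S_{i-1}^\star(w)=S_{i-1}(w)\setminus\bigcup_{k<i-1}S_{k,i-1}(w)$ and Lemma~\ref{tech-6} to absorb those $S_{k,i-1}(w)$ into the $S_{k',i}(v)$ already being removed; one also needs that none of the removed $S_{k,i}(v)$ can swallow all of $S_{i-1}(w)$ in a way that is not reflected, but Remark~\ref{rem_S_kj-bis} handles the disjointness. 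For cases (2) and (3), $i=j=D$, I would use Lemma~\ref{tech-1} to know $S_D(w)=V$ in the De~Bruijn case (so only $S_D(v)$'s complement, i.e. the ``$k<D$'' layers, matters, giving the stated $V\setminus(S_{D-1}(v)\cup S_{D-1}(w)\cup\bigcup_{k\le D-2}S_{k,D}(v))$ once one checks via Lemma~\ref{tech-5}(1) that the layers $S_{k,D}(w)$ for $k<D-1$ are already among the $S_{k',D}(v)$, while $S_{D-1}(w)$ is genuinely new), and statement (3) of Lemma~\ref{tech-2} in the Kautz case (where $S_D(v)\ne S_D(w)$ but $S_D(v)\cap S_D(w)\ne\emptyset$), again pruning the $w$-layers through Lemma~\ref{tech-5}.

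The main case, and the one I expect to be the real obstacle, is (4): $j\ge i$, $i\ne D$. Starting from $S_i^\star(v)\cap S_j^\star(w)=\bigl(S_i(v)\setminus\bigcup_{k=0}^{i-1}S_{k,i}(v)\bigr)\cap\bigl(S_j(w)\setminus\bigcup_{k=0}^{j-1}S_{k,j}(w)\bigr)$ and using $S_i(v)\subseteq S_j(w)$ (from $S_i(v)\cap S_j(w)\ne\emptyset$ and Lemma~\ref{tech-2}(1), noting $j\le D$ but the intersection being nonempty forces $S_j(w)\ne V$ unless $j=D$, which is handled since $i\ne D$ means $i\le D-1<j$ only when $j=D$ — a subcase one treats by the same dichotomy), this becomes $S_i(v)\setminus\bigl(\bigcup_{k=0}^{i-1}S_{k,i}(v)\cup\bigcup_{k=0}^{j-1}(S_{k,j}(w)\cap S_i(v))\bigr)$. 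The delicate part is to show the range $k=0,\dots,j-1$ of the second union can be replaced by $k=i-1,\dots,j-1$: for $k<i-1$ this is exactly Lemma~\ref{tech-5}(2.e) (each such $S_{k,j}(w)\cap S_i(v)$ lies inside some $S_{k',i}(v)$ with $k'\le i-1$, hence is already removed), and one must separately verify that the index $k=i-1$ is correctly kept or dropped according to Lemma~\ref{tech-5}(2.b)--(2.d). Assembling these citations carefully — tracking which lemma applies in which sub-regime of $(i,j)$ and confirming no genuine piece of the second union is lost — is where the bookkeeping is heaviest; the underlying geometry is entirely controlled by the containment/disjointness dichotomy of Lemma~\ref{tech-2}.
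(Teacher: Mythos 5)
Your plan follows essentially the same route as the paper's proof: expand both starred sets via Lemma~\ref{tech-4}, use the containment dichotomy of Lemma~\ref{tech-2} to reduce the ambient set to $S_{i-1}(w)$, $S_i(v)$, or $S_D(v)\cap S_D(w)$ as appropriate, and absorb the low-index layers of $w$ into the $S_{k',i}(v)$ via Lemma~\ref{tech-6} in case (1) and via statement (2.e) of Lemma~\ref{tech-5} in the remaining cases. The only slips are minor: in the $i=j=D$ case the absorption is given by statement (2.e) of Lemma~\ref{tech-5}, not statement (1); and no separate analysis of the index $k=i-1$ is needed for statement (4), since that index is simply retained in the union (the finer case analysis is deferred to Lemma~\ref{tech-7bis}).
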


Before proving the lemma let us highlight the following two remarks.

\begin{remark}  Observe that in the above expressions we have $S_{k,i}(v)\subseteq S_i(v)$ for $0\le k\le i-1$, because of the definition of $S_{k,i}(v)$. Furthermore, by Lemma~\ref{tech-2}, either $S_{k,i}(v)\cap S_{i-1}(w)=\emptyset$ or $S_{k,i}(v)\subseteq S_{i-1}(w)$, $0\le k\le i-1$.
\end{remark}

 \begin{remark}In some cases the expressions of $S_{i}^\star(v) \cap S_j^{\ast}(w)$ given in Lemma~\ref{tech-7} can be further simplified or can be equal to $\emptyset$. For instance, as in Example~\ref{def-Kautz} let $G=K(d,12)$, $v=\alpha\beta\gamma\alpha\beta\gamma\alpha\beta\gamma\alpha\beta\gamma$,  $w=\beta\gamma\alpha\beta\gamma\alpha\beta\gamma\alpha\beta\gamma\alpha\in S_1(v)$, and let us consider the intersection sets $S_{4}^\star(v) \cap S_6^{\ast}(w)$ and $S_{1}^\star(v) \cap S_6^{\ast}(w)$.

 First let us consider $S_{4}^\star(v) \cap S_6^{\ast}(w)$. By Lemma~\ref{tech-2} we have $S_4(v)\cap S_6(w)=S_4(v)\ne\emptyset$ because $v_{[5,10]}=w_{[7,12]}$. Besides, we can verify that for $0\le k\le 3$ the only nonempty set $S_{k,4}(v)$ is $S_{1,4}(v)=S_1(v)$; and that for $3\le k\le 5$ the only nonempty set $S_{k,6}(w)$ is $S_{3,6}(w)=S_3(w)$. So, statement (4) of Lemma~\ref{tech-7} gives $S_{4}^\star(v) \cap S_6^{\ast}(w)=S_4(v)\setminus\left(S_1(v)\cup S_3(w)\right)$. However, this expression can be further simplified to $S_{4}^\star(v) \cap S_6^{\ast}(w)=S_4(v)\setminus S_3(w)$, because in this example we have $S_1(v)\subseteq S_3(w)$ (see (\ref{chain})). 

 Now let us consider $S_1^\star(v)\cap S_6^\star(w)$. By Lemma~\ref{tech-2} we have $S_1(v)\cap S_6(w)=S_1(v)\ne\emptyset$ because $v_{[2,7]}=w_{[7,12]}$, and by statement (4) of Lemma~\ref{tech-7}, the intersection $S_1^\star(v)\cap S_6^\star(w)$ can be expressed as $S_1^\star(v)\cap S_6^\star(w)=S_1(v)\setminus S_3(w)$. Therefore $S_1^\star(v)\cap S_6^\star(w)=\emptyset$, because $S_1(v)\subseteq S_3(w)$. What happens in this case is that $S_1(v)\subseteq S_3(w)=S_{3,6}(w)$, and hence statement (2) of Proposition~\ref{v2-second} or \ref{v2-second-cas 2} implies $S_1^\star(v)\cap S_6^\star(w)=\emptyset$.
 \end{remark}

\begin{proof}
By Lemma~\ref{tech-4} we have $ S_{i}^\star(v)=S_i(v)\setminus \Big ( \bigcup_{k=0}^{i-1} S_{k,i}(v)  \Big )$ and $ S_{j}^\star(w)=S_j(w)\setminus \Big ( \bigcup_{k=0}^{j-1} S_{k,j}(w)  \Big )$. Therefore,
$$
S_{i}^\star(v)\cap S_j^{\ast}(w)=\big(S_{i}(v)\cap S_j(w)\big) \setminus \bigg(\Big(\bigcup_{k=0}^{i-1} S_{k,i}(v)\Big) \cup \Big(\bigcup_{k=0}^{j-1} S_{k,j}(w)\Big) \bigg).
$$
By applying Lemma~\ref{tech-2}, it follows that $S_i(v)\subseteq S_j(w)$ if $i\le j\le D$ and $i<D$, while $S_j(w)\subseteq S_i(v)$ if $j=i-1$. Thus
$$
S_{i}^\star(v)\cap S_j^{\ast}(w)=\left \{
\begin{array}{l}
\displaystyle S_{i-1}(w) \setminus
\bigg(\Big(\bigcup_{k=0}^{i-1} S_{k,i}(v)\Big) 
\cup \Big(\bigcup_{k=0}^{i-2} S_{k,i-1}(w)\Big) \bigg) \textrm{ if }  j=i-1;\\[5mm]
\displaystyle \Big(S_{D}(v)\cap S_D(w)\Big) \setminus
\bigg(\Big(\bigcup_{k=0}^{D-1} S_{k,D}(v)\Big) 
\cup \Big(\bigcup_{k=0}^{D-1} S_{k,D}(w)\Big) \bigg) \\\hfill\textrm{ if } j=i=D; \\[3mm]
\displaystyle S_{i}(v) \setminus
\bigg(\Big(\bigcup_{k=0}^{i-1} S_{k,i}(v)\Big) 
\cup \Big(\bigcup_{k=0}^{j-1} S_{k,j}(w)\Big) \bigg) \textrm{ if } j\ge i \textrm{ and  } i\ne D. 
\end{array}
\right .
$$
By Lemma~\ref{tech-6} we have $\bigcup_{k=0}^{i-2} S_{k,i-1}(w)\subseteq \bigcup_{k=0}^{i-1} S_{k,i}(v)$, and hence the description of $S_{i}^\star(v) \cap S_{j}^{\ast}(w)$ formulated in statement (1) follows.

Next we are going to prove  the expression of $S_{i}^\star(v) \cap S_{j}^{\ast}(w)$ given in statements (2) and (3). So assume that $i=j=D$. Hence  
$$
S_{D}^\star(v)\cap S_D^{\ast}(w)=\Big(S_{D}(v)\cap S_D(w)\Big) \setminus
\bigg(\Big(\bigcup_{k=0}^{D-1} S_{k,D}(v)\Big) 
\cup \Big(\bigcup_{k=0}^{D-1} \big(S_{k,D}(w)\cap S_D(v)\big)\Big) \bigg).
$$
By statement (2.e) of Lemma~\ref{tech-5}, if $0\le k\le D-2$ and $S_{k,D}(w)\cap S_D(v)\neq \emptyset$, then there exists $k'\leq D-1$ such that $S_{k,D}(w)\subseteq S_{k',D}(v)$. Hence $S_{k,D}(w)\cap S_D(v)\subseteq S_{k',D}(v)\cap S_D(v)\subseteq S_{k',D}(v)$, and so 
$$
S_{D}^\star(v)\cap S_D^{\ast}(w)
=\Big(S_{D}(v)\cap S_D(w)\Big) \setminus
\bigg(\Big(\bigcup_{k=0}^{D-1} S_{k,D}(v)\Big) 
\cup \big(S_{D-1,D}(w)\cap S_D(v)\big) \bigg).
$$
If $G=K(d,D)$, then from Remark~\ref{rem_def_S_kj} we have $S_{D-1,D}(v)=S_{D-1,D}(w)=\emptyset$, and so the description given in statement (2) follows.  If $G=B(d,D)$, then from Remark~\ref{rem_def_S_kj} we have $S_{D-1,D}(v)=S_{D-1}(v)$, $S_{D-1,D}(w)=S_{D-1}(w)$, and from Lemma~\ref{tech-1} we have $S_D(v)=S_D(w)=V$. Hence statement (3) follows.

Now we are going to prove that if $j\ge i\ne D$, then  the  set $S_{i}^\star(v)\cap S_j^{\ast}(w)$ can be expressed as stated in statement (4). So, assume that $j\ge i\ne D$. Hence 
\begin{eqnarray*}
\lefteqn{S_{i}^\star(v)\cap S_j^{\ast}(w)}\\
&&=S_{i}(v) \setminus
\bigg(\Big(\bigcup_{k=0}^{i-1} S_{k,i}(v)\Big) 
\cup \Big(\bigcup_{k=0}^{i-2} S_{k,j}(w)\Big)
\cup \Big(\bigcup_{k=i-1}^{j-1} S_{k,j}(w)\Big)\bigg)\\
&&
=S_{i}(v) \setminus\bigg(\Big(\bigcup_{k=0}^{i-1} S_{k,i}(v)\Big) 
\cup \Big(\bigcup_{k=0}^{i-2} \big(S_{k,j}(w)\cap S_i(v)\big) \Big)
\cup \Big(\bigcup_{k=i-1}^{j-1} S_{k,j}(w)\Big)\bigg).
\end{eqnarray*}
By statement (2.e) of Lemma~\ref{tech-5}, if $0\le k\le i-2$ and $S_{k,j}(w)\cap S_i(v)\neq \emptyset$, then there exists $k'\leq i-1$ such that $S_{k,j}(w)\subseteq S_{k',i}(v)$. Hence $S_{k,j}(w)\cap S_i(v)\subseteq S_{k',i}(v)\cap S_i(v)\subseteq S_{k',i}(v)$, and so 
\begin{equation*}
\label{descri}
S_{i}^\star(v)\cap S_j^{\ast}(w)=S_{i}(v) \setminus\bigg(\Big(\bigcup_{k=0}^{i-1} S_{k,i}(v)\Big) \cup \Big(\bigcup_{k=i-1}^{j-1} S_{k,j}(w)\Big)\bigg),
\end{equation*}
as claimed in statement (4).
\end{proof}

\begin{lemma}
\label{tech-7bis}
Let $v\in V$ and $w\in S_1(v)$. Let $j\ge i$ and $i\ne D$. Let $S_i(v)\cap S_j(w)\neq \emptyset$ and $S_i(v)\not\subseteq S_{k,j}(w)$ for $i\leq  k \le j-1$.  Then $S_{i}^\star(v)\cap S_j^{\ast}(w)$ can be described as
\begin{equation*}
S_{i}^\star(v)\cap S_j^{\ast}(w)= S_{i}(v) \setminus\bigg(\Big(\bigcup_{k=0}^{i-2} S_{k,i}(v)\Big)\cup S'  \bigg),
\end{equation*}
where $S'\subseteq S_i(v)$ is either $\emptyset$, or  $S_{i-1}(v)$, or  $S_{i-1}(w)$, or $S_{i-1}(v) \cup S_{i-1}(w)$. Namely 
\begin{enumerate}
\item If $S_{i-1,j}(w)=\emptyset$ and $v_{[i,D-1]}\ne v_{[i+1,D]}$, then $S'=\emptyset$.
 \item If $S_{i-1,j}(w)=\emptyset$ and $v_{[i,D-1]}=v_{[i+1,D]}$, then $S'=S_{i-1}(v)$.
\item If $S_{i-1,j}(w)\ne \emptyset$ and $v_{[i,D-1]}\ne v_{[i+1,D]}$, then $S'=S_{i-1}(w)$.
\item If $S_{i-1,j}(w)\ne \emptyset$, $v_{[i,D-1]}=v_{[i+1,D]}$, and $j<D$, then $S'=S_{i-1}(v)=S_{i-1}(w)$.
\item If $S_{i-1,j}(w)\ne \emptyset$, $v_{[i,D-1]}=v_{[i+1,D]}$, and $j=D$, then $S'=S_{i-1}(v) \cup S_{i-1}(w)$ and $S_{i-1}(v) \cap S_{i-1}(w)=\emptyset$.
\end{enumerate}
\end{lemma}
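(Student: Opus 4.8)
The plan is to start from statement (4) of Lemma~\ref{tech-7}, which under our hypotheses ($j\ge i$, $i\ne D$, $S_i(v)\cap S_j(w)\ne\emptyset$) gives
\[
S_{i}^\star(v)\cap S_j^{\ast}(w)= S_{i}(v) \setminus\bigg(\Big(\bigcup_{k=0}^{i-1} S_{k,i}(v)\Big)\cup \Big(\bigcup_{k=i-1}^{j-1} S_{k,j}(w)\Big)\bigg),
\]
and then to prune this down to the asserted form. First I would peel off the top index of each union, writing $\bigcup_{k=0}^{i-1} S_{k,i}(v)=\big(\bigcup_{k=0}^{i-2} S_{k,i}(v)\big)\cup S_{i-1,i}(v)$ and $\bigcup_{k=i-1}^{j-1} S_{k,j}(w)=S_{i-1,j}(w)\cup\big(\bigcup_{k=i}^{j-1} S_{k,j}(w)\big)$, so that the set $S'$ will end up being $S_{i-1,i}(v)\cup S_{i-1,j}(w)$, once the terms $S_{k,j}(w)$ with $i\le k\le j-1$ have been shown to be irrelevant.

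The first substantive step is precisely to check that those discarded terms contribute nothing after intersecting with $S_i(v)$. For $i\le k\le j-1$ we have $k\ne D$, so the relevant clause of Lemma~\ref{tech-5}(2) applies: if $S_{k,j}(w)\cap S_i(v)$ were nonempty it would force $S_i(v)\subseteq S_{k,j}(w)$, contradicting the hypothesis $S_i(v)\not\subseteq S_{k,j}(w)$. Hence $S_{k,j}(w)\cap S_i(v)=\emptyset$ for $i\le k\le j-1$, and since we subtract from $S_i(v)$ these sets may be deleted. On the other hand, since $w\in S_1(v)$ we always have $S_{i-1}(w)\subseteq S_i(v)$, so $S_{i-1,j}(w)$ — which is either $S_{i-1}(w)$ or $\emptyset$ — satisfies $S_{i-1,j}(w)\cap S_i(v)=S_{i-1,j}(w)$. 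This yields $S'=S_{i-1,i}(v)\cup S_{i-1,j}(w)\subseteq S_i(v)$.

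It then remains to identify $S'$ in each of the five cases. By Remark~\ref{rem_S_kj-seq}, $S_{i-1,i}(v)=S_{i-1}(v)$ when $v_{[i,D-1]}=v_{[i+1,D]}$ and $S_{i-1,i}(v)=\emptyset$ otherwise; and $S_{i-1,j}(w)$ equals $S_{i-1}(w)$ exactly when it is nonempty. Cases (1), (2), (3) are then immediate. In case (4) both terms are nonempty and $j<D$, so $S_{i-1,i}(v)\ne\emptyset$ lets me invoke Lemma~\ref{tech-5}(2.c) (its hypothesis $S_{i-1,j}(w)\cap S_i(v)=S_{i-1}(w)\ne\emptyset$ holds), which gives $S_{i-1}(v)=S_{i-1}(w)$; hence $S'=S_{i-1}(v)=S_{i-1}(w)$. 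In case (5), $j=D$ and again $S_{i-1,i}(v)\ne\emptyset$; Lemma~\ref{tech-5}(2.d) combined with $i\ne D$ forces $w_D\ne v_D$. The condition $v_{[i,D-1]}=v_{[i+1,D]}$ means $v_i=v_{i+1}=\cdots=v_D$, while $w\in S_1(v)$ gives $w_{[1,D-1]}=v_{[2,D]}$; comparing the fixed prefixes of $S_{i-1}(v)$ and $S_{i-1}(w)$ (they agree in the first $D-i$ positions, but the $(D-i+1)$-th position reads $v_D$ for $S_{i-1}(v)$ and $w_D\ne v_D$ for $S_{i-1}(w)$) shows $S_{i-1}(v)\cap S_{i-1}(w)=\emptyset$, as claimed.

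The main obstacle I anticipate is not a single hard idea but the bookkeeping: using the hypothesis $S_i(v)\not\subseteq S_{k,j}(w)$ with exactly the right range of $k$ — it covers $k\ge i$ but deliberately not $k=i-1$, which is why the term $S_{i-1,j}(w)$ survives into $S'$ — and correctly marshalling the several sub-cases of Lemma~\ref{tech-5}(2), in particular (2.c) for $j<D$ and (2.d) for $j=D$, to pin down the equality $S_{i-1}(v)=S_{i-1}(w)$ in case (4) and the disjointness $S_{i-1}(v)\cap S_{i-1}(w)=\emptyset$ in case (5).
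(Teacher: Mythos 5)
Your proposal is correct and follows essentially the same route as the paper's proof: start from statement (4) of Lemma~\ref{tech-7}, use Lemma~\ref{tech-5}(2) together with the hypothesis $S_i(v)\not\subseteq S_{k,j}(w)$ to delete the terms with $i\le k\le j-1$, identify $S'=S_{i-1,i}(v)\cup S_{i-1,j}(w)$ via Remark~\ref{rem_S_kj-seq}, and settle cases (4) and (5) with Lemma~\ref{tech-5}(2.c) and (2.d) respectively. (Incidentally, you cite the correct clause of Lemma~\ref{tech-5}(2) for the pruning step, where the paper's text refers to (2.e).)
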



\begin{proof}
From statement (4) of Lemma~\ref{tech-7}, the intersection set $S_{i}^\star(v) \cap S_j^{\ast}(w)$ can be written as 
\begin{eqnarray}
\label{eq1-proof}
\nonumber \lefteqn{S_{i}^\star( v) \cap S_j^{\ast}(w)=}\\
&&=S_{i}(v) \setminus
\bigg(\Big(\bigcup_{k=0}^{i-1} S_{k,i}(v)\Big) 
\cup \Big(S_{i-1,j}(w) \cap S_i(v)\Big)
\cup \Big(\bigcup_{k=i}^{j-1} \big (S_{k,j}(w) \cap S_i(v) \big)\Big)\bigg)
\end{eqnarray}
if $i<j$, and
\begin{equation}
\label{eq2-proof}
S_{i}^\star( v) \cap S_j^{\ast}(w)=
S_{i}(v) \setminus
\bigg(\Big(\bigcup_{k=0}^{i-1} S_{k,i}(v)\Big) 
\cup \Big(S_{i-1,j}(w) \cap S_i(v) \Big)\bigg)
\end{equation}
if $i=j$. If $S_i(v)\not \subseteq S_{k,j}(w)$ for  $i\leq  k \le j-1$, then from statement (2.e) of Lemma~\ref{tech-5} we get that $S_{k,j}(w) \cap S_i(v)=\emptyset$ for $i\le k \le j-1$. Therefore, our assumptions imply that the following equality holds both for $i<j$ and for $i=j$:
\begin{align*}
S_{i}^\star( v) \cap S_j^{\ast}(w) &= S_{i}(v) \setminus \bigg(\Big(\bigcup_{k=0}^{i-1} S_{k,i}(v)\Big) \cup \Big(S_{i-1,j}(w) \cap S_i(v) \Big)\bigg)=S_{i}(v) \setminus \bigg(\Big(\bigcup_{k=0}^{i-2} S_{k,i}(v)\Big)\cup S'\bigg),
\end{align*}
where $S'=S_{i-1,i}(v)\cup \left(S_{i-1,j}(w) \cap S_i(v) \right)$.

Observe that if  $S_{i-1,j}(w)=\emptyset$, then $S'=S_{i-1,i}(v)$. Therefore, statements (1) and (2) follow from Remark~\ref{rem_S_kj-seq}. Moreover, by the definition of $S_{i-1,i}(v)$, we have $S'=S_{i-1,i}(v)\subseteq S_i(v)$.

From now on we assume  that $S_{i-1,j}(w)\ne\emptyset$. In this case we have $S'=S_{i-1,i}(v)\cup \left(S_{i-1}(w) \cap S_i(v) \right)$, and, so, $S'=S_{i-1,i}(v)\cup S_{i-1}(w)\subseteq S_i(v)$ because $S_{i-1}(w) \subseteq S_i(v)$ (recall that $w\in S_1(v)$). 

By Remark~\ref{rem_S_kj-seq}, if  $v_{[i,D-1]}\ne v_{[i+1,D]}$, then $S'= S_{i-1}(w)$, proving statement (3). Therefore, the proof of the lemma will be completed by demonstrating statements  (4) and (5).

Thus assume that $S_{i-1,j}(w)\ne \emptyset$ and $v_{[i,D-1]}\ne v_{[i+1,D]}$, or, equivalently, assume that $S_{i-1,j}(w)\ne \emptyset$ and $S_{i-1,i}(v)\ne \emptyset$. Hence we have $S'=S_{i-1,i}(v)\cup S_{i-1}(w)=S_{i-1}(v)\cup S_{i-1}(w)$. To prove (4) and (5) we are going to apply  Lemma~\ref{tech-5} with $k=i-1$. Observe that we are under the assumptions of this lemma because, since $S_{i-1,j}(w)\ne \emptyset$ and $w\in S_1(v)$, then $S_{i-1,j}(w)\cap S_i(v)=S_{i-1}(w)\cap S_i(v)=S_{i-1}(w)\ne \emptyset$.

Let us prove (4). If $j<D$, since we are assuming $S_{i-1,i}(v)\ne \emptyset$, from statement (2.c) of Lemma~\ref{tech-5}, we conclude that $j=i$ and $S_{i-1,i}(v)=S_{i-1,i}(w)=S_{i-1}(v)=S_{i-1}(w)$. Thus $S'=S_{i-1}(v)=S_{i-1}(w)$, as we wanted to prove.

Finally, let us prove (5). If $j=D$, since we are assuming $S_{i-1,i}(v)\ne \emptyset$, now from statement (2.d) of Lemma~\ref{tech-5}, we get $w_D\ne v_D$. In particular we have $v_{[i, D]}\ne w_{[i,D]}$. So, by  statement (1) of Lemma~\ref{tech-2}, we have $S_{i-1}(v) \cap S_{i-1}(w)=\emptyset$. This completes the proof of the lemma.
\end{proof}

\subsection{Proof of Proposition~\ref{propo 2.3}}
\label{proof-2.3}
On one hand, from Lemma~\ref{tech-4} we have $S_{i}^\star(v)=S_i(v)\setminus \Big (\bigcup_{k=0}^{i-1} S_{k,i}(v)  \Big )$. On the other hand, from the definition of $S_{k,i}(v)$ we have $S_{k,i}(v) \subseteq S_i(v)$. Therefore, 
\begin{equation*}
\label{card-disj}
|S_{i}^\star(v)|= \Big | S_i(v)\setminus \Big (\bigcup_{k=0}^{i-1} S_{k,i}(v) \Big ) \Big |=|S_i(v)|-\Big |\bigcup_{k=0}^{i-1} S_{k,i}(v) \Big|.
\end{equation*}
By Remark~\ref{void}, if $k_1\ne k_2$, then $S_{k_1,i}(v) \cap S_{k_2,i}(v)=\emptyset$. It follows that $\Big |\bigcup_{k=0}^{i-1} S_{k,i}(v) \Big|=\sum _{k=0}^{i-1} |S_{k,i}(v)|$. Therefore, from the definition of $S_{k,i}(v)$ and by Lemma~\ref{tech-1}, we have 
$$
|S_{i}^\star(v)|=|S_i(v)|-\sum _{k=0}^{i-1} |S_{k,i}(v)|=d^i-\sum _{k=0}^{i-1} a_{k}d^{k},
$$
where the coefficients $a_{k}$ are $0$ or $1$, and $a_{k}=1$ if and only if $S_{k,i}(v)\neq\emptyset$. Clearly, if $i=D$, then $S_{i-1,i}(v)\ne\emptyset$ if and only if $G=B(d,D)$. To conclude, assume that $i<D$. In such a case we have $a_{i-1}=1$ if and only if $S_{i-1,i}(v)\neq\emptyset$; if and only if $S_{i-1}(v)\subseteq S_i(v)$; if and only if  $v_{[i,D-1]}=v_{[i+1,D]}$ (the last equivalence follows from statement (1) of Lemma~\ref{tech-2} which can be applied because $i<D$). Therefore we conclude that if $i<D$, then $a_{i-1}=1$ if and only if $G=B(d,D)$ and $v_i=v_{i+1}=\cdots=v_D$. This completes the proof of Proposition~\ref{propo 2.3}.

\subsection{Proof of Proposition~\ref{v2-first}}
We have to prove that there exists at most one integer $j_0$, $i\le j_0\le D$, such that $S_{i}^\star(v) \cap S_{j_0}^{\star}(w)\ne \emptyset$, because if so, statements (1) and (2) follow. To prove that, let us demonstrate that if $S_{i}^\star(v) \cap S_{j}^{\star}(w)\ne \emptyset$ for some $j$, $i\le j<D$, then $S^\ast_i(v)\cap S^\ast_{j'}(w)= \emptyset$ for all integer $j'$ such that $j<j'\le D$. Thus assume $S_{i}^\star(v) \cap S_{j}^{\star}(w)\ne \emptyset$ and let $j<j'\le D$. On one hand, if $S_{i}^\star(v) \cap S_{j}^{\star}(w)\ne \emptyset$, then $S_i(v)\cap S_{j}(w)\neq \emptyset$ and, since $i<D$, we conclude from Lemma~\ref{tech-2} that $S_i(v)\subseteq S_{j}(w)$. On the other hand, by definition we have $S_{j'}^\star(w)=S_{j'}(w)\setminus \left (\bigcup_{k=0}^{j'-1}S_{k}(w) \right )$ and, since  $j<j'$, we get that $S_{j}(w)\cap S_{j'}^\star(w)=\emptyset$. Thus we have $S_{i}(v)\cap S_{j'}^\star(w)=\emptyset$, because $S_i(v)\subseteq S_j(w)$, and therefore we conclude that $S_{i}^\star(v)\cap S_{j'}^\star(w)=\emptyset$, as we wanted to prove.

\subsection{Proof of statement (1) of Propositions~\ref{v2-second} and \ref{v2-second-cas 2}}  
Let $i=j=D$. We have to prove that if $d\ge 3$, then $S_{i}^\star(v) \cap S_{j}^{\star}(w)\ne\emptyset$; whereas if $d=2$, then $S_{i}^\star(v) \cap S_{j}^{\star}(w)\ne\emptyset$ if and only if $G=K(d,D)$ or $v_D=w_D$. Equivalently, we must demonstrate that if $G=K(d,D)$, then $S_{i}^\star(v) \cap S_{j}^{\star}(w)\ne\emptyset$; while if $G=B(d,D)$, then $S_{i}^\star(v) \cap S_{j}^{\star}(w)\ne\emptyset$ if and only if $d\ge 3$ or $v_D=w_D$.
 
If $G$ is the Kautz digraph $K(d,D)$, then, by statement (3) of Lemma~\ref{tech-2}, we have $S_{D}(v)\cap S_D(w)\ne\emptyset$ and $|S_D(v)\cap S_D(w)|= d^D-d^{D-1}$. Since $S_{D}(v)\cap S_D(w)\ne\emptyset$, by statement (2) of Lemma~\ref{tech-7}, the intersection $S_{D}^\star(v)\cap S_D^{\ast}(w)$ can be expressed as
$$
S_{D}^\star(v)\cap S_D^{\ast}(w)=\Big(S_D(v)\cap S_D(w)\Big)\setminus \bigcup_{k=0}^{D-2} S_{k,D}(v).
$$
Moreover, by Definition~\ref{def_S_kj} and Lemma~\ref{tech-1}, we have either $S_{k,D}(v)=\emptyset$ or $|S_{k,D}(v)|=|S_k(v)|=d^k$. Therefore the cardinality of the union $\bigcup_{k=0}^{D-2} S_{k,D}(v)$ can be bounded as follows:
$$
\left |\bigcup_{k=0}^{D-2} S_{k,D}(v)\right | \le \sum_{k=0}^{D-2} d^k=\frac{d^{D-1}-1}{d-1},
$$
and hence
\begin{equation*}
|S_{D}^\star(v)\cap S_D^{\ast}(w)|\ge \left(d^D-d^{D-1}\right)-\frac{d^{D-1}-1}{d-1}=\frac{(d-2) d^D+1}{d-1}>0.
\end{equation*}
In particular, we get that if $G=K(d,D)$, then $S_{D}^\star(v)\cap S_D^{\ast}(w)\ne\emptyset$, as we wanted to prove.

Now let us assume $G=B(d,D)$. We must demonstrate that, in this case, we have $S_{D}^\star(v) \cap S_{D}^{\star}(w)\ne\emptyset$ if and only if $d\ge 3$ or $v_D=w_D$.

If $G=B(d,D)$,  then $S_D(v)\cap S_D(w)=V$ and, by statement (3) of Lemma~\ref{tech-7}, we can write $S_{D}^\star(v)\cap S_D^{\ast}(w)$ as
\begin{equation}
\label{d2-empty}
S_{D}^\star(v)\cap S_D^{\ast}(w)=V\setminus \Big( S_{D-1}(v)\cup S_{D-1}(w)\cup  \bigcup_{k=0}^{D-2} S_{k,D}(v) \Big ).
\end{equation}

By taking into account again  that either $S_{k,D}(v)=\emptyset$ or $|S_{k,D}(v)|=|S_k(v)|=d^k$, we have 
\begin{eqnarray*}
\lefteqn{\left |S_{D-1}(v)\cup S_{D-1}(w)\cup  \bigcup_{k=0}^{D-2} S_{k,D}(v)\right|}\\
&&  \le \left |S_{D-1}(v)\cup S_{D-1}(w)\right |+\sum_{k=0}^{D-2} \left|S_{k,D}(v)\right | 
\le \left |S_{D-1}(v)\cup S_{D-1}(w)\right |+\frac{d^{D-1}-1}{d-1},
\end{eqnarray*}
and thus
\begin{equation}
\label{S-prime-2-2}
|S_{D}^\star(v)\cap S_D^{\ast}(w)|\ge d^D-\left |S_{D-1}(v)\cup S_{D-1}(w)\right |-\frac{d^{D-1}-1}{d-1}.
\end{equation}

At this point we distinguish two cases: $d\ge 3$ and $d=2$.

First assume $d\ge 3$. Since $|S_{D-1}(v)|=|S_{D-1}(w)|=d^{D-1}$, we have the bound $\left |S_{D-1}(v)\cup S_{D-1}(w)\right |\le 2 d^{D-1}$. 
Hence it follows from (\ref{S-prime-2-2}) that
\begin{equation*}
|S_{D}^\star(v)\cap S_D^{\ast}(w)|\ge d^D-2 d^{D-1}-\frac{d^{D-1}-1}{d-1}=\frac{(d-3) d^D+d^{D-1}+1}{d-1}>0,
\end{equation*}
because $d\ge 3$. Therefore we have proved that if $G=B(d,D)$ and $d\ge 3$, then $S_{D}^\star(v)\cap S_D^{\ast}(w)\ne\emptyset$. 

Finally, assume $d=2$. In this case we must demonstrate that if $v_D=w_D$, then $S_{D}^\star(v) \cap S_{D}^{\star}(w)\ne\emptyset$; while if $v_D\ne w_D$, then $S_{D}^\star(v)\cap S_D^{\ast}(w)=\emptyset$.

If $v_D=w_D$, then, by statement (1) of Lemma~\ref{tech-2}, we have $S_{D-1}(v)=S_{D-1}(w)$. So if $v_D=w_D$,  from (\ref{S-prime-2-2}) and by taking into account that $d=2$, $|V|=2^D$, and $|S_{D-1}(v)|=2^{D-1}$, we get 
$$|S_{D}^\star(v)\cap S_D^{\ast}(w)|\ge 2^D-2^{D-1}-\left(2^{D-1}-1\right)=1,$$
which demonstrates that $S_{D}^\star(v)\cap S_D^{\ast}(w)\ne\emptyset$. 

To end suppose $v_D\ne w_D$. Since we are assuming $d=2$, we conclude that $v_D$ and $w_D$ are the two different symbols of the base alphabet $A$ for the sequence representation of the vertices. Moreover, by using this sequence representation, it is easy to check that in this case we have $S_{D-1}(v)\cup S_{D-1}(w)=V$. Therefore, if $v_D\ne w_D$  we conclude from (\ref{d2-empty}) that $S_{D}^\star(v)\cap S_D^{\ast}(w)=\emptyset$.  This completes the proof.

\subsection{Proof of statement (2) of Propositions~\ref{v2-second} and \ref{v2-second-cas 2}}  
\label{sec:3.5}
Let $j\ge i\ne D$. We have to prove that if $d\ge 3$, then $S_{i}^\star(v) \cap S_{j}^{\star}(w)\ne\emptyset$ if and only if $S_i(v)\cap S_j(w)\neq \emptyset$ and  $S_i(v)\not\subseteq S_{k,j}(w)$ for  $i \le  k < j$; whereas if $d=2$, then  $S_{i}^\star(v) \cap S_{j}^{\star}(w)\ne\emptyset$ if and only if $S_i(v)\cap S_j(w)\neq \emptyset$,  $S_i(v)\not\subseteq S_{k,j}(w)$ for  $i \le  k < j$, and one of the following conditions holds:
\begin{enumerate}
\item $j<D$;
\item $j=D$, and $v_{[i,D-1]}\ne v_{[i+1,D]}$ or $S_{i-1,j}(w)=\emptyset$.
\end{enumerate} 

Firstly we claim that, for any $d\ge 2$, if $j\ge i$ and  $S_{i}^\star(v)\cap S_j^{\ast}(w)\neq \emptyset$, then $S_i(v)\cap S_j(w)\neq \emptyset$ and $S_i(v)\not\subseteq S_{k,j}(w)$ for $i \leq  k <j$. Clearly, if $S_{i}^\star(v)\cap S_j^{\ast}(w)\neq \emptyset$, then  $S_i(v)\cap S_j(w)\neq \emptyset$. If $j=i$ we are done, because in this case the condition $S_i(v)\not\subseteq S_{k,j}(w)$ for $i \leq  k< j$ is empty. So, let us assume $j>i$. Since $S_i(v)\cap S_j(w)\neq \emptyset$, by statement (4) of Lemma~\ref{tech-7}, the intersection set $S_{i}^\star(v) \cap S_j^{\ast}(w)$ can be written as 
$$
S_{i}^\star( v) \cap S_j^{\ast}(w)=S_{i}(v) \setminus
\bigg(\Big(\bigcup_{k=0}^{i-1} S_{k,i}(v)\Big) 
\cup \Big(\bigcup_{k=i-1}^{j-1} S_{k,j}(w) \Big)\bigg).
$$
So if $S_i(v) \subseteq S_{k,j}(w)$ for some $k$, $i \leq  k < j$, then we conclude from the above expression that $S_{i}^\star(v) \cap S_j^{ \ast}(w)=\emptyset$. This finishes the proof of our claim.

Now we are going to demonstrate that if $d=2$, $j=D$, and  $S_{i}^\star(v)\cap S_j^{\ast}(w)\neq \emptyset$, then $v_{[i,D-1]}\ne v_{[i+1,D]}$ or $S_{i-1,j}(w)=\emptyset$. 

We claim that if $d=2$, $j=D$, $v_{[i,D-1]}=v_{[i+1,D]}$, and $S_{i-1,j}(w)\ne \emptyset$, then $S_{i-1}(v) \cup S_{i-1}(w)=S_i(v)$. Indeed, on one hand, since $v_{[i,D-1]}=v_{[i+1,D]}$, from statement (1) of  Lemma~\ref{tech-2} we conclude that $S_{i-1}(v)\subseteq S_i(v)$. So, by Definition~\ref{def_S_kj}, we have  $ S_{i-1,i}(v)\ne\emptyset$. On the other hand, since  $S_{i-1,D}(w)\ne\emptyset$, then $S_{i-1,D}(w)=S_{i-1}(w)$, and hence $S_{i-1,D}(w)\cap S_i(v)=S_{i-1}(w)\cap S_i(v)=S_{i-1}(w)$, because $w\in S_1(v)$. In particular, $S_{i-1,D}(w)\cap S_i(v)\ne\emptyset$. Therefore, if $i\ne D=j$, $S_{i-1,j}(w)\ne \emptyset$, and $v_{[i,D-1]}=v_{[i+1,D]}$, then we can apply statement (2.d) of Lemma~\ref{tech-5} to conclude that $w_D\ne v_D$. 

To finish the proof of our claim, let us use the sequence representation of the vertices to check that if $d=2$, $i<D$, $v_{[i,D-1]}=v_{[i+1,D]}$, and $w_D\ne v_D$, then $S_{i-1}(v) \cup S_{i-1}(w)=S_i(v)$. 

In fact, if $i<D$ and $v_{[i,D-1]}=v_{[i+1,D]}$, then $v_i=v_{i+1}=\cdots=v_D=\alpha$ for some symbol $\alpha$ of the base alphabet $A$. In particular we have $G=B(d,D)$. Hence a vertex $z$ belongs to $S_{i-1}(v)$ if and only if $z_{[1,D-i+1]}=v_{[i,D]}=\alpha\cdots\alpha$. Analogously, since $w=v_2\cdots v_D w_D$ (because $w\in S_1(v)$), a vertex $z'$ is in $S_{i-1}(w)$ if and only if $z'_{[1,D-i+1]}=w_{[i,D]}=\alpha\cdots\alpha w_D$. Moreover, we have $z''\in S_{i}(v)$ if and only if $z''_{[1,D-i]}=w_{[i+1,D]}=\alpha\cdots\alpha$. If we assume $d=2$ and $v_D\ne w_D$, then $v_D=\alpha$ and $w_D$ are the two symbols of the base alphabet $A$, that is, we have $A=\{\alpha,w_D\}$. Now it is clear that a vertex $z$ belongs to $S_{i-1}(v)\cup S_{i-1}(w)$ if and only if $z_{[1,D-i]}=\alpha\cdots\alpha$; if and only if $z\in S_i(v)$. Hence we have $S_{i-1}(v) \cup S_{i-1}(w)=S_i(v)$, as we wanted to check. 

This completes the proof of our claim.

From our claim and by statement (5) of Lemma~ \ref{tech-7bis}, we conclude that if $d=2$, $j=D$, $S_{i}^\star(v)\cap S_j^{\ast}(w)\neq \emptyset$, $S_{i-1,D}(w)\ne \emptyset$, and $v_{[i,D-1]}=v_{[i+1,D]}$, then 
\begin{equation*}
S_{i-1}(v) \cup S_{i-1}(w)=S_i(v)\textrm{ and } S_{i}^\star(v)\cap S_D^{\ast}(w)= S_{i}(v) \setminus\bigg(\Big(\bigcup_{k=0}^{i-2} S_{k,i}(v)\Big)\cup S'  \bigg),
\end{equation*}
where $S'=S_{i-1}(v) \cup S_{i-1}(w)$ and $S_{i-1}(v) \cap S_{i-1}(w)=\emptyset$. Since $S_{i-1}(v) \cup S_{i-1}(w)=S_i(v)$, we get $S_{i}^\star(v)\cap S_D^{\ast}(w)=\emptyset$, which is a contradiction.

At this point we have proved the direct implication of statement (2) of Propositions~\ref{v2-second} and \ref{v2-second-cas 2}. 

To complete the proof we are going to show that if $S_i(v)\cap S_j(w)\neq \emptyset$,   $S_i(v)\not\subseteq S_{k,j}(w)$ for  $i \le  k < j$, and if one of the following conditions hold:
\begin{enumerate}
	\item[(i)] $d\ge 3$;
	\item[(ii)] $j<D$;
	\item[(iii)] $j=D$, and $v_{[i,D-1]}\ne v_{[i+1,D]}$ or $S_{i-1,j}(w)=\emptyset$,
\end{enumerate}
then $S_{i}^\star(v)\cap S_j^{\ast}(w)\neq \emptyset$. 

Let us  assume  $S_i(v)\cap S_j(w)\neq \emptyset$,   $S_i(v)\not\subseteq S_{k,j}(w)$ for  $i \le  k < j$, and that either condition (i), or (ii), or (iii) is fulfilled.

Since $S_i(v)\cap S_j(w)\neq \emptyset$ and  $S_i(v)\not\subseteq S_{k,j}(w)$ for  $i \le  k < j$, by Lemma~\ref{tech-7bis} we deduce that the intersection set $S_{i}^\star(v)\cap S_j^{\ast}(w)$ can be described as
\begin{equation}
\label{S-prime}
S_{i}^\star(v)\cap S_j^{\ast}(w)=
S_{i}(v) \setminus\bigg(\Big(\bigcup_{k=0}^{i-2} S_{k,i}(v)\Big)\cup S'  \bigg),
\end{equation}
where $S'\subseteq S_{i-1}(v) \cup S_{i-1}(w)$. 

First assume that we are under condition (i); that is, $d\ge 3$. Since either $S_{k,i}(v)=\emptyset$ or $|S_{k,i}(v)|=|S_k(v)|=d^k$, and $|S_{i-1}(v)|=|S_{i-1}(v)|=d^{i-1}$, we have
\begin{align}
\label{S-prime-bis}
\left | \bigcup_{k=0}^{i-2} S_{k,i}(v) \cup S' \right | 
& \le \sum_{k=0}^{i-2}\left| S_{k,i}(v)\right |+ \left |S_{i-1}(v)\right|+\left| S_{i-1}(w)\right |\le \frac{d^{i-1}-1}{d-1}+ 2d^{i-1}=\frac{2d^i-d^{i-1}-1}{d-1},
\end{align}
and hence 
\begin{equation}
\label{S-prime-2}
|S_{i}^\star(v)\cap S_j^{\ast}(w)|\ge d^i-\frac{2d^i-d^{i-1}-1}{d-1}=\frac{(d-3) d^i+d^{i-1}+1}{d-1}>0,
\end{equation}
because $d\ge 3$. Therefore, if condition (i) holds, then $S_{i}^\star(v)\cap S_j^{\ast}(w)\ne\emptyset$, as we wanted to prove.

Now we must demonstrate that $S_{i}^\star(v)\cap S_j^{\ast}(w)\ne\emptyset$ if either condition (ii) or (iii) is satisfied. 

First observe that if either condition (ii) or (iii) is satisfied, then, by statements (1), (2), (3), or (4) of 
Lemma ~\ref{tech-7bis}, the set $S'$ in (\ref{S-prime}) is either $S'=\emptyset$, or $S'=S_{i-1}(v)$, or $S'=S_{i-1}(w)$. Therefore, in any case we have $|S'|\le d^{i-1}$, and so we have the bound
\begin{equation}
\label{S-prime-4}
 \left | \bigcup_{k=0}^{i-2} S_{k,i}(v) \cup S' \right | 
 \le \sum_{k=0}^{i-2}\left| S_{k,i}(v)\right |+ \left |S'\right| 
 \le \frac{d^{i-1}-1}{d-1}+ d^{i-1}=\frac{d^i-1}{d-1},
\end{equation}
and hence 
\begin{equation}
\label{S-prime-5}
|S_{i}^\star(v)\cap S_j^{\ast}(w)|\ge d^i-\frac{d^i-1}{d-1}=\frac{(d-2) d^i+1}{d-1}.
\end{equation}
So $|S_{i}^\star(v)\cap S_j^{\ast}(w)|>0$ for any $d\ge 2$. Therefore we have $S_{i}^\star(v)\cap S_j^{\ast}(w)\ne\emptyset$.

\subsection{Proof of statement (3) of Propositions~\ref{v2-second} and \ref{v2-second-cas 2}}  
Here we assume $d\ge 2$. 
We must prove that: 
\begin{enumerate}
	\item[(a)] the intersection  $S_{i}^\star(v) \cap S_{i-1}^{\star}(w)$ is empty if and only if $G=B(d,D)$ and $v_i=v_{i+1}=\cdots=v_D=w_D$.
	\item[(b)] if $S_{i}^\star(v)\cap S_{i-1}^{\ast}(w)=\emptyset$, then $S_{i}^\star(v)\cap S_{i}^{\ast}(w)\ne\emptyset$.
\end{enumerate}

Let us prove statement (a).

First of all observe that $S_i(v)\cap S_{i-1}(w)\neq \emptyset$, because  $w\in S_1(v)$. So we can apply statement (1) of Lemma~\ref{tech-7} to write  $S_{i}^\star(v)\cap S_{i-1}^{\ast}(w)$ as
\begin{align}
\nonumber S_{i}^\star(v)\cap S_{i-1}^{\ast}(w) &= S_{i-1}(w) \setminus\bigcup_{k=0}^{i-1}  S_{k,i}(v)\\
\nonumber &= S_{i-1}(w) \setminus\Big(\bigcup_{k=0}^{i-2}  S_{k,i}(v) \cup  \Big(S_{i-1,i}(v)\cap S_{i-1}(w)\Big)\Big)\\
\label{empty}
&=S_{i-1}(w) \setminus\Big(\bigcup_{k=0}^{i-2}  S_{k,i}(v) \cup  S'\Big),
\end{align}
where  $S'=S_{i-1,i}(v)\cap S_{i-1}(w)$. Next we are going to prove that either $S'=\emptyset$ or $S'=S_{i-1}(v)=S_{i-1}(w)$. To this end, we only must prove that if $S_{i-1,i}(v)\cap S_{i-1}(w)\ne\emptyset$, then $S_{i-1,i}(v)\cap S_{i-1}(w)=S_{i-1}(v)=S_{i-1}(w)$. So assume $S_{i-1,i}(v)\cap S_{i-1}(w)\ne\emptyset$. In particular we have $S_{i-1,i}(v)\ne\emptyset$ and hence, from Definition~\ref{def_S_kj}, we get that $S_{i-1,i}(v)=S_{i-1}(v)$. So our assumption implies that $S_{i-1}(v)\cap S_{i-1}(w)\ne\emptyset$. Now, by applying statement (1) of Lemma~\ref{tech-2}, we have  $S_{i-1}(v)\subseteq S_{i-1}(w)$ and $S_{i-1}(w)\subseteq S_{i-1}(v)$. Hence $S_{i-1,i}(v)\cap S_{i-1}(w)=S_{i-1}(v)\cap S_{i-1}(w)=S_{i-1}(v)=S_{i-1}(w)$.

By Definition~\ref{def_S_kj} and Lemma~\ref{tech-1} we know that $|S_{i-1}(w)|=d^{i-1}$ and that, either $S_{k,i}(v)=\emptyset$ or $|S_{k,i}(v)|=|S_k(v)|=d^k$. Therefore, from (\ref{empty}) we conclude that\begin{equation*}
|S_{i}^\star(v)\cap S_{i-1}^{\ast}(w)| = |S_{i-1}(w)|- \sum_{k=0}^{i-2} \left | S_{k,i}(v)\right |-|S'|\ge d^{i-1}-\sum_{k=0}^{i-2} d^k -|S'|.
\end{equation*}
Thus, since 
$$d^{i-1}-\sum_{k=0}^{i-2}d^k=d^{i-1}-\frac{d^{i-1}-1}{d-1}=\frac{(d-2)d^{i-1}+1}{d-1}>0,$$
we have $|S_{i}^\star(v)\cap S_{i-1}^{\ast}(w)|=0$ if and only if $|S'|\ne 0$. Therefore $S_{i}^\star(v)\cap S_{i-1}^{\ast}(w)=\emptyset$ if and only if $S_{i-1,i}(v)\cap S_{i-1}(w)=S_{i-1}(v)=S_{i-1}(w)$. 

By statement (1) of Lemma~\ref{tech-2}, we have $S_{i-1}(v)= S_{i-1}(w)$ if and only if $v_{[i,D]}=w_{[i,D]}$. Thus, since $w\in S_1(v)$, we conclude that $S_{i-1}(v)= S_{i-1}(w)$ if and only if $v_i=v_{i+1}=\cdots=v_D=w_D$; if and only if $G=B(d,D)$ and $v_i=v_{i+1}=\cdots=v_D=w_D$.

Therefore the proof of (a) will be completed by showing that if $S_{i-1}(v)=S_{i-1}(w)$, then  $S_{i-1,i}(v)\cap S_{i-1}(w)=S_{i-1}(v)=S_{i-1}(w)$. Let us prove this. Assume $S_{i-1}(v)=S_{i-1}(w)$. Then $S_{i-1}(v)=S_{i-1}(w)\subseteq S_i(v)$, because $w\in S_1(v)$. Therefore, by  Definition~\ref{def_S_kj}, we have $S_{i-1,i}(v)=S_{i-1}(v)$. So $S_{i-1,i}(v)\cap S_{i-1}(w)=S_{i-1}(v)\cap S_{i-1}(w)=S_{i-1}(v)=S_{i-1}(w)$, as we wanted to prove.

Now let us demonstrate  (b); that is, we have to prove that if $S_{i}^\star(v) \cap S_{i-1}^{\star}(w)=\emptyset$, then $S_{i}^\star(v) \cap S_{i}^{\star}(w)\ne\emptyset$. 

So let us assume $S_{i}^\star(v) \cap S_{i-1}^{\star}(w)=\emptyset$ and thus, by (a), we have $G=B(d,D)$ and $v_i=v_{i+1}=\cdots=v_D=w_D$. 

If $i=D$ and $d\ge 3$ there is nothing to prove, because in this case we have $S_D^\star(v)\cap S_D^\star(w)\ne\emptyset$ by statement (1) of Propositions~\ref{v2-second}; whereas if $i=D$ and $d=2$, then, since $v_D=w_D$, we also have $S_D^\star(v)\cap S_D^\star(w)\ne\emptyset$ by statement (1) of Propositions~\ref{v2-second-cas 2}.

Hence assume $i<D$. Since $v_i=v_{i+1}=\cdots=v_D=w_D$, we have $v_{[i+1,D-1]}=v_{[i+2,D]}$ and $v_D=w_D$, which implies $v_{[i+1,D]}=w_{[i+1,D]}$, because $w\in S_1(v)$. Thus we conclude from Lemma~\ref{tech-2} that $S_i(v)=S_i(w)$ and so $S_i(v)\cap S_i(w)\ne\emptyset$. Therefore if $d\ge 3$, then it follows from statement (2) of Propositions~\ref{v2-second} that $S_i^\star(v)\cap S_i^\star(w)\ne\emptyset$. Whereas if $d=2$, since $i<D$, we also have $S_i^\star(v)\cap S_i^\star(w)\ne\emptyset$, because of condition (a) of statement (2) of Proposition~\ref{v2-second-cas 2}. This concludes the proof of (b).

\subsection{Proof of statement (4) of Propositions~\ref{v2-second} and \ref{v2-second-cas 2}}
We have to prove the following two statements:
\begin{itemize}
\item[(a)] If $d\ge 3$, then there exists a unique integer $j$, $i\le j\le D$, such that the intersection $S_{i}^\star(v) \cap S_{j}^{\star}(w)$ is nonempty. 
\item[(b)] If $d=2$, then  the intersection $S_{i}^\star(v) \cap S_{j}^{\star}(w)$  is empty for all integer $j$, $i\le j\le D$, if and only if $G=B(d,D)$  and $v_i=v_{i+1}=\cdots=v_D\ne w_D$. 
\end{itemize}

Before proving (a) and (b) let us demonstrate the following claim: for any $v\in V$ and $w\in S_1(v)$, either $S_i(v)\cap S_{D-1}(w)\ne \emptyset$ or  $S_i(v)\cap S_{D}(w)\ne \emptyset$. 

Indeed, the claim clearly holds whenever $G=B(d,D)$, because in this case we have $S_D(w)=V$. If $G$ is the Kautz digraph $K(d,D)$, we conclude from statements (1) and (2) of Lemma~\ref{tech-2} that if $v_{i+1}=w_D$, then $S_i(v)\cap S_{D-1}(w)\ne\emptyset$; while if $v_{i+1}\ne w_D$, then  $S_i(v)\cap S_{D}(w)\ne\emptyset$. This finishes the proof of our claim.

The above claim guarantees that the set of integers $\{\ell:\ i\le\ell\le D \textrm{ and } S_i(v)\cap S_{\ell}(w)\neq \emptyset\}$ is nonempty. Set $\ell_0=\min\{\ell:\ i\le\ell\le D \textrm{ and } S_i(v)\cap S_{\ell}(w)\neq \emptyset\}$. So $\ell_0$ is an integer such that  $i\le \ell_0\le  D$, $S_i(v)\cap S_{\ell_0}(w)\neq \emptyset$, and $S_i(v)\cap S_{k}(w)=\emptyset$ for  $i \le  k < \ell_0$. In particular, for $i \le  k < \ell_0$, we have $S_i(v)\not\subseteq S_{k}(w)$, and hence $S_i(v)\not\subseteq S_{k,\ell_0}(w)$ for  $i \le  k < \ell_0$.

Let us prove (a). 

So we assume $d\ge 3$ and we have to demonstrate that there exists a unique integer $j$, $i\le j\le D$, such that the intersection $S_{i}^\star(v) \cap S_{j}^{\star}(w)$ is nonempty. By Proposition~\ref{v2-first}, it is enough to prove that there exists an integer $j_0$, $i\le j_0\le D$, such that the $S_{i}^\star(v) \cap S_{j_0}^{\star}(w)\ne\emptyset$. 

If $i=D$ the result holds by taking $j_0=D$, because in this case, by statement (1) of Proposition~\ref{v2-second}, we have $S_{D}^\star(v) \cap S_{D}^{\star}(w)\ne\emptyset$. Whereas if $i<D$ the result holds by taking  $j_0=\ell_0$. Indeed, since $S_i(v)\not\subseteq S_{k,\ell_0}(w)$ for  $i \le  k < \ell_0$, by applying statement (2) of Proposition~\ref{v2-second},  we have $S_{i}^\star(v) \cap S_{\ell_0}^{\star}(w)\ne\emptyset$. 

This concludes the proof of (a).

Now let us prove (b). 

Assume $d=2$. First, let us prove that if for all integer $j$, $i\le j\le D$,  the intersection $S_{i}^\star(v) \cap S_{j}^{\star}(w)$  is empty, then $G=B(d,D)$ and $v_i=v_{i+1}=\cdots=v_D\ne w_D$. 

Observe that if $i=D$, then the above implication is a direct consequence of the statement (1) of Proposition~\ref{v2-second-cas 2}. Thus we only must prove the implication in the case $i<D$. 

Hence, assume $i<D$. Let us consider the integer $\ell_0$ defined above. By assumption, $S_{i}^\star(v) \cap S_{\ell_0}^{\star}(w)=\emptyset$. Therefore, by statement (2) of Proposition~\ref{v2-second-cas 2}, we conclude that $\ell_0=D$, $v_{[i,D-1]}= v_{[i+1,D]}$, and $S_{i-1,\ell_0}(w)\ne\emptyset$. Since $i<D$ we have $v_{[i,D-1]}= v_{[i+1,D]}$ if and only if $G=B(d,D)$ and $v_i=v_{i+1}=\cdots=v_D$. To conclude it only remains to show that $v_D\ne w_D$. 

Since $\ell_0=D$ and $S_{i-1,\ell_0}(w)\ne\emptyset$, by Remark~\ref{rem_S_kj-bis} we have $S_{i-1}(w)\cap S_k(w)=\emptyset$ for all $i-1 <k< D$. By statement (1) of Lemma~\ref{tech-2}, we have $S_{i-1}(w)\cap S_k(w)=\emptyset$ for all $i-1 <k< D$ if and only if $w_{[i,D-(k-i)-1)]}\ne w_{[k+1,D]}$ for all $i-1 <k< D$; if and only if $v_{[i+1,D-(k-i)-1)]}\ne v_{[k+2,D]}$ for $v_{D-(k-i)}\ne w_D$ for all $i-1 <k< D$. Since $v_i=v_{i+1}=\cdots=v_D$ we have $v_{[i+1,D-(k-i)-1)]}=v_{[k+2,D]}$ for all $i-1 <k< D-1$. Therefore we have $S_{i-1}(w)\cap S_k(w)=\emptyset$ for all $i-1 <k< D$ if and only if $v_{D-(k-i)}\ne w_D$ for all $i-1 <k< D$; if and only if $v_D\ne w_D$. 

Reciprocally, let us demonstrate that if $G=B(d,D)$ and $v_i=v_{i+1}=\cdots=v_D\ne w_D$, then the intersection $S_{i}^\star(v) \cap S_{j}^{\star}(w)$ is empty for all integer $j$, $i\le j\le D$. If $v_i=v_{i+1}=\cdots=v_D\ne w_D$, then $v_{[i+1,D-(j-i)]}\ne w_{[j+1,D]}$ for all $j$, $i\le j<D$. Therefore, by statement(1) of Lemma~\ref{tech-2}, we have $S_i(v)\cap S_j(v)=\emptyset$ for all $j$, $i\le j<D$, and hence $S_{i}^\star(v) \cap S_{j}^{\star}(w)=\emptyset$ for all $j$, $i\le j<D$. It remains to be proved that we also have $S_{i}^\star(v) \cap S_{D}^{\star}(w)=\emptyset$. Indeed, since $S_{i}^\star(v)=S_i(v)\setminus \left (\bigcup_{k=0}^{i-1}S_{k}(v) \right )$ and $S_{D}^{\star}(w)=V\setminus \left (\bigcup_{k=0}^{D-1}S_{k}(w) \right )$, we conclude that
$$
S_{i}^\star(v) \cap S_{D}^{\star}(w)=S_i(v)\setminus \bigcup_{k=0}^{i-1} \left( S_{k}(v)\cup  S_{k}(w)\right )=\emptyset,
$$
because, since $d=2$ and $v_i=v_{i+1}=\cdots=v_D\ne w_D$, we have $S_{i-1}(v)\cup  S_{i-1}(w)=S_{i}(v)$. (This last equality can be easily checked, as in Subsection~\ref{sec:3.5}, by using the sequence representation of the vertices.)

\subsection{Proof of Theorem~\ref{propo 2.9-v2-cas 1}}
Let $v\in V$, $w\in S_1(v)$, and let $1\le i\le D$. Assume that $S_{i}^\star(v) \cap S_{i-1}^{\ast}(w)\ne\emptyset$ and that $S_{i}^\star(v) \cap S_{j_0}^{\ast}(w)\ne\emptyset$ for some $i\le j_0\le D$. Then, from Proposition~\ref{v2-first} we conclude that
$S_i^\star(v)=
\left(S_{i}^\star(v) \cap S_{i-1}^{\star}(w)\right)\cup \left(S_{i}^\star(v) \cap S_{j_0}^{\star}(w)\right),
$
and so, since  $\left(S_{i}^\star(v) \cap S_{i-1}^{\star}(w)\right)\cap \left(S_{i}^\star(v) \cap S_{j_0}^{\star}(w)\right)=\emptyset$, we get
$
\big |S_i^\star(v)\big |=
\big |S_{i}^\star(v) \cap S_{i-1}^{\star}(w)\big |+ \big |S_{i}^\star(v) \cap S_{j_0}^{\star}(w)\big |.
$
To complete the proof of the theorem we must demonstrate that if $\left |S_{i}^\star(v) \right |=d^i-a_{i-1}d^{i-1}-\ldots -a_{1} d-a_{0}$, then $\left |S_{i}^\star(v) \cap S_{i-1}^{\ast}(w) \right |= d^{i-1}-b_{i-2}d^{i-2}-\ldots -b_{1} d-c_{0},$ where $b_k=1$  if and only if $a_k=1$ and $v_{D-i+k+1}=w_D$.  Let us demonstrate this. 

First of all observe that $S_i(v)\cap S_{i-1}(w)\neq \emptyset$, because  $w\in S_1(v)$. So we can apply statement (1) of Lemma~\ref{tech-7} to write  $S_{i}^\star(v)\cap S_{i-1}^{\ast}(w)$ as
\begin{align}
\label{empty-p}
S_{i}^\star(v)\cap S_{i-1}^{\ast}(w) &= S_{i-1}(w) \setminus\bigcup_{k=0}^{i-1}  S_{k,i}(v)= S_{i-1}(w) \setminus\Big(\bigcup_{k=0}^{i-2}  S_{k,i}(v) \cup  \Big(S_{i-1,i}(v)\cap S_{i-1}(w)\Big)\Big).
\end{align}
Let us see that, in the above expression, the intersection $S_{i-1,i}(v)\cap S_{i-1}(w)$ is empty. Since $S_{i}^\star(v) \cap S_{i-1}^{\ast}(w)\ne\emptyset$, we know by statement (3) of Propositions~\ref{v2-second} and \ref{v2-second-cas 2} that either $G=K(d,D)$ or if $G=B(d,D)$, then $v_i=v_{i+1}=\cdots=v_D=w_D$ does not hold. If $G=K(d,D)$, then $S_{i-1,i}=\emptyset$ (see Remark~\ref{rem_def_S_kj}) an so $S_{i-1,i}(v)\cap S_{i-1}(w)=\emptyset$, as we wanted to show. So assume $G=B(d,D)$ and that condition $v_i=v_{i+1}=\cdots=v_D=w_D$ does not hold. Moreover, we can suppose $\emptyset\ne S_{i-1,i}(v)=S_{i-1}(v)$, because otherwise our claim clearly holds. Thus, by Definition~\ref{def_S_kj}, we have $S_{i-1}(v)\subseteq S_i(v)$ and hence, if $i<D$, then $v_{[i,D-1]}=v_{[i+1,D]}$ by statement (1) of Lemma~\ref{tech-2}. Hence in any case we must have $v_D\ne w_D$ and so $v_{[i,D-1]}\ne w_{[i,D]}$. Again by statement (1) of Lemma~\ref{tech-2} we get $S_{i-1}(v)\cap S_{i-1}(w)=\emptyset$, as we wanted to prove. 

Therefore, since $S_{i-1}(v)\cap S_{i-1}(w)=\emptyset$, we get from (\ref{empty-p}) that
$$
S_{i}^\star(v)\cap S_{i-1}^{\ast}(w) =S_{i-1}(w)  \setminus
\bigg(
\bigcup_{k=0}^{i-2} \big(S_{k,i}(v)\cap S_{i-1}(w)\big)
\bigg),
$$
where, by statement (1) of Lemma~\ref{tech-2}, for $0\le k\le i-2$, we have either $S_{k,i}(v)\cap S_{i-1}(w)=\emptyset$ or $S_{k,i}(v)\subseteq S_{i-1}(w)$. Thus, recalling that if $k_1\ne k_2$, then $S_{k_1,i}(v) \cap S_{k_2,i}(v)=\emptyset$ (see Remark~\ref{void}), we have
\begin{align*}
|S_{i}^\star(v)\cap S_{i-1}^{\ast}(w)| &=
|S_{i-1}(v)|-\sum _{k=0}^{i-2} |S_{k,i}(v)\cap S_{i-1}(w)|=d^{i-1}-b_{i-2}d^{i-2}-\ldots -b_{1} d-b_{0},
\end{align*}
where the coefficients $b_{k}$ are $0$ or $1$. More precisely, for $0\le k\le i-2$, we have $b_k=1$ if and only if $S_{k,i}(v)\cap S_{i-1}(w)\ne\emptyset$. That is, $b_k=1$ if and only if $S_{k,i}(v)\ne\emptyset$ and $S_{k,i}(v)\cap S_{i-1}(w)\ne\emptyset$. Now, by applying Proposition~\ref{propo 2.3} and statement (1) of Lemma~\ref{tech-2} we have $b_k=1$ if and only if $a_k=1$  and $S_{k}(v)\subseteq S_{i-1}(w)$; if and only if  $a_k=1$ and $v_{[k+1,D-(i-k)+1]}=w_{[i,D]}$. To finish the proof  let us demonstrate that we have $a_k=1$  and $v_{[k+1,D-(i-k)+1]}=w_{[i,D]}$ if and only if $a_k=1$ and $v_{D-i+k+1}=w_D$. Clearly, we only must show that if $a_k=1$ and $v_{D-i+k+1}=w_D$, then $v_{[k+1,D-(i-k)+1]}=w_{[i,D]}$. If $i=D$, there is nothing to prove. So let us prove the implication in the case $i<D$. Hence assume $i<D$, $a_k=1$, and $v_{D-i+k+1}=w_D$. By Proposition~\ref{propo 2.3} and the definition of $S_{k,i}(v)$, if  $a_k=1$, then $S_{k,i}(v)=S_k(v)\subseteq S_i(v)$. Hence, again by statement (1) of Lemma~\ref{tech-2} and since $w\in S_1(v)$, we have $v_{[k+1,D-(i-k)]}=v_{[i+1,D]}=w_{[i,D-1]}$. Therefore the equality $v_{[k+1,D-(i-k)+1]}=w_{[i,D]}$ holds, because we are assuming $v_{D-i+k+1}=w_D$. This finishes the proof of the theorem.

\subsection{Proof of Theorem~\ref{propo 2.9-v2-cas 2}}
Let $v\in V$ and $w\in S_1(v)$. Let $1\le i\le D$ and assume that $S_{i}^\star(v) \cap S_{i-1}^{\ast}(w)=\emptyset$. 

If $S_{i}^\star(v) \cap S_{i-1}^{\ast}(w)=\emptyset$, then, by statement (3) of Propositions~\ref{v2-second} and \ref{v2-second-cas 2}, we have $S_{i}^\star(v)\cap S_{i}^{\ast}(w)\ne\emptyset$. Hence the unique integer $j_0$ given in Proposition~\ref{v2-first} is $j_0=i$. Therefore, by statement (1) of this Proposition~\ref{v2-first}, we have $S_{i}^\star(v)=S_{i}^\star(v)\cap S_{i}^{\ast}(w)$, and so, if $\left |S_{i}^\star(v) \right |=d^i-a_{i-1}d^{i-1}-\ldots -a_{1} d-a_{0}$ is the polynomial expression of  $\left |S_{i}^\star(v) \right |$, then 
$$
\big |S_{i}^\star(v) \cap S_{i}^{\ast}(w) \big |=\big |S_{i}^\star(v) \big |=d^i-a_{i-1}d^{i-1}-\ldots -a_{1} d-a_{0}.$$

\subsection{Proof of Theorem~\ref{propo 2.9-v2-cas 3}}
\label{proof propo 2.9-v2-cas 3}
Let $v\in V$ and $w\in S_1(v)$. Let $1\le i\le D$ and assume that $S_{i}^\star(v) \cap S_{j}^{\star}(w)=\emptyset$ for all $i\le j\le D$.

First of all notice that we must have $d=2$, because if $d\ge 3$, then, by statement (4) of Proposition~\ref{v2-second}, there exists a unique integer $j$, $i\le j\le D$, such that the intersection $S_{i}^\star(v) \cap S_{j}^{\star}(w)$  is non-empty, contradicting the assumption that $S_{i}^\star(v) \cap S_{j}^{\star}(w)=\emptyset$ for all $i\le j\le D$.

If $S_{i}^\star(v) \cap S_{j}^{\star}(w)=\emptyset$ for all $i\le j\le D$, then, by statement (2) of Proposition~\ref{v2-first}, we have $S_i^\star(v)= S_{i}^\star(v) \cap S_{i-1}^{\star}(w)$ and so, if $\left |S_{i}^\star(v) \right |=d^i-a_{i-1}d^{i-1}-\ldots -a_{1} d-a_{0}$ is the polynomial expression of  $\left |S_{i}^\star(v) \right |$, then 
\begin{equation}
\label{eq:0}
\big |S_{i}^\star(v) \cap S_{i-1}^{\ast}(w) \big |=\big |S_{i}^\star(v) \big |=d^i-a_{i-1}d^{i-1}-\ldots -a_{1} d-a_{0}.
\end{equation}

It remains to prove that, in this case, we have $a_{i-1}=1$. On one hand, by Proposition~\ref{propo 2.3} we know that $a_{i-1}=1$ if and only if $G=B(d,D)$ and $v_i=v_{i+1}=\cdots=v_D$. On the other hand, by statement (4) of Proposition~\ref{v2-second-cas 2}, if $S_{i}^\star(v) \cap S_{j}^{\star}(w)=\emptyset$ for all $i\le j\le D$, then  $G=B(d,D)$ and $v_i=v_{i+1}=\cdots=v_D\ne w_D$. Therefore $a_{i-1}=1$.

Finally notice that, since $d=2$ and $a_{i-1}=1$, we have $d^i-d^{i-1}=d^{i-1}$. Then the polynomial expression $d^i-d^{i-1}-a_{i-2}d^{i-2}\ldots -a_{1} d-a_{0}$ in (\ref{eq:0}) can be equivalently expressed as $d^{i-1}-a_{i-2}d^{i-2}\ldots -a_{1} d-a_{0}$.

\setcounter{subsection}{0}
\section{Proofs of our results on deflection routing}
\label{proofs-2}
In this section, we prove the results presented in Subsection~\ref{application}. To do this, we will use several additional technical lemmas on the layer structure gathered in Subsection~\ref{technical-lemmas-defl}.

\subsection{Technical lemmas}
\label{technical-lemmas-defl}

The following two lemmas deal with the set of vertices $w\in S_1(v)$ for which the intersection set $S_i(v)\cap S_j(w)$ is nonempty.

\begin{lemma}
\label{tech-3}
Let $v\in V$ and, for $0\leq i\leq j<D$, let $\Gamma_{i,j}^+(v)=\{w\in S_1(v): \, \, S_i(v)\cap S_j(w)\ne\emptyset\}$. Then
\begin{enumerate}
\item The set $\Gamma_{i,j}^+(v)$ is nonempty if and only if one the following conditions is fulfilled:
\begin{enumerate} 
\item $i<j<D-1$ and $v_{[i+1,D+i-j-1]}=v_{[j+2,D]}$.
\item $i<j=D-1$ and either $G=B(d,D)$, or $G=K(d,D)$ and $v_{i+1}\ne v_D$. 
\item $i=j$, $G=B(d,D)$ and $v_{[i+1,D]}=v_D\cdots v_D$.
\end{enumerate}
\item If $\Gamma_{i,j}^+(v)\ne\emptyset$, then $\Gamma_{i,j}^+(v)$ has a unique element $w$ which sequence representation is $w=v_2\cdots v_{D}v_{i+(D-j)}$. Moreover, if $i=j$, then $S_{i-1}(w)\subseteq S_i(v)=S_i(w)\subseteq S_{i+1}(v)=S_{i+1}(w)\subseteq\cdots$
\item Let $\Gamma_{i,j}^\star(v)=\{w\in S_1(v):\, S_i^\star(v)\cap S_j^\star(w)\ne\emptyset\}$. Then $\Gamma_{i,j}^\star(v)\ne\emptyset$ if and only if there exists a vertex $w$ such that  $\Gamma_{i,j}^\star(v)=\{w\}$; if and only if there exists a vertex $w$ such that $\Gamma_{i,j}^\star(v)=\Gamma_{i,j}^+(v)=\{w\}$. Moreover, $w\in\Gamma_{i,j}^\star(v)$ if and only if $w\in\Gamma_{i,j}^+(v)$ and $S_i\left(v\right)\not\subseteq S_{t,j}\left(w\right)$ for $i\le t < j$.
\end{enumerate}
\end{lemma}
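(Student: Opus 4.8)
\textbf{Proof plan for Lemma~\ref{tech-3}.}

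The plan is to treat the three statements in order, since each relies on the previous one. For statement (1), I would use the sequence-representation description of the sets $S_k(\cdot)$: a vertex $w\in S_1(v)$ has sequence representation $w=v_2\cdots v_D w_{D+1}$ for some admissible symbol $w_{D+1}$, and $S_i(v)\cap S_j(w)\ne\emptyset$ amounts to a coincidence condition between the trailing symbols of $v$ and the trailing symbols of $w$, exactly as spelled out in the ``Notice that'' paragraph of Subsection~\ref{distance-layer} and in Lemma~\ref{tech-2}. Concretely, $S_i(v)=\{u:\ u=v_{i+1}\cdots v_D\ast\cdots\ast\}$ and $S_j(w)=\{u:\ u=w_{j+1}\cdots w_{D}w_{D+1}\ast\cdots\ast\}=\{u:\ u=v_{j+2}\cdots v_D w_{D+1}\ast\cdots\ast\}$ when $j<D$. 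Matching the overlapping symbols (there are $D-j$ of them to match against $v_{i+1},\dots,v_{D-j+i}$, the last of which involves the free symbol $w_{D+1}=w_{i+(D-j)}$, wait---one must be careful with indexing here) gives precisely the conditions: when $i<j<D-1$ the matched block is $v_{[i+1,D+i-j-1]}=v_{[j+2,D]}$ together with one equation fixing $w_{D+1}$; when $j=D-1$ the only constraint is that $w_{D+1}$ can be chosen, which is automatic in $B(d,D)$ and requires $v_{i+1}\ne v_D$ in $K(d,D)$ (so that the needed symbol $\ne w_D=v_D$ exists); and when $i=j$ one applies statement (3) of Lemma~\ref{tech-3}'s predecessor, i.e. the case $i=j$ of Lemma~\ref{tech-2}, forcing $G=B(d,D)$ and $v_{[i+1,D]}=v_D\cdots v_D$.

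For statement (2): once the defining coincidence equations hold, the free symbol $w_{D+1}$ is \emph{determined} by the matching equation, and a direct substitution shows $w_{D+1}=v_{i+(D-j)}$, so $\Gamma^+_{i,j}(v)=\{v_2\cdots v_D v_{i+(D-j)}\}$; I would just verify that this symbol is admissible (different from $w_D=v_D$ in the Kautz case, which is guaranteed by condition (1b) or (1c)). The chain of inclusions in the case $i=j$ follows by iterating Lemma~\ref{tech-2}(1): the condition $v_{[i+1,D]}=v_D\cdots v_D$ propagates upward, giving $S_\ell(v)=S_\ell(w)$ for all $\ell\ge i$ and $S_{i-1}(w)\subseteq S_i(v)$ since $w\in S_1(v)$.

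For statement (3), which I expect to be the main obstacle: the inclusion $\Gamma^\star_{i,j}(v)\subseteq\Gamma^+_{i,j}(v)$ is immediate ($S_i^\star(v)\subseteq S_i(v)$ etc.), so $\Gamma^\star_{i,j}(v)$ is either empty or equal to the singleton $\{w\}$ from statement (2); the content is the criterion $w\in\Gamma^\star_{i,j}(v)\iff w\in\Gamma^+_{i,j}(v)\text{ and }S_i(v)\not\subseteq S_{t,j}(w)\text{ for }i\le t<j$. Here I would invoke Proposition~\ref{v2-second} (for $d\ge3$) and Proposition~\ref{v2-second-cas 2} (for $d=2$), statement (2) of each: since $j<D$ we are always in the ``$j<D$'' sub-case, so both propositions give $S_i^\star(v)\cap S_j^\star(w)\ne\emptyset\iff S_i(v)\cap S_j(w)\ne\emptyset$ and $S_i(v)\not\subseteq S_{t,j}(w)$ for $i\le t<j$, which is exactly $w\in\Gamma^+_{i,j}(v)$ together with the stated non-containment condition. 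The one delicate point is that Proposition~\ref{v2-second} requires $i\le D$, which holds, and that the case $i=j$ (where the non-containment condition is vacuous) is consistent: there the criterion reduces to $w\in\Gamma^+_{i,j}(v)$, and indeed when $i=j<D$ one checks via Lemma~\ref{tech-2}(1) and the definition of $S_{t,j}$ that $S_i(v)\not\subseteq S_{t,i}(v)$ is automatic for $t<i$ while there are no $t$ with $i\le t<j$ --- so nothing extra is needed. The main work is thus bookkeeping: confirming that the hypotheses of the two propositions are met and that their conclusions line up verbatim with the asserted equivalence, and handling the boundary case $i=j$ separately and carefully.
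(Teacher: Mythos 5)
Your plan follows essentially the same route as the paper: statements (1) and (2) via the sequence representation and the coincidence condition of Lemma~\ref{tech-2} (which pins down $w_D=v_{i+(D-j)}$), and statement (3) by reducing $\Gamma^\star_{i,j}(v)$ to the singleton from (2) and invoking statement (2) of Propositions~\ref{v2-second} and \ref{v2-second-cas 2}, noting that $i\le j<D$ keeps you in the sub-case where both propositions give the same clean criterion. Two trivial slips worth fixing when you write it up: the $i=j$ case of (1) rests on statement (1) of Lemma~\ref{tech-2} with $k=i=j<D$ (not its statement (3), which concerns $k=i=D$), and in case (1a) the admissibility of $w_D=v_{i+(D-j)}$ in the Kautz digraph follows from $v_{D+i-j}\ne v_{D+i-j-1}=v_D$ rather than from conditions (1b)/(1c).
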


\begin{remark}
In statement (1) we provide some necessary and sufficient conditions (stated in terms of the sequence representation of $v$) for $\Gamma_{i,j}^+\left(v\right)\ne\emptyset$, and  in statement (2) it is proved that if $\Gamma_{i,j}^+\left(v\right)\ne\emptyset$, then $\Gamma_{i,j}^+(v)=\{w\}$, where $w$ is  determined from $v$. However, the intersection $S_i^\star\left(v\right)\cap S_{j}^\star\left(w\right)$ can be empty in spite of $v$ being a vertex for which $\Gamma_{i,j}^+\left(v\right)\ne\emptyset$. 
In Example~\ref{def-Kautz} we have an example of this fact. There we considered the digraph $G=K(d,12)$, $v=\alpha\beta\gamma\alpha\beta\gamma\alpha\beta\gamma\alpha\beta\gamma$, $i=1$, and $j=6$. Observe that vertex $v$ satisfies condition (a) in statement (1) of Lemma~\ref{tech-3}, because $v_{[2,6]}=\beta\gamma\alpha\beta\gamma=v_{[8,12]}$. Therefore $\Gamma_{i,j}^+\left(v\right)=\{w=\beta\gamma\alpha\beta\gamma\alpha\beta\gamma\alpha\beta\gamma\alpha\}\ne\emptyset$, where, by statement (2) of  Lemma~\ref{tech-3}, the last symbol in the sequence representation of $w$ is $v_{i+(D-j)}=v_7=\alpha$. However, in this example we have $S_1\left(v\right)\subseteq S_{3,6}\left(w\right)=S_{3}\left(w\right)$ and so, by statement (3) of  Lemma~\ref{tech-3}, the intersection $S_1^\star(v)\cap S_6^\star(w)$ is empty. Indeed, as seen in Example~\ref{def-Kautz}, we have $S_1^\star(v)\cap S_6^\star(w)= (S_1(v)\cap S_6(w))\setminus S_3(w)=\emptyset$. 
\end{remark}

\begin{proof} 
Let $w\in S_1(v)$. Hence $w_{[1,D-1]}=v_{[2,D]}$. Assume first that $j< D-1$. By statement (1) of Lemma~\ref{tech-2}, $S_i(v)\cap S_j(w)\ne\emptyset$  if and only if  $v_{[i+1,D-(j-i)]}=w_{[j+1,D]}$; that is, $S_i(v)\cap S_j(w)\ne\emptyset$ if and only if $v_{[i+1,D-(j-i)-1]}=v_{[j+2,D]}$ and $v_{D-(j-i)}=w_D$. In particular, if $i=j$, then $v_{i+1}=v_{i+2}=\cdots=v_D=w_D$, and hence  $G=B(d,D)$.  

Now suppose $j=D-1$. By Lemma~\ref{tech-2}, $S_i(v)\cap S_{D-1}(w)\ne\emptyset$ if and only if $v_{i+1}=w_D$. Therefore, if $G=B(d,D)$ there is always a vertex $w\in\Gamma_{i,j}^+(v)$, while if $G=K(d,D)$, then there exists $w\in\Gamma_{i,j}^+(v)$ if and only if $v_{i+1}\ne v_D$. In particular, if $G=K(d,D)$ and $\Gamma_{i,j}^+(v)\ne\emptyset$, then $D\ne i+1$. 

Until now we have proved statement (1). Next, to prove statement (2) first let us  assume that $\Gamma_{i,j}^+(v)\ne\emptyset$. Observe from the above that if $w\in \Gamma_{i,j}^+(v)$, then $w_D$ is uniquely determined and it is equal to $v_{i+(D-j)}$ both for $j<D-1$ as for $j=D-1$. Hence $\Gamma_{i,j}^+(v)$ has a unique element $w$ which sequence representation is $w=v_2\cdots v_{D}v_{i+(D-j)}$. It is clear from the previous statements that if $\Gamma_{i,j}^+(v)\ne\emptyset$ and $i=j$, then $G$ cannot be $K(d,D)$. Since $w\in S_1(v)$ we have $S_l(w)\subseteq S_{l+1}(v)$ for all $l\ge 0$. Hence to finish the proof of statement (2) we only need to show that if $i=j$, then $S_k(v)=S_k(w)$ for all $k\ge i$. But this is clear because, from the above discussion,  if $i=j$ and $\Gamma_{i,j}^+(v)\ne\emptyset$, then there exists $\alpha$ such that the sequence representations of $v$ and $w$ are of the form $v=v_{1}\cdots v_i\alpha\cdots\alpha$, $w=w_{1}\cdots w_i\alpha\cdots\alpha$. 

Finally, let us demonstrate statement (3). Since $S_i^\star(v)\cap S_j^\star(w)\subseteq S_i(v)\cap S_j(w)$ we have $\Gamma_{i,j}^\star(v)\subseteq \Gamma_{i,j}^+(v)$. Let us assume $\Gamma_{i,j}^+(v)\ne\emptyset$, in which case $\Gamma_{i,j}^+(v)=\{w\}$, where $w$ is the unique element of $\Gamma_{i,j}^+(v)$ given in statement (2). Therefore we have $\Gamma_{i,j}^\star(v)\ne\emptyset$ if and only if $\Gamma_{i,j}^\star(v)=\Gamma_{i,j}^+(v)=\{w\}$; if and only if  $S_i^\star(v)\cap S_j^\star(w)\ne\emptyset$. Finally, since $S_i\left(v\right)\cap S_{j}\left(w\right)\ne\emptyset$ if $S_i^\star(v)\cap S_j^\star(w)\ne\emptyset$, we conclude from statement (2) of Propositions~\ref{v2-second} and \ref{v2-second-cas 2} that $S_i^\star(v)\cap S_j^\star(w)\ne\emptyset$ if and only if $w\in\Gamma_{i,j}^+(v)$ and $S_i\left(v\right)\not\subseteq S_{t,j}\left(w\right)$ for $i\le t < j$; that is, we have $w\in\Gamma_{i,j}^\star(v)$ if and only if $w\in\Gamma_{i,j}^+(v)$ and $S_i\left(v\right)\not\subseteq S_{t,j}\left(w\right)$ for $i\le t < j$.
\end{proof}

\begin{lemma}
\label{tech-3-bis}
Let $v\in V$ and, for $0\leq i< D$, let $\Gamma_{i,D}^+(v)=\{w\in S_1(v): \, \, S_i(v)\cap S_D(w)\ne\emptyset\}$. The following statements hold:
\begin{enumerate}
\item If $G=B(d,D)$, or $G=K(d,D)$ and $v_{i+1}=v_D$, then $\Gamma_{i,D}^+(v)=S_1(v)$.
\item If $G=K(d,D)$ and $v_{i+1}\ne v_D$, then $w\in \Gamma_{i,D}^+(v)$ if and only if $w\in S_1(v)$ and $v_{i+1}\ne w_{D}$.
Moreover, $|\Gamma_{i,D}^+(v)|=d-1$.
\end{enumerate}
\end{lemma}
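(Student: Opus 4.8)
The plan is to analyze $\Gamma_{i,D}^+(v)$ directly from the sequence representations, exactly as in the proof of Lemma~\ref{tech-3}, by recalling that any $w\in S_1(v)$ has $w_{[1,D-1]}=v_{[2,D]}$ and only its last symbol $w_D$ is free (subject to $w_D\ne v_D$ in the Kautz case, and, in the Kautz case, also $w_D \ne w_{D-1} = v_D$ which is the same constraint). The condition $S_i(v)\cap S_D(w)\ne\emptyset$ should then be read off from Lemma~\ref{tech-2}: since $i<D$, statement (1) of Lemma~\ref{tech-2} does not apply to $j=D$ when $S_D(w)=V$, but statement (2) does apply when $S_D(w)\ne V$ (the Kautz case), giving $S_i(v)\cap S_D(w)\ne\emptyset$ iff $v_{i+1}\ne w_D$.

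First I would dispose of statement (1). If $G=B(d,D)$ then $S_D(w)=V$ by Lemma~\ref{tech-1}, so $S_i(v)\cap S_D(w)=S_i(v)\ne\emptyset$ for every $w\in S_1(v)$, hence $\Gamma_{i,D}^+(v)=S_1(v)$. If $G=K(d,D)$ and $v_{i+1}=v_D$, then for any $w\in S_1(v)$ we have $w_D\ne w_{D-1}=v_D=v_{i+1}$, so the condition $v_{i+1}\ne w_D$ of Lemma~\ref{tech-2}(2) is automatically satisfied, and again $S_i(v)\subseteq S_D(w)$, so $\Gamma_{i,D}^+(v)=S_1(v)$.

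Next, statement (2): assume $G=K(d,D)$ and $v_{i+1}\ne v_D$. For $w\in S_1(v)$, Lemma~\ref{tech-2}(2) gives $S_i(v)\cap S_D(w)\ne\emptyset$ iff $v_{i+1}\ne w_D$, which is the claimed characterization. For the count $|\Gamma_{i,D}^+(v)|=d-1$: the vertices $w\in S_1(v)$ are in bijection with the allowed values of $w_D$, namely the $d$ symbols of $A\setminus\{v_D\}$ (the alphabet has $d+1$ symbols and $w_D\ne w_{D-1}=v_D$). Among these $d$ choices, exactly one is $w_D=v_{i+1}$ (which is a legitimate choice precisely because $v_{i+1}\ne v_D$), and that one is excluded. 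Hence $|\Gamma_{i,D}^+(v)|=d-1$. I would also remark that the ``Moreover'' clause in fact holds in case (1) as well when $G=K(d,D)$ and $v_{i+1}=v_D$, since then $|S_1(v)|=d$; but in case (1) with $G=B(d,D)$ one has $|\Gamma_{i,D}^+(v)|=|S_1(v)|=d$, so strictly the statement $|\Gamma_{i,D}^+(v)|=d-1$ is attached to case (2) only, and I would keep it there.

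There is no real obstacle here; the only mild subtlety is making sure the appeal to Lemma~\ref{tech-2} is to the correct statement ((2), for $k=i<D=j$ with $S_D(w)\ne V$), and checking that $v_{i+1}\ne v_D$ is exactly what makes $w_D=v_{i+1}$ an admissible — and therefore genuinely excluded — choice, so that the drop from $d$ to $d-1$ is real. I would present the argument in two short paragraphs mirroring the two statements, reusing the sequence-representation bookkeeping already set up for Lemma~\ref{tech-3}.
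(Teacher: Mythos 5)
Your proposal is correct and follows essentially the same route as the paper: statement (1) via $S_D(w)=V$ in the De Bruijn case and the automatic inequality $w_D\ne v_D=v_{i+1}$ in the Kautz case, and statement (2) via statement (2) of Lemma~\ref{tech-2} together with the count $w_D\in A\setminus\{v_D,v_{i+1}\}$ giving $d-1$ choices. No gaps.
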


\begin{proof} 
If $G=B(d,D)$, then $S_D(w)=V$ and hence $S_i(v)\cap S_D(w)=S_i(v)\ne\emptyset$ for all $w\in S_1(v)$. Therefore if $G=B(d,D)$, then $\Gamma_{i,D}^+(v)=S_1(v)$. Now let $G=K(d,D)$. If $w\in S_1(v)$, then $w_{[1,D-1]}=v_{[2,D]}$ and, in particular, $w_{D-1}=v_D$. Thus we have $w_D\ne v_D$, because two consecutive symbols in the sequence representation of the vertices of $K(d,D)$ are different. By statement (2) of Lemma~\ref{tech-2}, we have $S_i(v)\cap S_D(w)\ne\emptyset$  if and only if $v_{i+1}\ne w_{D}$. Therefore if $v_{i+1}=v_D$, then $v_{i+1}\ne w_{D}$ holds for any $w\in S_1(v)$. We conclude that if $G=K(d,D)$ and $v_{i+1}=v_D$, then $\Gamma_{i,D}^+(v)=S_1(v)$. This completes the proof of statement (1). 

Let us demonstrate statement (2). So assume $G=K(d,D)$ and $v_{i+1}\ne v_D$. By the previous considerations, we have $w\in \Gamma_{i,D}^+(v)$ if and only if $w\in S_1(v)$ and $v_{i+1}\ne w_{D}$; if and only if $w\in S_1(v)$ and $w_D\not\in \{v_{i+1}, v_D\}$. Then, since the symbol alphabet has $d+1\ge 3$ symbols, we have $\Gamma_{i,D}^+(v)\ne\emptyset$ and, moreover, $|\Gamma_{i,D}^+(v)|=d-1$. This completes the proof of the lemma.
\end{proof}




We finish this subsection of technical lemmas with the following one dealing with the partition $\{\mathcal{V}_1,\ldots, \mathcal{V}_l\}$ of the vertex set $V$ introduced in Subsection~\ref{application}. 
Notice that if $\sigma$ is a permutation of the symbol alphabet and $\sigma(v)$ is the vertex whose sequence representation is $\sigma(v_1) \sigma(v_2)\ldots \sigma(v_D)$, then $v$ and $\sigma(v)$ belong to a same vertex class $\mathcal{V}_r$ (that is to say, the sequence representations of $v$ and $\sigma(v)$ have an equivalent structure). The proof of the lemma is an immediate consequence of the definitions and of the fact that $\sigma$ is a bijection.

\begin{lemma}
\label{tech-11}
Let $\sigma$ be a permutation of the symbol alphabet and, given $v\in V$, let $\sigma(v)=\sigma(v_1) \sigma(v_2)\ldots \sigma(v_D)$. Then the following statements hold:
\begin{enumerate}
\item $|S_{i}^\star(v)|=|S_{i}^\star(\sigma(v))|$. 
\item  If $w\in S_1(v)$, then $|S_{i}^\star(v)\cap S_{j}^\star(w)|=|S_{i}^\star(\sigma(v)) \cap S_{j}^\star(\sigma(w))|$. 
\end{enumerate}
\end{lemma}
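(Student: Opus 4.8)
The plan is to prove Lemma~\ref{tech-11} by showing that the permutation $\sigma$ acts on the vertex set $V$ as a digraph automorphism of $G$, and then to transport all the relevant distance-layer sets along this automorphism. First I would verify that the map $v\mapsto\sigma(v)$ is a well-defined bijection on $V$: it sends admissible sequences to admissible sequences (in the Kautz case, $v_k\ne v_{k+1}$ iff $\sigma(v_k)\ne\sigma(v_{k+1})$ since $\sigma$ is injective), and it is invertible with inverse induced by $\sigma^{-1}$. Next I would check that it preserves adjacency: $v=v_1\cdots v_D$ is adjacent to $w=v_2\cdots v_Dv_{D+1}$ if and only if $\sigma(v)=\sigma(v_1)\cdots\sigma(v_D)$ is adjacent to $\sigma(w)=\sigma(v_2)\cdots\sigma(v_D)\sigma(v_{D+1})$, again because $\sigma$ acts symbol-wise and the adjacency rule is purely positional. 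Hence $\sigma$ is an automorphism of $G$.

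From here the two statements are immediate. Since automorphisms preserve walk-lengths and distances, $\sigma$ maps $S_i(v)$ bijectively onto $S_i(\sigma(v))$, and therefore, using the characterization $S_i^\star(v)=S_i(v)\setminus\bigcup_{k=0}^{i-1}S_k(v)$ from~\eqref{layer-previ}, it maps $S_i^\star(v)$ bijectively onto $S_i^\star(\sigma(v))$; taking cardinalities gives statement (1). For statement (2), if $w\in S_1(v)$ then $\sigma(w)\in S_1(\sigma(v))$, and since $\sigma$ restricts to a bijection $S_i^\star(v)\to S_i^\star(\sigma(v))$ and also to a bijection $S_j^\star(w)\to S_j^\star(\sigma(w))$, it carries the intersection $S_i^\star(v)\cap S_j^\star(w)$ bijectively onto $S_i^\star(\sigma(v))\cap S_j^\star(\sigma(w))$, whence the equality of cardinalities.

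There is really no substantive obstacle here; the statement is, as the paragraph preceding the lemma already notes, an immediate consequence of the definitions together with the observation that $\sigma$ is a bijection. The only thing to be slightly careful about is that $\sigma$ is a permutation of the \emph{full} symbol alphabet $A$ (of size $d$ for $B(d,D)$, size $d+1$ for $K(d,D)$), so that the image of an admissible sequence is again an admissible sequence using symbols from the same alphabet; this is what makes $v\mapsto\sigma(v)$ a self-map of $V$ rather than a map into some other digraph. One could also phrase the argument without invoking the word ``automorphism'': simply note that $u\in S_i(v)$ iff the last $D-i$ symbols of $v$ agree positionwise with the first $D-i$ symbols of $u$ (the walk-existence criterion recalled in Subsection~\ref{distance-layer}), and this condition is visibly invariant under applying $\sigma$ to both $v$ and $u$; then feed this through~\eqref{layer-previ} as above.
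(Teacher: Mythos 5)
Your proof is correct and follows exactly the route the paper intends: the paper itself only remarks that the lemma ``is an immediate consequence of the definitions and of the fact that $\sigma$ is a bijection,'' and your write-up simply makes explicit that $\sigma$ induces a digraph automorphism preserving adjacency, walks, and hence the layer sets. No gaps; you have just supplied the details the authors omitted.
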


\subsection{Proof of Theorem~\ref{p_in}}
\label{proof p_in}
Let $v$ be a vertex selected uniformly at random from the vertex set $V$. By definition, the input probability $\P_{\mathsf{in}}(i)$ is the probability of selecting uniformly at random from $V\setminus\{v\}$ a vertex $v'$ which is at distance $i$ from $v$. For a fixed $v$, the probability of selecting such a $v'$ is clearly $\P_{\mathsf{in}}(i\mid v)=|S_{i}^\star\left(v\right)|/{(|V|-1)}$. Moreover, by Lemma~\ref{tech-11}, this probability is the same for any vertex $v\in \mathcal{V}_r$ in a same vertex class $\mathcal{V}_r$, $1\le r\le l$. Moreover, since $v$ is chosen uniformly at random from $V$, we have $\P(v\in \mathcal{V}_r)= {|\mathcal{V}_r|}/{|V|}$. Thus, for any choice of $v^{(r)}\in\mathcal{V}_r$, the input probability $\P_{\mathsf{in}}(i)$ can be expressed as 
\begin{equation*}
	\P_{\mathsf{in}}(i)=\sum_r \P_{\mathsf{in}}\left(i\mid v^{(r)}\in \mathcal{V}_r\right)\, \P\left(v^{(r)}\in \mathcal{V}_r\right)
	= \sum_{r=1}^l \frac{\left|S_{i}^\star\left(v^{(r)}\right)\right|}{(|V|-1)}\cdot\frac{|\mathcal{V}_r|}{|V|}.
\end{equation*}
The proof is completed by using  the polynomial description of $\left|S_{i}^\star\left(v^{(r)}\right)\right|$ given in Proposition~\ref{propo 2.3} and Remark~\ref{rem_S_kj-seq}. Finally, notice that $\P_{\mathsf{\mathsf{in}}}(i)=\Theta\left(1/d^{D-i}\right)$, because $|S_{i}^\star\left(v\right)|=\Theta\left(d^i\right)$ (independently of $v$) and $|V|=\Theta\left(d^D\right)$.

\subsection{Proof of Theorem~\ref{p_ij}}
\label{proof p_ij}
Let $v$ be the vertex at which deflection occurs and suppose that the destination vertex $z$ is at distance $i$ from $v$.  Let $w\in S_1(v)$ be the vertex through which deflection takes place. In other words, we are supposing that a packet circulating within the network (which has to arrive to  $z$) is currently in $v$ and cannot proceed through the shortest path from $v$ to $z$;  and hence it is deflected to vertex $w$. 

Hence the probability that the new distance from $w$ to the destination vertex $z$ is $j$, given that a deflection occurs in $v$ and that the deflection takes place through $w$, is just the probability that, conditional on the event $z\in S_i^\star\left(v\right)$,  the destination vertex $z$ belongs to $S_i^\star\left(v\right)\cap S_{j}^\star\left(w\right)$. In this way, denoting this conditional probability as $\P_{\mathsf{t}}\left(i,j \mid v, w\right)$, we have
\begin{equation*}
\label{defl-1}
\P_{\mathsf{t}}\left(i,j\mid v, w\right)=
\frac{\left|S_i^\star\left(v\right)\cap S_{j}^\star\left(w\right)\right|}{\left|S_i^\star\left(v\right)\right|}.
\end{equation*}
It follows that $\P_{\mathsf{t}}\left(i,j \mid v, w\right)\ne 0$  if and only if $S^\star_i\left(v\right)\cap S^\star_j(w)\ne\emptyset$. 

Let $w'_{v,z}$ be the vertex adjacent from $v$ in the unique shortest path from $v$ to $z$. The vertex $w$ through which deflection takes place is selected uniformly at random from $S_1(v)\setminus \{w'_{v,z}\}$. Hence the probability $\P\left(w\mid v\right)$ that, given that deflection occurs, it takes place through  $w\in S_1(v)$ can be calculated as
$$
\P(w\mid v)=\sum_{z\in S_i^\star(v)} \P(w\mid v, z)\P(z\mid v),
$$
where $\P(w\mid v, z)=0$ if $w=w'_{v,z}$ and $\P(w\mid v, z)=1/(d-1)$ if $w\ne w'_{v,z}$. Moreover, $w=w'_{v,z}$ if and only if $z\in S_i^\star(v)\cap S^\star_{i-1}(w)$.
Therefore, $\P(w\mid v, z)=0$ if and only if $z\in S_i^\star(v)\cap S^\star_{i-1}(w)$. Furthermore, the probability that the destination vertex is a given vertex $z$ belonging to $S_i^\star(v)$ is simply
$$
\P(z\mid v)=\frac{1}{|S_i^\star(v)|}.
$$
Then, since $|S_i^\star(v)|-|S_i^\star(v)\cap S_{i-1}^\star(w)|$ is the number of vertices $z\in S_i^\star(v)$ for which $w\ne w'_{v,z}$, we have
\begin{align*}
\P(w\mid v) &=\frac{1}{|S_i^\star(v)|}\sum_{z\in S_i^\star(v)} \P(w\mid v, z)\\
            &=\frac{1}{|S_i^\star(v)|}\frac{|S_i^\star(v)|-|S_i^\star(v)\cap S_{i-1}^\star(w)|}{d-1}=\frac{1}{d-1}\left(1-\frac{|S_i^\star(v)\cap S_{i-1}^\star(w)|}{|S_i^\star(v)|}\right).
\end{align*}

Let $\Gamma_{i,j}^\star(v)=\{w\in S_1(v): \, S_i^\star(v)\cap S_j^\star(w)\ne\emptyset\}$ be the set defined in statement (3) of Lemma~\ref{tech-3}. Clearly, we have $\P_{\mathsf{t}}\left(i,j \mid v, w\right)\ne 0$ if and only if $w\in\Gamma_{i,j}^\star(v)$. Moreover, it is proved in Lemma~\ref{tech-3} that if $v=v_1v_2\cdots v_D$ and $\Gamma_{i,j}^\star(v)\ne\emptyset$, then $\Gamma_{i,j}^\star(v)$ contains a  single vertex $w_{v}$ which sequence representation is uniquely determined from $v$, $i$ and $j$, namely 
\begin{equation}
\label{determined}
w_v=v_2\cdots v_{D}v_{i+(D-j)}.
\end{equation}

Taking all these considerations into account we can express the transition probability that the new distance to the destination is $j$, conditional on the event that deflection occurs at $v$, as
\begin{align}
\label{cind}
\nonumber 
\P_{\mathsf{t}}\left(i, j \mid v\right)&=\sum_{w\in \Gamma_{i,j}^\star\left(v\right)} \P_{\mathsf{t}}\left(i, j \mid  v, w\right)\cdot \P\left(w\mid v \right)=\P_{\mathsf{t}}\left(i, j \mid v, w_v\right)\P(w_v\mid v)\\
&=\frac{1}{d-1}\cdot\frac{\left|S_i^\star\left(v\right)\cap S_{j}^\star\left(w_v\right)\right|}{\left|S_i^\star\left(v\right)\right|}\left(1-\frac{|S_i^\star(v)\cap S_{i-1}^\star(w_v)|}{|S_i^\star(v)|}\right),
\end{align}
if $\Gamma_{i,j}^\star\left(v\right)\ne\emptyset$; and $\P_{\mathsf{t}}\left(i, j \mid v\right)=0$ otherwise. 

Furthermore, if $\sigma$ is a permutation of the symbol alphabet $A$, then, using the notation introduced in Lemma~\ref{tech-11}, we can check that $\Gamma_{i,j}^\star\left(\sigma(v)\right)\ne\emptyset$ if and only if $\Gamma_{i,j}^\star\left(v\right)\ne\emptyset$, and that if $\Gamma_{i,j}^\star\left(\sigma(v)\right)\ne\emptyset$, then $\Gamma_{i,j}^\star\left(\sigma(v)\right)=\{\sigma\left(w_v\right)\}$. Moreover, $\sigma(w'_{v,z})=w'_{\sigma(v),\sigma(z)}$ is the vertex adjacent from $\sigma(v)$ in the shortest path to $\sigma(z)$. This facts, together with the statements of Lemma~\ref{tech-11}, imply that the probability calculated in (\ref{cind}) is the same for any vertex $v^{(r)}$ in a given vertex class $\mathcal{V}_r$. (Recall that $\mathcal{V}_r$ is the class of vertices to which $v^{(r)}$ belongs according to the structure of its sequence representation.) 

Now, by adding for all the classes $\mathcal{V}_r$ and taking into account that $\P\left(v^{(r)}\in \mathcal{V}_r\right)={|\mathcal{V}_r|}/{|V|}$ we obtain the transition probability $\P_{\mathsf{t}}(i,j)$ that, conditional on the event that the deflection occurs in a vertex which is at distance $i$ to the destination vertex, the new distance to this destination is $j$. In this way, by setting  $w^{(r)}=w_{v^{(r)}}$  we have
\begin{eqnarray*}
\lefteqn{\P_{\mathsf{t}}(i,j) =\sum_{r} \P_{\mathsf{t}}\left(i,j \mid v^{(r)}\in \mathcal{V}_r\right)\, \P\left(v^{(r)}\in \mathcal{V}_r\right)}\\
&&=\frac{1}{(d-1) |V|}\sum_{r} |\mathcal{V}_r|\frac{\left|S_i^\star\left(v^{(r)}\right)\cap S_{j}^\star\left(w^{(r)}\right)\right|}{\left|S_i^\star\left(v^{(r)}\right)\right|}\left(1-\frac{|S_i^\star\left(v^{(r)}\right)\cap S_{i-1}^\star\left(w^{(r)}\right)|}{|S_i^\star\left(v^{(r)}\right)|}\right).
\end{eqnarray*}
Furthermore, if the intersection $S_i^\star\left(v^{(r)}\right)\cap S_{j}^\star\left(w^{(r)}\right)$ is nonempty, we conclude from Theorems~\ref{propo 2.9-v2-cas 1} and \ref{propo 2.9-v2-cas 2} that  $\left|S_i^\star\left(v^{(r)}\right)\cap S_{j}^\star\left(w^{(r)}\right)\right|$ has a polynomial expression given by $d^i-\alpha^{(r,i)}_{i-1}d^{i-1}-\cdots -\alpha^{(r,i)}_{1} d-\alpha^{(r,i)}_{0}$. Moreover, by Theorem~\ref{propo 2.9-v2-cas 1}, if $S_i^\star\left(v^{(r)}\right)\cap S_{i-1}^\star\left(w^{(r)}\right)\ne\emptyset$, then $\left|S_i^\star\left(v^{(r)}\right)\cap S_{i-1}^\star\left(w^{(r)}\right)\right|$ has also a polynomial expression of the form $d^{i-1}-b^{(r,i)}_{i-2}d^{i-2}-\ldots -b^{(r,i)}_{1} d-b^{(r,i)}_{0}$. Therefore, by taking also into account Proposition~\ref{propo 2.3}, we have
\begin{equation*}
\P_{\mathsf{t}}(i,j) = \frac{1}{(d-1)|V|}\sum_{r} |\mathcal{V}_r|\, p^{(r,i,j)}\left(1-q^{(r,i)}\right),
\end{equation*}
where $p^{(r,i,j)}$ and $q^{(r,i)}$ are rational fractions in  $d$ of the form
$$
p^{(r,i,j)}=k^{(r,i,j)}\cdot \frac{\, d^i-\alpha^{(r,i)}_{i-1}d^{i-1}-\cdots -\alpha^{(r,i)}_{1} d-\alpha^{(r,i)}_{0}  }{d^i-a^{(r,i)}_{i-1}d^{i-1}-\cdots -a^{(r,i)}_{1} d-a^{(r,i)}_{0}}
$$
and
$$
q^{(r,i)}=\kappa^{(r,i)}\cdot \frac{d^{i-1}-b^{(r,i)}_{i-2}d^{i-2}-\ldots -b^{(r,i)}_{1} d-b^{(r,i)}_{0}}{d^i-a^{(r,i)}_{i-1}d^{i-1}-\cdots -a^{(r,i)}_{1} d-a^{(r,i)}_{0}},
$$
and $k^{(r,i,j)}, \kappa^{(r,i)}\in\{0,1\}$. Furthermore, we have  $k^{(r,i,j)}=1$ if and only if $S_i^\star\left(v^{(r)}\right)\cap S_{j}^\star\left(w^{(r)}\right)\ne\emptyset$, as determined by statement (2) of Propositions~\ref{v2-second} and \ref{v2-second-cas 2}; and we have $\kappa^{(r,i)}=1$ if and only if $S_i^\star\left(v^{(r)}\right)\cap S_{i-1}^\star\left(w^{(r)}\right)\ne\emptyset$, as determined by statement (3) of Propositions~\ref{v2-second} and \ref{v2-second-cas 2}. 

Finally, observe that the coefficients $a^{(r,i)}_{k}$ are determined from $v^{(r)}$ by using Proposition~\ref{propo 2.3}, and the coefficients   $\alpha^{(r,i)}_{k},  b^{(r,i)}_{k}\in\{0,1\}$ are determined from $v^{(r)}$ and $w^{(r)}$ by using Theorems~\ref{propo 2.9-v2-cas 1} and \ref{propo 2.9-v2-cas 2}. So we conclude that:
\begin{enumerate}
\item  $a^{(r,i)}_{k}\in\{0,1\}$; 
\item  $\alpha^{(r,i)}_{i-1}\in\{0,1,2\}$, and $\alpha^{(r,i)}_{k},  b^{(r,i)}_{k}\in\{0,1\}$ for $0\le k\le i-2$.
\end{enumerate}
This completes the proof of Theorem~\ref{p_ij}.


\subsection{Proof of Theorem~\ref{p_iD}}
\label{proof p_iD}
{
We use the same notation and an analysis similar to that in the proof of Theorem~\ref{p_ij}. The probability that the new distance from $w$ to the destination vertex is $D$, given that a deflection occurs in $v$ (which is at distance $i$ to the destination) and that the deflection takes place through $w\in S_1(v)$ is 
\begin{equation*}
\label{defl-iD}
\P_{\mathsf{t}}\left(i,D\mid v, w\right)=
\frac{\left|S_i^\star\left(v\right)\cap S_{D}^\star\left(w\right)\right|}{\left|S_i^\star\left(v\right)\right|}.
\end{equation*}
Let $\Gamma_{i,D}^+(v)=\{w\in S_1(v) \, : \, \, S_i(v)\cap S_D(w)\ne\emptyset\}$ be the set defined in Lemma~\ref{tech-3-bis}. Clearly, if $w\in S_1\left(v\right)\setminus\Gamma_{i,D}^+\left(v\right)$, then for such a vertex $w$ we have $\P_{\mathsf{t}}\left(i,D \mid v, w\right)=0$. In Lemma~\ref{tech-3-bis} it is proved that $\Gamma_{i,D}^+\left(v\right)$ is always nonempty. Indeed, if $G=B(d,D)$, or $G=K(d,D)$ and $v_{i+1}=v_D$, then $\Gamma_{i,D}^+(v)=S_1(v)$; whereas if $G=K(d,D)$ and $v_{i+1}\ne v_D$, then $\Gamma_{i,D}^+(v)=\{w\in S_1(v) \, :\,  w=v_2\cdots v_D w_D,\ w_D\ne v_{i+1}, v_D\}$, and hence $|\Gamma_{i,D}^+(v)|=d-1$. Therefore, the transition probability that the new distance to the destination is $D$, given the event that deflection occurs at $v$, can be expressed as in (\ref{cind}); that is,
\begin{align}
\label{cind-2}
\nonumber 
\P_{\mathsf{t}}\left(i, D \mid v\right)&=\sum_{w\in \Gamma_{i,D}^+(v)} \P_{\mathsf{t}}\left(i, D \mid  v, w\right)\cdot \P\left(w\mid v \right)\\
&=\frac{1}{d-1}\sum_{s=1}^m \frac{\left|S_i^\star\left(v\right)\cap S_{D}^\star\left(w_{v,s}\right)\right|}{\left|S_i^\star\left(v\right)\right|}\left(1-\frac{|S_i^\star(v)\cap S_{i-1}^\star(w_{v,s})|}{|S_i^\star(v)|}\right),
\end{align}
where $m=d-1$ if $G=K(d,D)$ and $v_{i+1}\ne v_D$ and $m=d$ otherwise; and $w_{v,s}$, $1\le s\le m$, are the vertices belonging to $\Gamma_{i,D}^+\left(v\right)$. 

Furthermore, if $\sigma$ is a permutation of the symbol alphabet $A$, then the elements of $\Gamma_{i,j}^+\left(\sigma(v)\right)$ are $w_{\sigma(v),s}=\sigma\left(w_{v,s}\right)$, $1\le s\le m$. Hence, by taking into account Lemma~\ref{tech-11}, we conclude that the probability (\ref{cind-2}) is the same for any vertex $v^{(r)}$ in a given vertex class $\mathcal{V}_r$. By adding for all the classes $\mathcal{V}_r$ and taking into account that $\P\left(v^{(r)}\in \mathcal{V}_r\right)={|\mathcal{V}_r|}/{|V|}$ we obtain the transition probability $\P_{\mathsf{t}}(i,D)$ that, conditional on the event that the deflection occurs in a vertex which is at distance $i$ to the destination vertex, the new distance to this destination is $D$. In this way, by setting  $w^{(r,s)}=w_{v^{(r)},s}$, $1\le s\le m$, we have
\begin{align*}
\P_{\mathsf{t}}(i,D) &=\sum_{r} \P_{\mathsf{t}}\left(i,D \mid v^{(r)}\in \mathcal{V}_r\right)\, \P\left(v^{(r)}\in \mathcal{V}_r\right)\\
&=\frac{1}{(d-1) |V|}\sum_{r}\sum_{s=1}^m |\mathcal{V}_r|\,\frac{\left|S_i^\star\left(v^{(r)}\right)\cap S_{D}^\star\left(w^{(r,s)}\right)\right|}{\left|S_i^\star\left(v^{(r)}\right)\right|}\cdot\left(1-\frac{|S_i^\star\left(v^{(r)}\right)\cap S_{i-1}^\star\left(w^{(r,s)}\right)|}{|S_i^\star\left(v^{(r)}\right)|}\right),
\end{align*}
Moreover, by considering the polynomial expressions of  $\left|S_i^\star\left(v^{(r)}\right)\cap S_{D}^\star\left(w^{(r,s)}\right)\right|$ and $\left|S_i^\star\left(v^{(r)}\right)\cap S_{i-1}^\star\left(w^{(r,s)}\right)\right|$, we have 
\begin{equation*}
\P_{\mathsf{t}}(i,D) = \frac{1}{(d-1) |V|}\sum_{r}\sum_{s=1}^{m_r}|\mathcal{V}_r|\, p^{(r,s,i)}\left(1-q^{(r,s,i)}\right),
\end{equation*}}
where $m_r\in\{d-1,d\}$ if $G=K(d,D)$ and $m_r=d$ if $G=B(d,D)$, and where $p^{(r,s,i)}$ and $q^{(r,s,i)}$ are rational fractions in the degree $d$ of the form
$$
p^{(r,s,i)}=k^{(r,s,i)}\cdot \frac{\, d^i-\alpha^{(r,s,i)}_{i-1}d^{i-1}-\cdots -\alpha^{(r,s,i)}_{1} d-\alpha^{(r,s,i)}_{0}  }{d^i-a^{(r,s,i)}_{i-1}d^{i-1}-\cdots -a^{(r,s,i)}_{1} d-a^{(r,s,i)}_{0}}
$$
and
$$
q^{(r,s,i)}=\kappa^{(r,s,i)}\cdot \frac{d^{i-1}-b^{(r,s,i)}_{i-2}d^{i-2}-\ldots -b^{(r,s,i)}_{1} d-b^{(r,s,i)}_{0}}{d^i-a^{(r,s,i)}_{i-1}d^{i-1}-\cdots -a^{(r,s,i)}_{1} d-a^{(r,s,i)}_{0}},
$$
and $k^{(r,s,i)}, \kappa^{(r,s,i)}\in\{0,1\}$. More precisely,   $k^{(r,s,i)}=1$ if and only if $S_i^\star\left(v^{(r)}\right)\cap S_{D}^\star\left(w^{(r,s)}\right)\ne\emptyset$, as determined by statement (2) of Propositions~\ref{v2-second} and \ref{v2-second-cas 2},  and  $\kappa^{(r,s,i)}=1$ if and only if $S_i^\star\left(v^{(r)}\right)\cap S_{i-1}^\star\left(w^{(r,s)}\right)\ne\emptyset$, as determined by statement (3) of Propositions~\ref{v2-second} and \ref{v2-second-cas 2}. 
 
As in Theorem~\ref{p_ij}, the coefficients $a^{(r,s,i)}_{k}$, $\alpha^{(r,s,i)}_{k}$, $b^{(r,s,i)}_{k}$ are determined from $v^{(r)}$ and $w^{(r,s)}$ by using Proposition~\ref{propo 2.3} and Theorems~\ref{propo 2.9-v2-cas 1} and \ref{propo 2.9-v2-cas 2}. Furthermore, $a^{(r,s,i)}_{k}\in\{0,1\}$, $\alpha^{(r,s,i)}_{i-1}\in\{0,1,2\}$, and $\alpha^{(r,s,i)}_{k},  b^{(r,s, i)}_{k}\in\{0,1\}$ for $0\le k\le i-2$.

\setcounter{subsection}{0}
\section{Final remarks}


The digraphs $B(d,D)$ and $K(d,D)$ are fundamental examples of digraphs on alphabet \cite{gfy92} as well as iterated line digraphs \cite{Da17,fya-23.1}. Indeed, in the line digraph $L(G_0)$ of a digraph $G_0$ each vertex represents an arc $(x,y)$ of $G_0$; and a vertex $(x,y)$ is adjacent to a vertex $(z,t)$ if and only if $y=z$.  For any $k>1$, the $k$-iterated line digraph, $L^k(G_0)$, is defined recursively by $L^k(G_0)=L(L^{k-1}(G_0))$ (see for instance \cite{fya-23.1}). In particular, if $G_0$ is the complete symmetric digraph on $d$ vertices with a loop in each vertex, then $B(d,D)=L^{D-1}(G_0)$; and if $G_0$ is the  complete symmetric digraph on $d+1$ vertices without loops, then $K(d,D)=L^{D-1}(G_0)$. Other used network topologies correspond to iterated line digraphs as, for instance, the generalized De Bruijn cycles \cite{DeBruinj_gen_cycle}. So, we point out that an analysis of the distance-layer structure (and hence the evaluation of the efficiency of deflection routing in the corresponding network topology), similar to the one presented in this paper, could be done in other families of digraphs on alphabet or of iterated line digraphs.

\end{document}